\def\@cite#1#2{{\m@th\upshape\bfseries%
[{#1\if@tempswa{\m@th\upshape\mdseries, #2}\fi}]}}
\theoremstyle{plain}
\newtheorem{thm}{Theorem}[section]
\newtheorem{cor}[thm]{Corollary}
\newtheorem{prop}[thm]{Proposition}
\newtheorem{lem}[thm]{Lemma}
\theoremstyle{definition}
\newtheorem{defn}[thm]{Definition}
\newtheorem{ex}[thm]{Example}
\theoremstyle{remark}
\newtheorem{rem}[thm]{Remark}
\numberwithin{equation}{subsection}
\renewcommand{\bold}[1]{\medskip \noindent {\bf #1 }\nopagebreak}
\newcommand{\nc}{\newcommand}
\newcommand{\rnc}{\renewcommand}
\newcommand{\e}{\varepsilon}
\nc\bA{\mathbb{A}}
\nc\bB{\mathbb{B}}
\nc\bC{\mathbb{C}}
\nc\bD{\mathbb{D}}
\nc\bE{\mathbb{E}}
\nc\bF{\mathbb{F}}
\nc\bG{\mathbb{G}}
\nc\bH{\mathbb{H}}
\nc\bI{\mathbb{I}}
\nc{\bJ}{\mathbb{J}} 
\nc\bK{\mathbb{K}}
\nc\bL{\mathbb{L}}
\nc\bM{\mathbb{M}}
\nc\bN{\mathbb{N}}
\nc\bO{\mathbb{O}}
\nc\bP{\mathbb{P}}
\nc\bQ{\mathbb{Q}}
\nc\bR{\mathbb{R}}
\nc\bS{\mathbb{S}}
\nc\bT{\mathbb{T}}
\nc\bU{\mathbb{U}}
\nc\bV{\mathbb{V}}
\nc\bW{\mathbb{W}}
\nc\bY{\mathbb{Y}}
\nc\bX{\mathbb{X}}
\nc\bZ{\mathbb{Z}}
\nc\cA{\mathcal{A}}
\nc\cB{\mathcal{B}}
\nc\cC{\mathcal{C}}
\rnc\cD{\mathcal{D}}
\nc\cE{\mathcal{E}}
\nc\cF{\mathcal{F}}
\nc\cG{\mathcal{G}}
\rnc\cH{\mathcal{H}}
\nc\cI{\mathcal{I}}
\nc{\cJ}{\mathcal{J}} 
\nc\cK{\mathcal{K}}
\rnc\cL{\mathcal{L}}
\nc\cM{\mathcal{M}}
\nc\cN{\mathcal{N}}
\nc\cO{\mathcal{O}}
\nc\cP{\mathcal{P}}
\nc\cQ{\mathcal{Q}}
\rnc\cR{\mathcal{R}}
\nc\cS{\mathcal{S}}
\nc\cT{\mathcal{T}}
\nc\cU{\mathcal{U}}
\nc\cV{\mathcal{V}}
\nc\cW{\mathcal{W}}
\nc\cY{\mathcal{Y}}
\nc\cX{\mathcal{X}}
\nc\cZ{\mathcal{Z}}
\nc\boldkappa{\boldsymbol\kappa}
\newcommand{\bk}{{\mathbf{k}}}
\nc{\dmo}{\DeclareMathOperator}
\rnc{\Re}{\operatorname{Re}}
\rnc{\Im}{\operatorname{Im}}
\nc{\hyp}{\operatorname{hyp}}
\nc{\even}{\operatorname{even}}
\nc{\odd}{\operatorname{odd}}
\rnc{\span}{\operatorname{span}}
\dmo{\rank}{rank}
\dmo{\End}{End}
\dmo{\Hom}{Hom}
\dmo{\Jac}{Jac}
\dmo{\Id}{Id}
\dmo{\Aut}{Aut}
\dmo{\CP}{\bC P^1}
\dmo{\Ann}{\operatorname{Ann}}
\nc{\Sim}{{\mathop{}\!\sim}}
\nc{\BM}{{\overline{\mathcal M}}}
\nc{\BHH}{{\overline{\mathcal H}}}
\nc{\WHH}{{\widetilde{\mathcal H}}}
\nc{\HH}{{\mathcal H}}
\nc{\bbZ}{{\mathbb Z}}
\title{The WYSIWYG compactification}
\author{Dawei~Chen}
\address{Department of Mathematics, Boston College, Chestnut Hill, MA 02467, USA}
\email{dawei.chen@bc.edu}
\author{Alex~Wright}
\address{Department of Mathematics, University of Michigan, Ann Arbor, MI 48109, USA}
\email{alexmw@umich.edu}
\subjclass[2010]{14H10, 14H15, 30F30, 32G15, 32G20}
\begin{document}
\maketitle
\thispagestyle{empty}

\begin{abstract}
We show that the partial compactification of a stratum of Abelian differentials previously considered by  Mirzakhani and Wright is not an algebraic variety. Despite this, we use a combination of algebro-geometric and other methods to provide a short, unconditional proof of  Mirzakhani and Wright's formula for the tangent space to the boundary of a $GL^{+}(2,\bR)$ orbit closure, and give new results on the structure of the boundary.
\end{abstract}

\setcounter{tocdepth}{1} 
\tableofcontents

\section{Introduction}\label{S:intro}

\bold{WYSIWYG.} Let $\kappa$ be a multiset of $n$ non-negative integers that sum to $2g-2$. Define the stratum $\cH(\kappa)$ to be the space of genus $g$ Riemann surfaces $X$ with $n$-unordered marked points and an Abelian differential $\omega\in H^{1,0}(X)$ that vanishes at the marked points with orders of vanishing given exactly by $\kappa$. The multiset $\kappa$ may contain $0$ with some multiplicity, which corresponds to marked points where the differential does not vanish, or ``zeros of order zero". (We mark all the zeros of $\omega$ so that the true zeros can be treated on an equal footing with the ``zeros of order zero".) The stratum is contained in the Hodge bundle over the moduli space $\cM_{g,n}$ of genus $g$ Riemann surfaces with $n$ unordered marked points. 

Let $\BHH(\kappa)$ be the closure of $\cH(\kappa)$ in the Hodge bundle over the Deligne-Mumford moduli space $\BM_{g,n}$ of stable genus $g$ Riemann surfaces. 

For $(X,\omega)\in \BHH(\kappa)$, define $\pi(X,\omega)$ to be the result of deleting each component of $X$ on which $\omega$ is identically zero, deleting each node, and filling in any punctures thus created with marked points. Thus $\pi(X, \omega)$ is a union of Riemann surfaces with marked points, each of which has a non-zero Abelian differential which is holomorphic away from the marked points and has at worst simple poles at the marked points. Notice that $(X,\omega)$ and $\pi(X, \omega)$ have finite area under the flat metric induced by $\omega$ if and only if $\omega$ has no simple pole at any node of $X$. 

Define an equivalence relation on $\BHH(\kappa)$ by $(X,\omega) \sim (X', \omega')$ if $\pi(X,\omega)=\pi(X', \omega')$, and define the What You See Is What You Get partial compactification $\WHH(\kappa)$ to be $\BHH(\kappa)/{\sim}$. This comes equipped with the quotient  map $\pi\colon \BHH(\kappa) \to \WHH(\kappa)$.

We also define the projectivizations $\bP \BHH(\kappa)$ and $\bP \WHH(\kappa)$ of these spaces to be the quotient by the $\bC^*$ action that rescales the differential. 

The WYSIWYG compactification is so named because, when an Abelian differential is presented via polygons in the plane, and when these polygons degenerate in a reasonable way, the limit will be given by the resulting polygons; see \cite[Definition 2.2, Proposition 9.8]{MirWri}. 

\bold{A non-algebraic, non-analytic space.} Our first result cements the impression that the WYSIWYG compactification is a creation of flat geometry rather than algebraic geometry; it is in fact not an algebraic variety at all. For simplicity we restrict to the case where there are no zeros of order zero. 

\begin{thm}\label{T:notalg}
For $g\geq 3$ and every $\kappa$ consisting of positive integers, as well as $g=2$ and $\kappa = (1,1)$, $\bP\WHH(\kappa)$ does not admit the structure of an algebraic variety in such a way that $\pi\colon \bP\BHH(\kappa)\to \bP\WHH(\kappa)$ is a morphism of algebraic varieties. Moreover, $\bP\WHH(\kappa)$ is not even a complex analytic space in such a way that $\pi$ is a morphism of complex analytic spaces. When $\bP\cH(\kappa)$  is disconnected, the same is true for each connected component. 
\end{thm}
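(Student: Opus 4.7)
My plan is to argue by contradiction: suppose $\bP\WHH(\kappa)$ admits a complex analytic structure making $\pi$ a morphism of analytic spaces, and derive a local obstruction from a boundary point whose $\pi$-fiber is positive-dimensional. The algebraic assertion follows at once, since algebraic morphisms are analytic.

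First, I would produce a boundary point $[X_0,\omega_0] \in \bP\BHH(\kappa)$ with positive-dimensional $\pi$-fiber. Choose $X_0 = Y \cup Z$, where $Y$ is the main component carrying $\omega_0|_Y$ with prescribed zeros of orders $\kappa$, and $Z$ is a smooth ``zero component'' of genus $g' \ge 1$ meeting $Y$ at a single node $q$, with $\omega_0|_Z \equiv 0$. The residue of $\omega_0|_Y$ at $q$ is then forced to vanish, and the hypothesis on $(g,\kappa)$ is what guarantees the existence of such a configuration in $\BHH(\kappa)$ (and rules out the case $(g,\kappa) = (2,(2))$). The fiber of $\pi$ through $[X_0,\omega_0]$ is identified, up to finite cover, with $\cM_{g',1}$, a space of positive complex dimension.

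Next, using plumbing, I would set up local analytic coordinates $(\mu,\zeta,t)$ on $\bP\BHH(\kappa)$ near $[X_0,\omega_0]$, where $\mu$ parametrizes the main part $(Y,\omega_0|_Y,q)$, $\zeta$ parametrizes $(Z,q)\in\cM_{g',1}$, and $t$ is a plumbing parameter at $q$. Locally, $\sim$ is generated by $(\mu,\zeta,t)\sim(\mu,\zeta',t)$. The crucial geometric feature is that for $t\neq 0$, the smoothed surface $X_{\mu,\zeta,t}$ is smooth, so $\pi$ is the identity and distinct $\zeta$'s yield distinct smoothings, hence distinct points of $\cH(\kappa) \subset \bP\WHH(\kappa)$; whereas for $t=0$, $\pi$ collapses the entire $\zeta$-direction to a single point.

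The heart of the argument, and the main obstacle, is to show that no analytic atlas on $\bP\WHH(\kappa)$ can accommodate both features. If $\pi$ were analytic, any holomorphic function $f$ on a neighborhood of $\pi[X_0,\omega_0]$ pulls back to $F(\mu,\zeta,t) = \pi^*f$ with $F(\mu,\zeta,0)$ independent of $\zeta$; writing $F = F(\mu,0,0) + t G(\mu,\zeta,t)$ and using that a finite tuple of such $F$'s must separate nearby points for $t\ne 0$, one deduces that the leading coefficient $G(\mu,\zeta,0)$ must give a holomorphic injection $\cM_{g',1} \hookrightarrow \bC^N$ depending analytically on $\mu$. The final contradiction comes from observing that the plumbing parameter $t$ is only defined up to a factor $\lambda(\mu)\rho(\zeta)$, reflecting independent choices of local uniformizers on the two sides of $q$; under this ambiguity, the function $G(\mu,\zeta,0)$ transforms inhomogeneously in $\zeta$, so no analytic choice of ``smoothing direction'' on $\bP\WHH(\kappa)$ can be consistent with the pointwise behavior of $\pi$, preventing any such holomorphic atlas from existing. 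The disconnected case follows by restricting the argument to a single component of $\bP\cH(\kappa)$ containing a boundary configuration of the required form.
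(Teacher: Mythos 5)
There are two genuine gaps here. The first is that your boundary configuration does not exist in $\BHH(\kappa)$. If all the zeros of orders $\kappa$ lie on $Y$ and $Z$ meets $Y$ at a single node $q$, then $\omega_0|_Y$ is a section of $K_Y(q)$, whose degree is $2g_Y-1=2(g-g')-1<2g-2=\sum\kappa_i$ whenever $g'\geq 1$; so $\omega_0|_Y$ would have to vanish identically. The zeros must be distributed among the components (in the paper's construction they in fact sit on a \emph{collapsed} component $C$), and which such configurations actually lie in the closure $\BHH(\kappa)$ is governed by the global residue condition of \cite{Many}. This is where the hypothesis on $(g,\kappa)$ really enters: one needs a meromorphic differential with zeros of orders $\kappa$ and two double poles with vanishing residues, which exists precisely outside the excluded case $\kappa=(2)$ in genus two. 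Your explanation of why $(2,(2))$ is excluded does not match the actual mechanism, and you have not verified existence for the allowed cases either.

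The second and more serious gap is that the ``heart of the argument'' is not an argument. A positive-dimensional contracted fiber is no obstruction to analyticity: blow-downs contract $\bP^1$'s, and the paper itself shows in a remark that $\bP\WHH(2)$ \emph{is} an algebraic variety with $\pi$ identified with a Hassett contraction morphism, even though $\pi$ there collapses positive-dimensional families of elliptic tails. Your claimed contradiction does not materialize: rescaling the plumbing parameter $t\mapsto \lambda(\mu)\rho(\zeta)t$ multiplies the leading coefficient $G(\mu,\zeta,0)$ by a unit --- a homogeneous, not inhomogeneous, transformation --- exactly as for a blow-down, where the leading coefficients do assemble into an embedding of the exceptional fiber. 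So your local analysis cannot distinguish $\pi$ from a legitimate contraction. The obstruction the paper isolates is different in kind: it is a \emph{jump} in contraction behavior within a connected family. They construct a family over $\bC\times\bP^1$ by attaching a pencil of plane cubics $E_b$ at one double pole of a fixed meromorphic differential and scaling the differentials by $[u,v]$, using that the Hodge bundle over the pencil has a degree-one summand; then exactly one compact fiber $\{[1,0]\}\times\bP^1$ is contracted by $\pi$ while all other fibers are not, and a Rigidity Lemma for complex analytic spaces (a morphism from $M\times N$ with $N$ compact connected that contracts one fiber $\{m\}\times N$ must factor through $M$) yields the contradiction. Without some analogue of this jumping phenomenon, your approach cannot close.
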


This result was inspired by the surprisingly recent result that the set of birational automorphisms of $\bP^n$ of degree $\leq d$ is not a variety in such a way that the quotient map from the variety of formulas of degree $\leq d$ birational maps is a morphism, even though the space of birational automorphisms of degree exactly equal to $d$ is a variety \cite{cremona}. In both cases we see that the space is the disjoint union of varieties, but in order to have any connection to moduli problems these varieties must be combined using a topology in such a way that the result cannot be a variety. 

 Another analogous situation occurs for the topological compactification $K_1\cM_{g,1}$ of the moduli space of one-pointed genus $g$ Riemann surfaces $\cM_{g,1}$ used by Kontsevich to prove Witten's conjecture \cite{KontWitt}.  Keel showed that the quotient map $\BM_{g,1}\to K_1\cM_{g,1}$ cannot be an algebraic morphism \cite{Keel}.  

Despite Theorem \ref{T:notalg},  questions about the WYSIWYG compactification are  amenable to  algebraic methods; Theorem \ref{T:notalg} simply indicates that some care may be required.  

\bold{Orbit closures.}  The finite area locus $\WHH_{<\infty}(\kappa)$ of  $\WHH(\kappa)$ was studied in \cite{MirWri} in order to  facilitate the inductive study of $GL^{+}(2,\bR)$ orbit closures. This partial compactification allows one to pass to the boundary of an orbit closure while staying in the realm of finite area Abelian differentials (possibly with marked points and possibly with multiple components).

{We define a stratum of multi-component translation surfaces as follows. Given a multiset $\boldkappa = \{\kappa_1, \ldots, \kappa_k\}$ of multisets $\kappa_i$ as above, define the stratum $\cH(\boldkappa)$ to be the product $\cH(\kappa_1)\times \cdots \times \cH(\kappa_k)$ modulo the subgroup of the symmetric group $S_k$ that fixes $(\kappa_1, \ldots, \kappa_k)$. An element of $\cH(\boldkappa)$ will be called a multi-component surface; the effect of quotienting by the subgroup of $S_k$ is that the $k$ components are not ordered. 
These strata appear  in the finite area locus in the boundary of $\cH(\kappa)$ in $\WHH(\kappa)$; a degeneration of connected surfaces may have several components.}

Recall that $GL^{+}(2, \bR)$ orbit closures may intersect themselves; at such atypical points there are finitely many branches of the orbit closure. Later we will discuss invariant subvarieties of multi-component surfaces; the following result concerns orbit closures of connected (single component) surfaces. 

\begin{thm}\label{T:main}
Let $\cM$ be a $GL^{+}(2, \bR)$ orbit closure in $\HH(\kappa)$ and let $\partial{\cM}_{<\infty}$ be its boundary in $\WHH_{<\infty}(\kappa)$. Suppose that $(X_n, \omega_n)\in \cM$ converge to $(X_\infty, \omega_\infty) \in \partial{\cM}_{<\infty}$.

The intersection $\cM'$  of $\partial{\cM}_{<\infty}$ and the stratum of $(X_\infty, \omega_\infty)$ is an algebraic variety locally described by a finite union of linear subspaces in local period coordinates. 

After removing finitely many terms, the sequence  $(X_n, \omega_n)$ may be partitioned into finitely many subsequences such that for each subsequence the tangent space to a branch of $\partial{\cM}_{<\infty}$ in the boundary stratum   at $(X_\infty, \omega_\infty)\in \WHH_{<\infty} $ is equal to the intersection of the tangent space of a branch of $\cM$ at $(X_n, \omega_n)$ and the tangent space of the boundary stratum, for $n$ sufficiently large.  
\end{thm}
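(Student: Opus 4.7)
My plan is to combine Filip's theorem on algebraicity of $GL^+(2,\bR)$ orbit closures with an explicit local analysis of $\overline{\cM} \subset \BHH(\kappa)$ near $(X_\infty, \omega_\infty)$ via plumbing coordinates. First I would fix a preimage $(\tilde X_\infty, \tilde \omega_\infty) \in \BHH(\kappa)$ of $(X_\infty, \omega_\infty)$ and introduce plumbing coordinates on a neighborhood of $(\tilde X_\infty, \tilde \omega_\infty)$, which split as a product of (i) local period coordinates on the multi-component stratum $\cH(\boldkappa)$ of $(X_\infty, \omega_\infty)$, and (ii) plumbing parameters $s_1, \ldots, s_r$ for the nodes of $\tilde X_\infty$. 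The open stratum $\cH(\kappa)$ corresponds to $s_i \neq 0$, and the boundary stratum to $s_i = 0$.

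The central step is to show that $\overline{\cM}$ is locally cut out by linear equations in these plumbing coordinates. In the open locus this is the Eskin--Mirzakhani--Mohammadi theorem (refined by Filip) that $\cM$ is linear in period coordinates; I would extend these equations across $\{s_i = 0\}$ by matching smooth-side periods with their limits on the singular side (for instance, the period around a vanishing cycle equals $2\pi i\, s_i$ up to holomorphic corrections). The fact that these extended linear equations continue to cut out $\overline{\cM}$ near the boundary is then forced by the algebraicity of $\overline{\cM}$ coming from Filip: it is the Zariski closure of the locus cut out in the interior, and a linear locus is Zariski closed. Establishing this local linear description of $\overline{\cM}$ across the boundary divisor is the main technical hurdle.

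With the local description in hand, the remaining claims follow quickly. Setting $s_i = 0$ in the linear equations cutting out $\overline{\cM}$ exhibits the intersection with the boundary stratum as a finite union of linear subspaces; descending via the quotient $\pi$, which is algebraic on each individual boundary stratum even though global algebraicity fails by Theorem~\ref{T:notalg}, one realizes $\cM'$ as an algebraic variety locally a finite union of linear subspaces in period coordinates on $\cH(\boldkappa)$. For the partition, $\overline{\cM}$ has only finitely many irreducible branches through $(\tilde X_\infty, \tilde \omega_\infty)$; lifting and partitioning the $(X_n, \omega_n)$ by which branch they eventually lie on yields the finitely many subsequences. Within such a subsequence the tangent space $T$ of the corresponding branch is a fixed linear subspace in plumbing coordinates, and under the canonical plumbing trivialization of the cohomology bundle agrees with the tangent space of $\cM$ at each $(X_n, \omega_n)$ for $n$ large. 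The intersection $T \cap T_\partial$ with the tangent space of the boundary stratum is then, by construction, the tangent space of the corresponding branch of $\cM'$ at $(X_\infty, \omega_\infty)$.

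The principal obstacle is thus the local linearity of $\overline{\cM}$ across the boundary. Beyond Filip's algebraicity, this requires care regarding monodromy around the boundary divisor and the precise plumbing description of periods, so that Filip's $\bR$-linear equations on the smooth side have an unambiguous continuation across $s_i = 0$. Once this is done, the algebraicity of $\cM'$, the partition into subsequences, and the tangent space equality all follow from the product structure of the plumbing chart.
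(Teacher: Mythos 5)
The central step of your proposal --- that $\overline{\cM}$ is locally cut out by linear equations in plumbing coordinates, obtained by continuing Filip's linear equations across $\{s_i=0\}$ ``by matching smooth-side periods with their limits'' --- is exactly where the argument breaks, and the paper devotes its cautionary examples (Section \ref{S:caution}) to why. Periods do \emph{not} extend continuously to the WYSIWYG boundary when the approaching surfaces carry cylinders of large modulus: in the paper's example a family satisfying $y-x=d-c$ identically degenerates to a limit on which the pushed-forward periods give $y-x=0\neq d-c$, so the ``limit equation'' fails on the limit surface. Such cylinders correspond precisely to horizontal nodes (simple polar nodes) in the plumbing picture, and near those boundary points there is no unambiguous continuation of the equations. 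The paper's fix is dynamical rather than algebro-geometric: the Cylinder Deformation Theorem together with a cylinder finiteness statement (Theorem \ref{T:deform} and Lemma \ref{L:deformlimit}) let one deform each $(X_n,\omega_n)$ inside $\cM$, without changing $T(\cM)$, $T(\cM)\cap\Ann(V)$, or the WYSIWYG limit, to a surface with no large-modulus cylinders; only after this reduction does continuity of periods hold, and it is proved (Theorem \ref{T:continuity}, via sectors $S_j$ of the multi-scale compactification whose level graphs have no horizontal edges) only on that locus. Your proposal has no substitute for this step.

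A second, independent problem is that ``linear in period coordinates'' does not become ``linear, or even algebraic, in plumbing coordinates'': periods are transcendental (logarithmic) functions of the plumbing parameters, so the appeal to ``a linear locus is Zariski closed'' and the product structure you use for the tangent space equality are not available. The paper instead establishes the three claims separately: the boundary is a variety because its components are projections of $G^{-1}(\overline{\cM})$ from a product of strata and moduli spaces (constructible, hence a variety once known to be closed); linearity follows from $GL^{+}(2,\bR)$-invariance via Kontsevich's folklore observation; and the tangent space is pinned down by two inclusions --- ``not too big'' from continuity of periods on the sectors after the cylinder deformation, and ``not too small'' from Proposition \ref{P:xi}, which guarantees that small deformations of $(X_\infty,\omega_\infty)$ in directions of $T(\cM)\cap\Ann(V)$ are realized in the boundary. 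Your proposal addresses neither inclusion independently of the unproven local linearity across the boundary divisor.
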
 

The tangent space to the boundary stratum naturally sits inside the tangent space to $\cH$ at $(X_n, \omega_n)$ for $n$ sufficiently large; this is explained in Section 2 and \cite{MirWri}. Note that $X_n$ and $X_\infty$ denote Riemann surfaces with a finite set of marked points, and that $X_\infty$ may have zeros of order zero  (marked points where the differential does not vanish) even if the $X_n$ do not.

In the case when $X_\infty$ has only one component, Theorem \ref{T:main} was proven in \cite{MirWri}. In the case when $X_\infty$ has multiple components, a proof was given in \cite{MirWri} that was conditional on an anticipated but still unproven generalization of \cite{EM, EMM} to multi-component surfaces. Here we give an unconditional proof using different techniques, thus giving the first complete proof in the multi-component case, which is crucial for applications.  

The new proof is much simpler and completely different, although it will use a non-trivial foundational result on $\WHH$ from \cite{MirWri}. The main difference is that here we use  Filip's result that orbit closures are algebraic varieties \cite{Fi2}. In light of Filip's work, we may use ``invariant subvariety" as a synonym for $GL^{+}(2,\bR)$ orbit closure. When we do so, it is implicit that the invariant subvariety is irreducible.

\bold{Other points of view.} Theorem~\ref{T:main} has an interpretation in terms of flat geometry. An orbit closure $\cM$ is locally cut out by linear equations on period coordinates of the ambient stratum, and period coordinates of an Abelian differential corresponds to edges of a polygonal presentation. If some edges of the polygon shrink to length zero, the equations on the limit surface can be obtained by replacing the variables for any edges that are collapsed with zero. 

A more abstract point of view is also possible in terms of the linear equations locally defining $\cM$. Each equation can be interpreted as a relative homology class. Theorem~\ref{T:main} gives that the linear equations defining the boundary orbit closure arise from pushing forward those defining $\cM$ via a collapse map, which will be introduced in Section \ref{S:compactification}.

\bold{The structure of the multi-component boundary.} 
Strata of the boundary of $\cM$ where the translation surfaces have multiple components have some especially interesting and useful properties. We will show that each stratum of the boundary of $\cM$ is again an invariant subvariety, in that it is an algebraic variety that is invariant under a natural $GL^{+}(2,\bR)$ action. 

Each stratum of multi-component surfaces is a quotient of a product of strata by a subgroup of a symmetric group. Given an invariant subvariety of such a stratum, one can ``lift" it to a product of strata by taking an irreducible component of its preimage, and it is often convenient to do so. This lift is well-defined up to the ordering of the components.

An invariant subvariety $\cM\subset \cH(\kappa_1)\times \cdots \times \cH(\kappa_k)$ of a product of strata is called prime unless, possibly after reordering the components, there is some $1\leq s< k$, and invariant subvarieties $\cM' \subset \cH(\kappa_1)\times \cdots \times\cH(\kappa_s)$ and $\cM'' \subset \cH(\kappa_{s+1})\times \cdots\times \cH(\kappa_k)$ such that $\cM=\cM'\times \cM''$. More generally, an invariant subvariety of a stratum of multi-component surfaces is prime if its lift is prime. So, any invariant subvariety can (after lifting if required) be written as a product of primes. The definition is clarified in Section \ref{S:multi}, where we also give a simple linear algebraic characterization. 

We provide the following result so that it can be used with Theorem \ref{T:main} in forthcoming work towards the goal of classifying orbit closures.  

\begin{thm}\label{T:prime}
Let $\cN$ be a prime invariant subvariety of a product of strata. Then 
\begin{enumerate}
\item if $\pi_i$ is  the projection to the $i$-th component, there is an irreducible invariant subvariety $\cN_i$ and a finite union $\cN_i'$ of irreducible invariant subvarieties properly contained in $\cN_i$ such that $$\cN_i-\cN_i'\subset \pi_i(\cN) \subset \cN_i,$$
\item locally in $\cN$, the absolute periods of any component of a multi-component surface 
 determine the absolute periods of any other component, 
\item each $\cN_i$ has the same rank, and 
\item certain natural factors of the Jacobians of the different components of any surface in $\cN$ are isogenous. 
\end{enumerate}
\end{thm}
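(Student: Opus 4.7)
The plan is to prove parts (1)--(4) in order. Fix a generic point of $\cN$ corresponding to a multi-component surface with components $(X_1,\omega_1),\ldots,(X_k,\omega_k)$, let $p\colon T\cN \to V := \bigoplus_{i=1}^k H^1(X_i;\bC)$ be the absolute-period projection with image $W$, and write $p_i\colon W\to V_i := H^1(X_i;\bC)$ for the coordinate projections with $W_i := p_i(W)$.

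For part (1), the projection $\pi_i$ is $GL^{+}(2,\bR)$-equivariant and algebraic, so $\pi_i(\cN)$ is a $GL^{+}(2,\bR)$-invariant constructible set whose Zariski closure is $GL^{+}(2,\bR)$-invariant and algebraic. Filip's theorem \cite{Fi2} then decomposes this closure into finitely many irreducible invariant subvarieties, and irreducibility of $\cN$ together with continuity of $\pi_i$ forces it to be irreducible: this is $\cN_i$. The residual set $\cN_i\setminus\pi_i(\cN)$ is constructible and $GL^{+}(2,\bR)$-invariant, so its Zariski closure is a proper $GL^{+}(2,\bR)$-invariant subvariety of $\cN_i$, which Filip again identifies with a finite union of proper irreducible invariant subvarieties; this is $\cN_i'$.

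Part (2) is the heart of the theorem and where the primality hypothesis enters. I would argue the contrapositive: if $p_i$ has a nonzero kernel $K\subset W\cap\bigoplus_{j\neq i}V_j$ for some $i$, then $\cN$ is not prime. Let $S\subset\{1,\ldots,k\}\setminus\{i\}$ be minimal with $K\subset\bigoplus_{j\in S}V_j$. The objective is to upgrade the existence of $K$ to a splitting $T\cN=T'\oplus T''$ respecting the partition $\{i\}\cup S$ versus its complement. For this I would combine the linear-algebraic characterization of primality promised in Section \ref{S:multi}, which describes primeness directly in terms of the absolute-period image $W$, with rel-foliation and cylinder deformation arguments, which are componentwise in nature and supply enough tangent vectors supported in individual components to promote each element of $K$ to a tangent vector in $T\cN$ whose $T\cH(\kappa_j)$-components vanish for $j\notin S\cup\{i\}$. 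Passing back to the variety via Filip's algebraicity and invoking irreducibility of $\cN$ globalizes this splitting to a product decomposition, contradicting primeness. The main obstacle is exactly this promotion of a decomposition of $W$ to a decomposition of all of $T\cN$.

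Part (3) is then immediate from (2): each $p_i$ is injective, so $p_j\circ p_i^{-1}\colon W_i\to W_j$ is a linear isomorphism and the complex dimensions $\dim_\bC W_i$, and hence the ranks of the $\cN_i$, coincide. For part (4), the isomorphism $p_j\circ p_i^{-1}$ varies holomorphically in the family and is equivariant under monodromy; invoking Filip's result that $\cN$ and its tangent data are defined over a number field, it descends to a morphism of polarizable $\bQ$-sub-Hodge-structures of $H^1(X_i;\bQ)$ and $H^1(X_j;\bQ)$. The natural factors of $\Jac(X_i)$ and $\Jac(X_j)$ are the abelian subvarieties corresponding to these sub-Hodge-structures, and the resulting Hodge-theoretic isomorphism yields the required isogeny.
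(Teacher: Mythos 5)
Your parts (1) and (3) are essentially the paper's argument, and your identification of the main obstacle in part (2) is accurate --- but you do not overcome it, and the tools you propose (cylinder deformations, the rel foliation) are not the ones that do. The engine of the paper's proof of (2) is the simplicity theorem of \cite{Wfield} (Theorem \ref{T:simplicity}): any proper flat subbundle of $T(\cN)$ over a finite cover of the complement of a proper invariant subvariety lies in $\ker(p)$. Given one tangent vector $(v_1,v_2)$ with $v_1\notin\ker(p)$ and $v_2\in\ker(p)$, one uses total irreducibility of the monodromy on $p(T(\cM_1))$ together with the algebro-geometric $\pi_1$ facts of Proposition \ref{P:agtop} to arrange $v_2=0$; then the flat subbundle generated by $(v_1,0)$ --- the kernel of the projection to the second factor --- must be all of $T(\cM_1)$, which is exactly the splitting you need (Corollary \ref{C:fix}). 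Cylinder deformations cannot substitute here: an equivalence class of $\cN$-parallel cylinders can straddle several components, so they do not ``supply tangent vectors supported in individual components,'' and even if they did you would still need irreducibility of monodromy to conclude that such vectors span a factor of $T(\cN)$. Moreover the paper's argument is an induction on the number of components: to apply Corollary \ref{C:fix} with $\cM_1$ a multi-component prime factor of $\pi_{\hat{1}}(\cN)$, one must already know that that factor satisfies conclusion (2), and one must extend Theorem \ref{T:simplicity} to such primes; this is what forces the dynamical preliminaries of Section \ref{SS:multidyn} (existence of a finite $SL(2,\bR)$-invariant measure, hyperbolicity and ergodicity of the diagonal geodesic flow). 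None of this machinery appears in your sketch.

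In part (4) you cite Filip for the Hodge-theoretic compatibility, but Filip's theorem \cite{Fi1} is proved for single-component surfaces; in the multi-component setting the paper must re-derive the compatibility of $\bigoplus p(T\cN)_\tau\oplus\bW$ with the Hodge decomposition from Deligne's decomposition \cite{D1} of the variation of Hodge structure, and ruling out the degenerate case there requires non-compactness of the monodromy of $p(T\cN)$, which again comes from the dynamics (Corollary \ref{C:EV}). So part (4) as you state it also rests on an input that is not yet available in this generality. Part (1) is correct as written.
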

Part of this theorem was anticipated in \cite[Conjecture 2.10]{MirWri}. We anticipate that the second point will be the most useful for classification. 
 
\bold{Additional context.} Compactifications of strata are currently an area of great interest, see for example \cite{Gen, Chen, Many, bainbridge2016strata, chen2016principal, chen2016spin, farkas2016moduli}. 
It would be interesting to understand the boundary of a $GL^{+}(2, \bR)$ orbit closure in the larger compactifications studied by these works.

The study of orbit closures via their boundary is already proving successful, see for example \cite{mirzakhani2016full, Apisa2, aulicino2016rank}.

A version of $\WHH$ appeared previously in the work of McMullen \cite{McM:nav}. 

{One could also define a version of $\WHH$ that remembered finitely much more information, namely which of the marked points were glued together after the collapse, but we chose not to do so.} 

\bold{Acknowledgments.} The authors thank Paul Apisa, Francisco Arana Herrera, Matt Bainbridge, Brian Conrad, Ben Dozier, Simion Filip, Francois Greer, Brendan Hassett, Tasho Kaletha, Maryam Mirzakhani, and Martin M\"{o}ller for useful conversations. The authors especially thank the referee for many helpful comments.

During the preparation of this work the first named author was partially supported by an NSF CAREER Award DMS-1350396, an NSF Standard Grant DMS-2001040, a Simons Collaboration Grant 635235 and a von Neumann Fellowship at IAS, and the second named author was partially  supported by a Clay Research Fellowship.

\section{The partial compactification}\label{S:compactification}

\begin{lem}\label{L:iff}
A sequence $(X_n, \omega_n)\in \cH$ converges to $(X, \omega)$ in $\WHH$ if and only if, for every subsequence $n_k$ such that $X_{n_k}$ converges in the Deligne-Mumford compactification, the sequence $(X_{n_k}, \omega_{n_k})$ breaks into a union of finitely many subsequences each of which converges in $\BHH$ to a limit $(X', \omega')\in \BHH$ that satisfies  $\pi(X', \omega')=(X,\omega)$. (The limit $(X',\omega')$ may depend on the subsequence.)
\end{lem}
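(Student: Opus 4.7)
The plan is to prove the two directions of the equivalence separately.

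For the backward direction, I use formal properties of the quotient topology. By construction, $\pi\colon\BHH\to\WHH$ is continuous, and its restriction to the open stratum $\cH$ is the identity since each smooth $(X_n,\omega_n)\in\cH$ is its own singleton equivalence class. Consequently, any $\BHH$-convergent subsequence $(X_{n_k},\omega_{n_k})\to(X',\omega')$ with $\pi(X',\omega')=(X,\omega)$ also $\WHH$-converges to $(X,\omega)$. To upgrade this to convergence of the full sequence, I argue by contradiction: if $(X_n,\omega_n)\not\to(X,\omega)$ in $\WHH$, some sub-sequence avoids an open neighborhood of $(X,\omega)$; using compactness of $\BM_{g,n}$ I pass to a further DM-convergent sub-sub-sequence, apply the hypothesis to split it into finitely many $\BHH$-convergent pieces, and obtain a contradiction from the preceding observation via pigeonholing on the finite partition.

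For the forward direction, assume $(X_n,\omega_n)\to(X,\omega)$ in $\WHH$ and fix a DM-convergent subsequence $X_{n_k}\to X_\infty$ in $\BM_{g,n}$. Continuity of $\pi$ together with Hausdorffness of $\WHH$ (invoked as a foundational property) forces any $\BHH$-accumulation point $(X',\omega')$ of $(X_{n_k},\omega_{n_k})$ to satisfy $\pi(X',\omega')=(X,\omega)$: along the $\BHH$-convergent sub-subsequence, continuity of $\pi$ yields $\WHH$-convergence to $\pi(X',\omega')$, and uniqueness of limits forces this to equal $(X,\omega)$. The set of such accumulation points is then finite, since with the underlying stable curve $X_\infty$ fixed, any $(X',\omega')\in\pi^{-1}(X,\omega)$ is determined by a combinatorial matching of the non-zero-differential components of $X_\infty$ with the components of $X$, with $\omega'$ forced to vanish on all remaining components. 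Given finitely many accumulation points, a standard argument partitions the sequence (up to finitely many terms) into $\BHH$-convergent subsequences, one per accumulation point.

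The hard part, which the above outline sidesteps, is to show that the sequence $(X_{n_k},\omega_{n_k})$ actually has $\BHH$-accumulation points, equivalently that $\omega_{n_k}$ stays bounded in the fiber of the Hodge bundle over a DM-neighborhood of $X_\infty$. The map $\pi$ is not proper, so $\WHH$-convergence does not formally imply boundedness in $\BHH$, and this step cannot be resolved by soft point-set reasoning alone. My plan is to derive the boundedness from the explicit flat-geometric description of the $\WHH$ topology developed in \cite{MirWri}: a $\WHH$-neighborhood of $(X,\omega)$ is described using polygonal presentations, which control the periods of $\omega_{n_k}$ along a chosen basis of absolute and relative homology classes in the DM-neighborhood of $X_\infty$, thereby confining $\omega_{n_k}$ to a bounded subset of the Hodge bundle fiber.
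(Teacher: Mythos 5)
Your argument follows the paper's route: the backward direction is the same combination of continuity of $\pi$, compactness of $\BM_{g,n}$, and pigeonholing, and the forward direction rests on the same finiteness observation --- once the Deligne--Mumford limit $X_\infty$ is fixed, the elements of $\pi^{-1}(X,\omega)$ lying over $X_\infty$ are the finitely many $(X_\infty,\omega^{(i)})$ obtained by permuting $\omega$ among isomorphic non-collapsed components and setting it to zero on the collapsed ones.

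Two points, however. First, your assertion that $\pi$ is not proper is false, and the step you single out as ``the hard part'' is softer than you believe. The fiber $\pi^{-1}(X,\omega)$ is compact: it is a union of products of Deligne--Mumford spaces parameterizing the collapsed components, with the differential fixed on the non-collapsed ones --- this is precisely the structural fact the paper's proof opens with. Moreover $\pi$ is a closed map, since the saturation of a closed set is closed (equivalent points differ only in their collapsed components, which range over compact moduli). A closed continuous map with compact fibers is proper, so $\pi^{-1}\bigl(\{(X,\omega)\}\cup\{(X_n,\omega_n)\}_n\bigr)$ is compact and $\BHH$-accumulation points exist; the flat-geometric criterion from \cite{MirWri} is not needed for boundedness, though that heavier route would also work if carried out. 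Second, you invoke Hausdorffness of $\WHH$ to force accumulation points into $\pi^{-1}(X,\omega)$, but in the paper Hausdorffness is deduced \emph{from} this lemma (see the remark following it), so as organized your argument is circular. It is repairable --- two disjoint compact fibers of a closed quotient map admit disjoint saturated open neighborhoods, which is all the separation you need --- but you should not cite Hausdorffness as a prior foundational fact.
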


An irreducible component $Y$ of $(X', \omega')\in \BHH$ has two possible types, depending on whether $\omega'$ is identically zero or not on $Y$. We call the former type a ``collapsed component''. The idea of the proof of Lemma~\ref{L:iff}  is that the data of $(X', \omega')$ is the data of $\pi(X', \omega')$ plus the additional data of the collapsed components and how the components are glued together at nodes. This additional data is recorded in  the Deligne-Mumford compactification. In other words, the images of $(X', \omega')$ in $\WHH$ and the Deligne-Mumford compactification almost determine $(X', \omega')\in\BHH$ uniquely. The only situation in which $(X', \omega')\in\BHH$ is not determined uniquely is when there are multiple components of the limit in the Deligne-Mumford compactification that are isomorphic. In this case, there may be finitely many  $(X', \omega')\in\BHH$, corresponding to different permutations of non-zero differentials on isomorphic components of $X$. 

\begin{proof}[Proof of Lemma~\ref{L:iff}]
For $(X, \omega) \in \WHH$, we first analyze its preimage in $\BHH$. Suppose that $(X, \omega)$ consists of $(Y_1, \omega_1), \ldots, (Y_m, \omega_m)$,  where $Y_1, \ldots, Y_m$ are irreducible components of $X$ and $\omega_i$ is the restriction of $\omega$ to $Y_i$, which is non-zero. Then $\pi^{-1}(X, \omega)\subset \BHH$ is a compact subset given by the union of certain products of Deligne-Mumford compactifications parameterizing the collapsed components. In particular, all $(X', \omega') \in \pi^{-1}(X, \omega)$ contain the same set of non-collapsed components $(Y_i, \omega_i)$ and only the collapsed components can vary. 

Suppose that a sequence $(X_n, \omega_n)$ converges to $(X, \omega)$ in $\WHH$. Let $n_k$ be a subsequence such that $X_{n_k}$ converges to $X'$ in the Deligne-Mumford compactification. Then $X'$ and $\omega$ determine finitely many possible limits $(X', \omega^{(i)}) \in \pi^{-1}(X, \omega)$, where $\omega^{(i)} = \omega$ on the non-collapsed components, $\omega^{(i)} = 0$ on the collapsed components, and the $\omega^{(i)}$ differ from each other by permuting differentials on isomorphic non-collapsed components. Thus the sequence $(X_{n_k}, \omega_{n_k})$ breaks into finitely many subsequences, each of which converges to one such limit  $(X', \omega^{(i)})$.   

Conversely, suppose for every subsequence $n_k$ such that $X_{n_k}$ converges in the Deligne-Mumford compactification, the sequence $(X_{n_k}, \omega_{n_k})$ 
breaks into a finite union of subsequences  each of which converges in $\BHH$ and the limit $(X', \omega')\in \BHH$ satisfies $\pi(X', \omega{'})=(X,\omega)$.  
{Since $\pi$ is continuous, this implies that the subsequence of $(X_{n_k}, \omega_{n_k})$ converges to $(X,\omega)$. Compactness of the Deligne-Mumford compactification now gives the result, since every subsequence of $X_n$ has a sub-subsequence that converges.}
%
\end{proof}

\begin{rem}
{Lemma \ref{L:iff} shows in particular that every sequence in $\cH$ has a unique limit in $\WHH$. The same analysis applies to nets, with the same conclusion, and one can extend the analysis to nets in $\WHH$. (Without loss of generality, the set of collapsed components can be assumed to be constant along the net.) Since a space is Hausdorff if and only if convergent nets have unique limits, this shows $\WHH$ is Hausdorff. 

An alternate approach to showing that $\WHH$ is Hausdorff is to show that $\pi$ is closed and invoke \cite[exercise 7 on page 199]{Munkres}. To see that $\pi$ is closed, for each closed set $E \in \BHH$, it is not hard to show that $\pi^{-1}(\pi(E))$ is closed. (Here it suffices to consider sequences, and without loss of generality the set of collapsed components can be assumed to be constant along the sequence.)}

Once $\bP\WHH$ is known to be Hausdorff, it follows  that it is metrizable \cite[Corollary 23.2]{Willard}. Hence $\WHH$ is also metrizable, since it is a $\mathbb C^{*}$ bundle over $\bP\WHH$. In particular, using nets is not required.
\end{rem}

%
%
%
%

The second criterion for convergence below forms part of our basic understanding of $\WHH$. It says that a sequence converges if and only if the flat metrics converge away from the part of the surfaces that is shrinking. 

\begin{thm}[Mirzakhani-Wright]
$(X_n, \omega_n)\in \cH$ converge to $(X, \omega)$ in $\WHH$ if and only if there are decreasing neighborhoods $U_n\subset X$ of the set of marked points $S$ with $\cap U_n=S$ and maps $g_n\colon X\setminus U_n\to X_n$ that are diffeomorphisms onto their range, such that 
\begin{enumerate}
\item $g_n^*(\omega_n) \to \omega$ in the compact open topology on $X\setminus S$, and 
\item the injectivity radius of points not in the image of $g_n$ goes to zero uniformly in $n$.
\end{enumerate}
\end{thm}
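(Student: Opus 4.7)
The plan is to use Lemma~\ref{L:iff} to reduce convergence in $\WHH$ to convergence in $\BHH$ (after restricting to subsequences where the underlying pointed curves converge in Deligne--Mumford), since convergence in $\BHH$ admits a standard flat-geometric description via plumbing/gluing coordinates. Both directions then become statements about how $\pi$ deletes the collapsed components and pinched nodes of a $\BHH$-limit.

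For the ``only if'' direction, suppose $(X_n, \omega_n) \to (X, \omega)$ in $\WHH$. First I would fix an arbitrary subsequence along which $X_n$ converges in $\BM_{g,n}$, and apply Lemma~\ref{L:iff} to break it into finitely many further subsequences, each converging in $\BHH$ to some $(X', \omega')$ with $\pi(X', \omega') = (X, \omega)$. The standard description of convergence in $\BHH$ yields diffeomorphisms from $X'$ minus a shrinking neighborhood of its nodes into $X_n$, with pullbacks of $\omega_n$ converging to $\omega'$ uniformly on compact subsets of $X'$ minus the nodes. Since $\pi$ deletes precisely the collapsed components of $X'$ and fills in resulting punctures, the non-collapsed part of $X'$ is canonically identified with $X$, and restricting these diffeomorphisms to the non-collapsed part produces the required $g_n\colon X \setminus U_n \to X_n$. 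Condition (1) is then immediate, and condition (2) follows because points outside the image lie either in a thin neck from a pinching node or in a shrinking region mapping to a collapsed component --- both have injectivity radius tending to zero.

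The ``if'' direction is the converse application of Lemma~\ref{L:iff}. Assume (1) and (2). Given any subsequence along which $X_n$ converges in $\BM_{g,n}$ and $(X_n, \omega_n)$ converges in $\BHH$ to some $(X', \omega')$ (extracted by compactness of $\BHH$), condition (1) identifies part of $X'$ --- the closure of the limit of $g_{n}(X\setminus U_n)$ --- with $X$ itself, with $\omega'$ restricting to $\omega$ there. Condition (2) then forces everything outside this part to have vanishing injectivity radius, hence to consist only of nodes and components on which $\omega'$ vanishes identically, i.e. collapsed components. Thus $\pi(X', \omega') = (X, \omega)$, and Lemma~\ref{L:iff} concludes convergence in $\WHH$.

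The main obstacle is the passage from subsequential constructions to statements about the full sequence: conditions (1) and (2) require a \emph{single} choice of $U_n$ and $g_n$ for every $n$, while different subsequences may limit to different $(X', \omega')\in \pi^{-1}(X,\omega)$, with different configurations of collapsed components and different dual graphs. One must patch the subsequential constructions together consistently. I would handle this by a diagonal extraction exploiting the discreteness of the combinatorial data (dual graphs and the subset of collapsed components), which is constant along each subsequence after refinement; alternatively by contradiction, since a failure of (1) or (2) along the full sequence produces a subsequence along which no admissible $g_n$, $U_n$ exist, contradicting what was established above.
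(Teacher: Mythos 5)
The paper does not prove this theorem: it is quoted from Mirzakhani--Wright, with the proof cited to \cite[Definition 2.2, Proposition 9.8]{MirWri}. So there is no internal argument to compare your proposal against, and your reduction via Lemma~\ref{L:iff} to subsequences converging in $\BHH$ is certainly the right first move (it mirrors how the two definitions of $\WHH$ are reconciled). The difficulty is that your proposal then treats as ``standard'' precisely the step that constitutes the entire content of the theorem: the equivalence between convergence in the Hodge bundle over Deligne--Mumford (which is a statement about plumbing coordinates for the complex structures together with convergence of the $\omega_n$ as holomorphic sections of the dualizing sheaf) and the flat-geometric conditions (1) and (2). Invoking ``the standard description of convergence in $\BHH$ via plumbing'' to produce maps $g_n$ satisfying (1) and (2) is close to assuming what is to be proved.

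Concretely, two steps need real arguments that the proposal does not supply. First, in the ``only if'' direction, condition (2) does not follow formally from $\BHH$-convergence: you must show that the complement of the image of $g_n$ --- plumbing necks at non-polar nodes, regions limiting onto collapsed components, and long flat cylinders at nodes where $\omega'$ has simple poles --- has injectivity radius tending to zero uniformly. For the first two one can argue via area (the flat area of these regions tends to $0$, and an embedded ball of radius $\e$ has area at least $\pi\e^2$), but a simple-pole node produces a cylinder of \emph{fixed} circumference $c$ and growing length, whose interior points have injectivity radius $c/2$; there condition (2) forces the images of $g_n$ to exhaust the cylinder entirely, which requires choosing $U_n$ and $g_n$ adapted to $X_n$ rather than reading them off from a neighborhood of the node in $X'$. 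Second, in the ``if'' direction, extracting a $\BHH$-limit by compactness is not enough: you must show that conditions (1) and (2), which only control the flat metrics on $g_n(X\setminus U_n)$, force the conformal structures $X_n$ to converge in $\BM_{g,n}$ to a curve whose non-collapsed part is $X$ and force the sections $\omega_n$ of the Hodge bundle to converge to the asserted limit (rather than degenerating or permuting mass onto the complement). This analytic comparison between flat-metric convergence and convergence of complex structures is the substance of \cite[Proposition 9.8]{MirWri}. By contrast, the patching issue you flag at the end (a single choice of $U_n$, $g_n$ versus subsequential limits with different collapsed configurations) is the routine part, and your diagonal/contradiction fix for it is fine.
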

The injectivity radius at a point is defined as the sup of all $\e>0$ such that the ball of radius $\e$ in the flat metric is embedded and does not contain any marked points or singularities. For the proof, see \cite[Definition 2.2, Proposition 9.8]{MirWri}. 

\begin{cor}
 The action of $GL^{+}(2,\bR)$ extends continuously to $\WHH$. 
\end{cor}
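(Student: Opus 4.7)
The plan is to extend the action componentwise and then verify continuity using the Mirzakhani--Wright convergence criterion (the theorem just stated).

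First I would define the extension. For $(X,\omega) \in \WHH$ written as a disjoint union of components $(Y_i, \omega_i)$ and $A \in GL^{+}(2,\bR)$, set $A\cdot(X,\omega)$ to be the multi-component surface whose $i$-th component is $A\cdot(Y_i,\omega_i)$, with the same marked points. This is well defined on the quotient: if $(X',\omega')\in\BHH$ lifts $(X,\omega)$, then acting by $A$ on $\BHH$ (which is standard, as $A$ acts on the Hodge bundle fixing the underlying curve) preserves the set of collapsed components since $A\cdot 0 = 0$, and the non-collapsed components transform exactly as required, so $\pi(A\cdot(X',\omega')) = A\cdot(X,\omega)$.

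Next I would establish joint continuity in $A$ and $(X,\omega)$ by pulling back the diffeomorphism data. Suppose $A_n \to A$ in $GL^{+}(2,\bR)$ and $(X_n,\omega_n) \to (X,\omega)$ in $\WHH$. Applying the Mirzakhani--Wright theorem, there exist decreasing neighborhoods $U_n \subset X$ of the marked-point set $S$ with $\bigcap U_n = S$ and diffeomorphisms $g_n \colon X\setminus U_n \to X_n$ onto their images such that $g_n^*\omega_n \to \omega$ in the compact-open topology on $X\setminus S$ and the injectivity radius of points outside $\mathrm{im}(g_n)$ tends to $0$ uniformly. The same maps $g_n$, viewed now between $X\setminus U_n$ (with differential $A\cdot\omega$) and $A_n\cdot X_n$ (with differential $A_n\cdot\omega_n$), satisfy $g_n^*(A_n\cdot\omega_n) = A_n\cdot g_n^*\omega_n \to A\cdot\omega$ in the compact-open topology, because the $GL^{+}(2,\bR)$ action on $1$-forms is continuous in both variables. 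For the injectivity radius condition, note that applying an element $B\in GL^{+}(2,\bR)$ distorts flat distances by at most the factor $\|B\|\cdot\|B^{-1}\|$, so since $\{A_n\}$ is relatively compact the injectivity radii outside $\mathrm{im}(g_n)$ in $A_n\cdot X_n$ still go to $0$ uniformly. The two hypotheses of the Mirzakhani--Wright criterion are therefore satisfied, giving $A_n\cdot(X_n,\omega_n) \to A\cdot(X,\omega)$ in $\WHH$.

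I do not expect any serious obstacle here: the work was already done in formulating the convergence criterion in a way that is transparently $GL^{+}(2,\bR)$-equivariant. The only mild subtlety worth stating explicitly is that on the $\WHH$ side the $A$-action is componentwise, so one has to check that the component decomposition of $A\cdot(X,\omega)$ matches the limit produced by $g_n$; this is automatic because $g_n$ is defined on $X\setminus U_n$ and the components of $X$ are precisely the components that survive the collapse, and they are preserved by the action of $A$.
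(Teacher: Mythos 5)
Your continuity argument is exactly the intended one: the corollary is stated immediately after the Mirzakhani--Wright convergence criterion precisely because that criterion is manifestly $GL^{+}(2,\bR)$-equivariant, and your verification (same $g_n$, $g_n^*(A_n\cdot\omega_n)=A_n\cdot g_n^*\omega_n\to A\cdot\omega$, injectivity radii distorted by a factor bounded over the relatively compact set $\{A_n\}$) is the correct way to make that precise, including the joint continuity in $A$.

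One sub-claim in your well-definedness paragraph is wrong, though it is also unnecessary. You assert that $A$ ``acts on the Hodge bundle fixing the underlying curve,'' so that the action descends from $\BHH$. A general element of $GL^{+}(2,\bR)$ does \emph{not} fix the underlying Riemann surface: it acts on the pair $(\Re\omega,\Im\omega)$ and produces a new complex structure (only the $\bC^*$ subgroup fixes the curve). In particular the $GL^{+}(2,\bR)$ action does not extend to $\BHH$ in any evident way, since there is no natural prescription for how a collapsed component (where $\omega\equiv 0$) should transform --- this is part of the point of passing to $\WHH$. Fortunately you do not need to descend anything: a point of $\WHH$ \emph{is} a finite union of translation surfaces (possibly with simple poles at marked points), so the componentwise definition of $A\cdot(X,\omega)$ is unambiguous on the nose, and your continuity computation then also shows the image lies in $\WHH$. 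A second, minor point: the Mirzakhani--Wright criterion as stated characterizes convergence of sequences drawn from $\cH$, so to get continuity of the extended action at boundary points for sequences lying in the boundary you should either note that the criterion extends to such sequences or invoke the standard extension lemma for continuous maps on dense subsets of metrizable spaces, using that $\WHH$ is metrizable (established in the remark preceding the theorem).
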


{If a sequence $(X_n, \omega_n)$ in $\HH$ converges to $(X, \omega)$ in $\WHH$}, then there are natural collapse maps $f_n\colon X_n\to X'$, defined for $n$ sufficiently large. Here $X'$ is $X$ with some additional identification between marked points; passing to a subsequence of $(X_n, \omega_n)$ that converges in $\BHH$, we can describe $X'$ as the limit in $\BHH$ with each zero area component collapsed to a point.  (The collapse maps are only well defined up to pre- and post-compositions of automorphisms. This is related to the fact that all the moduli spaces under consideration are only orbifolds.) 

Note that if $\Sigma'$ is a finite subset of $X'$ containing the nodes, and $\Sigma$ is its pre-image on $X$, then $H_1(X,\Sigma)\simeq H_1(X', \Sigma')$. This allows us to conflate $X$ and $X'$ when working on the level of relative (co)homology. 

The maps $f_n$ can be chosen to map zeros of $\omega_n$ to zeros or marked points. If $\Sigma_n$ is the set of zeros of $\omega_n$, we obtain well defined maps 
$$f_n^*\colon H^1(X, \Sigma) \to H^1(X_n, \Sigma_n) \quad\text{and}\quad (f_n)_*\colon H_1(X_n, \Sigma_n) \to H_1(X, \Sigma).$$ 
The first map is injective and the second surjective. Hence $f_n^*$ naturally identifies the tangent space $(X, \omega)$ in its stratum with the image of $f_n^*$, a subspace of the tangent space of the stratum of $(X_n,\omega_n)$. 

This subspace can also be identified with the annihilator $\Ann(V_n)$ of the subspace $V_n=\ker((f_n)_*)$ of relative homology. We call $V_n$ the space of vanishing cycles, although some care should be taken since this term might equally well be used to refer to a similar object in absolute homology. 

Since so much collapsing can occur, we do not  know of a nice way to describe the local structure of $\WHH$ near a boundary point. However we have the following lemma indicating that the situation is not too pathological from at least one point of view. 

\begin{prop}\label{P:xi}
Let $(X,\omega)\in\WHH_{<\infty}$. Then
$\Ann(V)$ can be defined on a neighborhood of $(X,\omega)$ in $\cH$ and is locally constant. There is a neighborhood of $0$ in $\Ann(V)$ such that if $\xi_n, \xi$ are in this neighborhood and $\xi_n\to \xi$, then $(X_n, \omega_n)+\xi_n$ is well-defined and converges to $(X,\omega)+\xi$. 
\end{prop}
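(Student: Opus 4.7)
The plan is to work in period coordinates and use the Mirzakhani--Wright flat-geometric convergence criterion to define a canonical collapse map on an open neighborhood of $(X,\omega)\in\WHH_{<\infty}$ in $\cH$, rather than merely along a prescribed sequence.

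\textbf{Defining $\Ann(V)$ and local constancy.} For any $(X', \omega') \in \cH$ in a small open neighborhood of $(X, \omega)$, the Mirzakhani--Wright criterion produces a diffeomorphism $g \colon X \setminus U \to X'$, where $U$ is a small neighborhood of the marked points $S$, with $g^*\omega'$ close to $\omega$ on $X \setminus U$ and with the injectivity radius of $X' \setminus g(X \setminus U)$ uniformly small. Collapsing $X' \setminus g(X \setminus U)$ to the appropriate marked points of $X$ gives a collapse map $f' \colon X' \to X''$, and the isotopy class of $f'$ is independent of the choice of $g$: for another such $\tilde g$, both $g$ and $\tilde g$ are close to isometries onto their range, so $g^{-1} \circ \tilde g$ is isotopic to the identity on $X \setminus U$. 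This unambiguously defines $V' := \ker(f'_*) \subset H_1(X', \Sigma'; \bZ)$ and $\Ann(V') \subset H^1(X', \Sigma'; \bC)$ on the neighborhood. Local constancy then follows from the Gauss--Manin flat connection on the relative (co)homology bundle of $\cH$: the topological recipe for $V'$ is constant on connected open subsets where the same $g$ (up to isotopy) works, so $V'$ is a locally constant sub-local-system and $\Ann(V')$ a locally constant subbundle of the tangent bundle.

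\textbf{Convergence of perturbations.} The second claim is essentially formal in period coordinates. By the first part and the identifications preceding the proposition, $f_n^*$ identifies $\Ann(V_n) \subset H^1(X_n, \Sigma_n; \bC)$ with the tangent space $H^1(X, \Sigma; \bC)$ to the stratum of $(X, \omega)$; by local constancy one may regard $\xi, \xi_n \in \Ann(V)$ as elements of this common finite-dimensional vector space, in which $\xi_n \to \xi$. The convergence $(X_n, \omega_n) \to (X, \omega)$ in $\WHH_{<\infty}$ is equivalent, in period coordinates, to two statements: $\int_\gamma \omega_n \to 0$ for every vanishing cycle $\gamma \in V_n$ (which uses the finite-area hypothesis, since otherwise a core curve of a cylinder degenerating to infinite area would violate this), and $(f_n^*)^{-1}$ applied to the $\Ann(V_n)$-component of $[\omega_n]$ converges to $[\omega]$ in $H^1(X, \Sigma; \bC)$. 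Well-definedness of $(X_n, \omega_n) + \xi_n$ for large $n$ and small $\xi_n$ is immediate from local period coordinates on $\cH$. The perturbed class $[\omega_n] + \xi_n$ pairs with every vanishing cycle the same way as $[\omega_n]$ (since $\xi_n \in \Ann(V_n)$), and its $\Ann(V_n)$-component, read through $(f_n^*)^{-1}$, converges to $[\omega] + \xi$. These are exactly the two period-coordinate conditions for convergence $(X_n, \omega_n) + \xi_n \to (X, \omega) + \xi$ in $\WHH_{<\infty}$, proving the claim. The main obstacle is thus the first paragraph: promoting the sequential Mirzakhani--Wright criterion into a neighborhood-wise statement with canonically defined vanishing cycles, after which the perturbation statement follows essentially by bookkeeping.
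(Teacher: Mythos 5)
The paper does not actually prove this proposition; it is quoted verbatim from Mirzakhani--Wright and cited as \cite[Proposition 2.6]{MirWri}, so there is no in-paper proof to compare against. Judged on its own terms, your proposal has a genuine gap, and it sits exactly where you claim the argument becomes ``bookkeeping.'' You assert that ``well-definedness of $(X_n,\omega_n)+\xi_n$ for large $n$ and small $\xi_n$ is immediate from local period coordinates on $\cH$.'' It is not: local period coordinates at $(X_n,\omega_n)$ give a chart whose size a priori shrinks to zero as $(X_n,\omega_n)$ degenerates toward the boundary, and the entire content of the proposition --- as the surrounding text in the paper emphasizes --- is the \emph{uniformity} of the neighborhood of $0$ in $\Ann(V)$, i.e.\ ruling out ``the pathological possibility that the stratum could get skinnier and skinnier as it approaches the boundary point.'' Establishing that deformations specifically in the $\Ann(V_n)$ directions admit a uniform domain requires a geometric argument (deforming the thick part of the surface, which converges to $(X,\omega)$, while leaving the collapsing thin part essentially untouched); this cannot be extracted from the mere existence of period charts.

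A second, related gap: you characterize convergence in $\WHH_{<\infty}$ as equivalent to ``periods of vanishing cycles tend to $0$ and the $\Ann(V_n)$-component converges.'' That is not the definition of convergence in $\WHH$ (which is via $\BHH$, or via the flat-geometric criterion of Theorem 2.4), and the equivalence is itself a substantial theorem --- one direction is essentially what Section 7 of this paper (Theorem \ref{T:continuity}) labors to prove via the smooth compactification of \cite{lms}, and the converse direction is nowhere established. Using it as a definition makes the second paragraph circular. Your first paragraph (a neighborhood-wise collapse map, well-defined up to isotopy and automorphisms, giving a locally constant local system $V$) is reasonable in outline, modulo the caveat that the collapse maps are only canonical up to pre- and post-composition with automorphisms; but the proposition's real difficulty is the uniform perturbation statement, not the definition of $\Ann(V)$.
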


Here we use the notation ``$+\xi$" to denote the surface whose period coordinates differ from the original surface by $\xi$; this is only defined for $\xi$ small. This proposition, which appears as \cite[Proposition 2.6]{MirWri}, rules out the pathological possibility that the stratum could get skinnier and skinner as it approaches the boundary point $(X,\omega)$, so that only increasingly tiny deformations in the $\Ann(V)$ directions are well defined. 

{We end the section with a remark.}

\begin{rem}\label{R:dual}
As we hinted at in the introduction, one could state Theorem \ref{T:main} in a dual form, which would say that the space of equations defining a boundary  invariant subvariety is the image under the collapse map $(f_n)_*$ of the set of equations defining the invariant subvariety. 
\end{rem}

\section{Complex analytic spaces}\label{S:rigid}

We first recall the definition of a complex analytic space \cite{GR}.

\begin{defn} 
A $\bC$-space is a pair $(X,\cO_X)$, where $X$ is a topological space and $\cO_X$ is a sheaf of $\bC$-algebras on $X$. A morphism $(X, \cO_X)\to (Y,\cO_Y)$ of $\bC$-spaces is a continuous map $f\colon X\to Y$ together with a collection of $\bC$-algebra maps $\cO_Y(V)\to \cO_X(f^{-1}(V))$ for each open subset $V\subset Y$ that commute with restriction maps. The morphism is called an isomorphism if $f$ is a homeomorphism and all the maps  $\cO_Y(V)\to \cO_X(f^{-1}(V))$ are isomorphisms. 
\end{defn}

\begin{defn} 
A $\bC$-space $(X,\cO_X)$ is a complex analytic space if it is locally isomorphic to the vanishing locus of a finite collection $f_1, \ldots, f_k$ of holomorphic functions defined on an open subset $U$ of some affine space $\bC^n$, equipped with the sheaf of holomorphic functions on $U$ modulo the ideal $(f_1, \ldots, f_k)$. A morphism of complex analytic spaces is a morphism of their underlying $\bC$-spaces where the maps on sheaves are given by pull back of functions. 
\end{defn}

\begin{ex} 
Every quasi-projective variety over $\bC$ is in particular a complex analytic space. A morphism of quasi-projective varieties is in particular a morphism of complex analytic spaces. 
\end{ex}

\begin{lem}\label{L:rigidity}
{Let $M$ and $N$ be connected complex manifolds such that $N$ is compact}, and let $X$ be a complex analytic space.  
If a morphism $f\colon M\times N \to X$ maps $\{m\}\times N$ to a point for some $m\in M$, then $f$ factors through the projection $M\times N \to M$. 
\end{lem}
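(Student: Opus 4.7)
The plan is a standard open-closed argument: show that the set
$$S \;=\; \{\,m' \in M : f|_{\{m'\}\times N} \text{ is constant}\,\}$$
equals $M$. Since $S$ is nonempty by hypothesis and $M$ is connected, it suffices to prove $S$ is open and closed. Granted $S = M$, one defines $\tilde f\colon M \to X$ by choosing any $n_0 \in N$ and setting $\tilde f(m) = f(m, n_0)$; this is well-defined, continuous, and the sheaf maps for $\tilde f$ are induced from those for $f$ via the inclusion $M \hookrightarrow M \times N$, which gives the desired factorization.

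Closedness of $S$ is essentially by continuity: if $m_k \to m_0$ with $m_k \in S$ and $n_0 \in N$ is fixed, then for every $n \in N$ we have $f(m_k, n) = f(m_k, n_0) \to f(m_0, n_0)$, and also $f(m_k, n) \to f(m_0, n)$, so $f(m_0, n) = f(m_0, n_0)$. The main content is openness; this is where the hypothesis that $X$ is a complex analytic space (and $N$ is compact connected) is used. Given $m_0 \in S$ with $f(\{m_0\}\times N) = \{p\}$, I would choose a chart around $p$ realizing an open neighborhood of $p$ in $X$ as the vanishing locus of finitely many holomorphic functions in an open $W \subset \mathbb{C}^k$. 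By continuity and compactness of $N$, there is a neighborhood $U$ of $m_0$ in $M$ with $f(U \times N) \subset W \cap X$. The pullbacks of the coordinate functions $z_1,\dots,z_k$ under $f$ are then holomorphic functions $g_1,\dots,g_k\colon U\times N \to \mathbb{C}$. For each fixed $m' \in U$, the restriction $g_i|_{\{m'\}\times N}$ is a holomorphic function on the compact connected complex manifold $N$, hence constant. Thus $f(m', n)$ has coordinates independent of $n$, so $f|_{\{m'\}\times N}$ is constant, giving $U \subset S$.

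The only genuine subtlety is the sheaf-theoretic bookkeeping: I need to know that pullback under a morphism of $\mathbb{C}$-spaces sends the restrictions of the ambient coordinate functions $z_i$ (which are holomorphic functions on a neighborhood of $p$ in $X$) to actual holomorphic functions on an open subset of $M \times N$. This follows immediately from the definition of a morphism of complex analytic spaces, since maps of sheaves are realized by pullback of holomorphic functions and $M \times N$ is a manifold (so its structure sheaf really is the sheaf of holomorphic functions). Once this is in place the argument is routine, and the hard step, verifying openness, is reduced to the one-variable rigidity fact that holomorphic functions on a compact connected complex manifold are constant.
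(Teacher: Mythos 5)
Your proof is correct and follows essentially the same route as the paper's: an open--closed argument on the set $S$ of fibers $\{m'\}\times N$ that are collapsed, with openness reduced to the fact that a holomorphic function on a compact connected complex manifold is constant, using local holomorphic functions on $X$ that separate points. The paper phrases the openness step as a contradiction at a boundary point of $S$ and invokes point-separation abstractly rather than via the coordinate functions of a local model, but this is only a cosmetic difference.
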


A version of this result, sometimes called the Rigidity Lemma, holds for algebraic varieties. Presumably Lemma \ref{L:rigidity} is also known, but we do not know a reference. 

\begin{proof}
Let $S$ be the set of $m\in M$ such that $\{m\}\times N$ is mapped to a point. Note that $S$ is closed in $M$. 

{Since $M\times N$ is connected, the image of $f$ is contained in a connected component of $X$. Hence without loss of generality we can assume that $X$ is connected. }
We also assume that $X$ is not a point, for otherwise the claim is trivial. 

Suppose in order to find a contradiction that we can pick $m_0\in \partial S$. 
Since $X$ is a  complex analytic space, every point in $X$ has a neighborhood $U$ such that for any pair of distinct points $p,q\in U$ there exists $g\in \cO_X(U)$ (i.e. a holomorphic function $g\colon U\to \bC$) such that $g(p)\neq g(q)$. Let $U$ be such a neighborhood of the point $f(\{m_0\}\times N)$. The open set $f^{-1}(U)$ contains $\{m_0\}\times N$. Since $N$ is compact, we can find an open set $U_M$ of $M$ containing $m_0$ such that $U_M \times N \subset f^{-1}(U)$. Pick $m_1\in U_M\setminus S$, and pick $n_1, n_2\in N$ such that $f(m_1, n_1)\neq f(m_1, n_2)$. Choose $g\in \cO_X(U)$ such that $g\circ f(m_1, n_1)\neq g\circ f(m_1, n_2)$. The function $g\circ f$ restricted to $\{m_1\}\times N$ is a non-constant holomorphic function on a connected compact complex manifold, which is a contradiction. 
\end{proof}

\begin{proof}[Proof of Theorem \ref{T:notalg}] 
We first prove the claim for each stratum, regardless of whether it is connected or not, and then later we will address the individual connected components. 

Let $\cH(\kappa_1, \ldots, \kappa_s, -2, -2)$ be the stratum of meromorphic differentials that have zeros $z_i$ of type $\kappa$ and two double poles $p_1$ and $p_2$. As long as $\kappa \neq (2)$, by \cite{GenTah} there exists a differential $(C, \xi)$ in this stratum such that $\xi$ has zero residue at $p_1$ and $p_2$.  Attach $C$ to a fixed genus one curve $E_1$ at $p_1$, where $E_1$ carries a fixed non-trivial differential $\omega_1$.  Next take a pencil $B\cong \bP^1$ of plane cubics $E_{b}$,  
where $b\in B$ parametrizes the elliptic curves in the family, and attach to $p_2$ the curve $E_b$. Then we obtain a family of genus $g$ nodal curves parameterized by $B$; see Figure~\ref{fig:family}.
\begin{figure}[ht!]
\includegraphics[width=.4\linewidth]{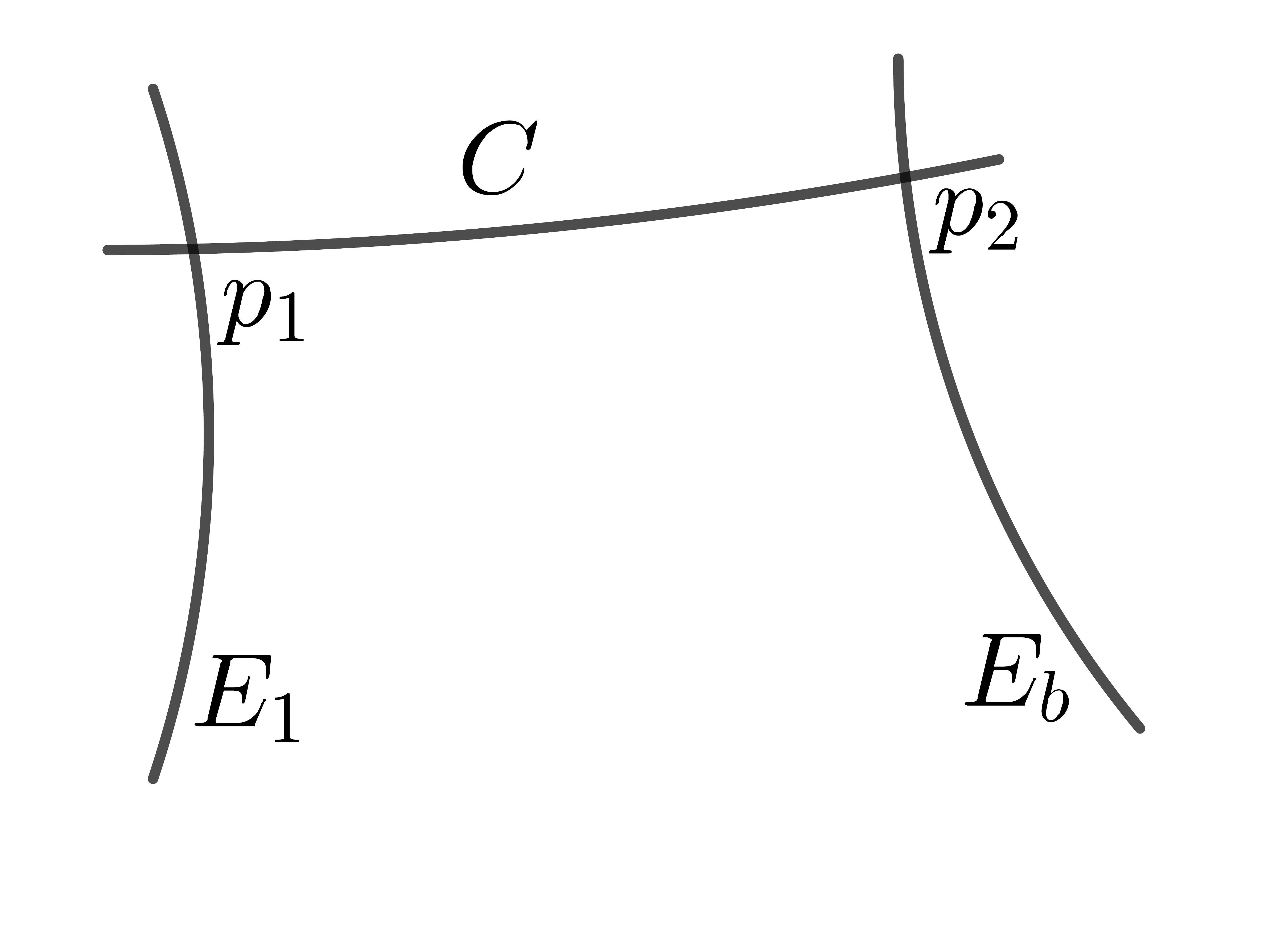}
\caption{\label{fig:family} The underlying nodal curves in the construction}
\end{figure}
Note that the Hodge bundle $\BHH$ restricted to $B$ is isomorphic to $\cO_{\bP^1}(1) \oplus \cO_{\bP^1}^{\oplus (g-1)}$, 
as $C$ and $E_1$ do not vary and in the case of genus one the Hodge bundle on a pencil of plane cubics has degree one; see e.g. \cite[Chapter 3 F]{HarrisMorrison}.  
Let $\sigma$ be a non-trivial global section of the $\cO(1)$ part. Then $\sigma$ has only one zero at a point $b_0\in B$. Let $\omega_{b} = \sigma (b)$ be the corresponding differential on $E_b$. It follows that $\omega_b$ is non-trivial on all $E_b$ but $E_{b_0}$ (as $\sigma(b_0)$ is zero). 

Let $[u, v]$ be the homogeneous coordinates of $\bP^1$. For each fixed value of $[u,v] \neq [0,1]$, define a one-parameter family of stable differentials   
given by 
$$(E_1, u\omega_1), \ (C, 0), \ (E_{b},  v \omega_{b})$$ 
where $b$ varies in $B$. Since the scalings of $\omega_1$ and $\omega_b$ depend on $[u,v]$, we denote by 
$B_{[u, v]}$ the base for the parameterization of the above family.   
By the zero residue assumption 
at $p_1$ and $p_2$, we know that stable differentials parameterized by $B_{[u, v]}$ are contained in $\bP\BHH(\kappa)$ according to \cite{Many} (as the global residue condition therein holds). We require $[u,v] \neq [0,1]$ because the corresponding differential at $b_0$ is identically zero on every component of the underlying curve (hence not well-defined in $\bP\BHH$). Now varying $[u,v]$ as well in $\bC\cong \bP^1 \setminus [0,1]$, the union of these $B_{[u, v]}$ forms a complex two-dimensional family isomorphic to $\bC \times \bP^1$, where the base $\bC$ has coordinates $[u,v]$ and the fiber $\bP^1$ over $[u,v]$ corresponds to $B_{[u,v]}$. Note that $B_{[u,v]}$ is contracted by $\pi$ if and only if $[u, v] = [1,0]$, as it gets contracted if and only if the differentials on $E_b$ are identically zero. We thus conclude the proof by using Lemma \ref{L:rigidity}, where in that context $M = \bC$, $N = \bP^1$, $X = \bP\WHH(\kappa)$, and  $m = [1,0]\in M$.

If $\cH(\kappa)$ is disconnected, then the above proof also applies to each connected component of $\cH(\kappa)$. For example, consider 
$\cH(2g-2)$, which has three connected components 
$\cH(2g-2)^{\hyp}$, $\cH(2g-2)^{\even}$ and $\cH(2g-2)^{\odd}$ when $g > 3$. If the genus $g-2$ curve 
$C$ is hyperelliptic and if the unique zero $z$ and the two poles $p_1$, $p_2$ on $C$ are all Weierstrass points (or $2$-torsion to each other if $C$ is of genus one), then the constructed family $B_{[u, v]}$ lies in $\bP\BHH(2g-2)^{\hyp}$. For the spin components, since the nodal curves in the family are of compact type, their spin parity will be determined by the parity of $C$ only, as the parity of the two elliptic tails does not vary. Hence one can take the underlying curve $(C, z, p_1, p_2)$ from a meromorphic differential in the respective spin components of $\cH(2g-2, -2, -2)$ according to \cite{KZ:comps, Boissy}.  The other cases are similar and we omit the details.  
\end{proof}

\begin{rem}
When $\bP\cH(\mu)$ is disconnected, the closures of its connected components in the Hodge bundle or in the Deligne-Mumford moduli space may intersect each other in the boundary. See \cite{Chen, Gen} for some examples. Nevertheless, one can add log structures to help distinguish the components in the boundary \cite{chen2016spin}. 
\end{rem}

\begin{rem}
The above proof does not apply to the remaining case $\kappa = (2)$ in genus two, because there is no differential on $\bP^1$ with a unique zero and at least two poles such that all residues are zero (see \cite[Lemma 3.6]{Many} and \cite{GenTah}). 
\end{rem}

\begin{rem}
Following the previous remark, indeed $\bP\WHH(2)$ can be given a natural variety structure such that it is compatible with the other algebraic compactifications.  
To see this, we slightly abuse notation to denote by $\bP\BHH(2)$ the compactification in \cite{Many}. Namely, we mark the unique zero $z$ and take the closure 
of $\bP \HH(2)$ in the projectivized Hodge bundle over $\BM_{2,1}$. Note that $\bP\BHH(2)$ can be identified via hyperelliptic (admissible) double covers with the moduli space $\BM_{0,1;5}$ of stable rational curves with six marked branch points where one of the markings is distinguished and the others are unordered, and the distinguished marking $z_1$ corresponds to the image of the unique double zero $z$ in the domain of the covers.

If $(X, \omega, z)$ in $\bP\BHH(2)$ has a genus one component $C$ containing $z$, then $\omega$ is identically zero on $C$.  This is because $\omega$ restricted to $C$ must have a double zero (at $z$), and can have at most a single simple pole (at the node), and no such non-zero $\omega$ exists. 
Under the hyperelliptic (admissible) cover, the image of $C$ in the target is a rational tail that contains the distinguished marking $z_1$ and two other unordered markings. Hence forgetting the moduli of $C$ is the same as forgetting the moduli of the image  
 rational tail that contains $z_1$ and two other markings (where the moduli corresponds to the cross ratio of the three markings together with the attaching node). By examining the boundary strata under the map $\pi\colon \bP\BHH(2) \to \bP\WHH(2)$, one can see that the above phenomenon is the only difference between the two spaces.

Let $\BM_{0,\cA}$ be the Hassett weighted moduli space $\BM_{0,\cA}$ \cite{Hassett} where $\cA$ assigns weight $\epsilon$ to $z_1$ for $0< \epsilon \ll 1$ and assigns weight $\frac{1}{2} - \epsilon$ to the other markings. The contraction morphism $f\colon\BM_{0,1;5} \to \BM_{0,\cA}$ exactly forgets the moduli of a rational tail that contains $z_1$ and two other markings, since the sum of their weights is less than one.  Hence we can identify $\bP\WHH(2)$ with $\BM_{0,\cA}$ and identify the map 
$\pi$ with $f$, where $\BM_{0,\cA}$ is an algebraic variety and $f$ is an algebraic morphism.  
\end{rem}

\section{Cautionary examples}\label{S:caution}

Our main cautionary example concerns a discontinuity in the behavior of periods for a certain straightforward degeneration of translation surfaces. Theorem \ref{T:main} shows in particular that limits of surfaces in $\cM$ satisfy limiting equations on their periods, and our main cautionary example shows that this is far from automatic.

Consider surfaces as in Figure~\ref{F:Discontinuous} that result from gluing in two horizontal cylinders into slits on a surface in $\cH(2)$. 
\begin{figure}[ht!]
\includegraphics[width=\linewidth]{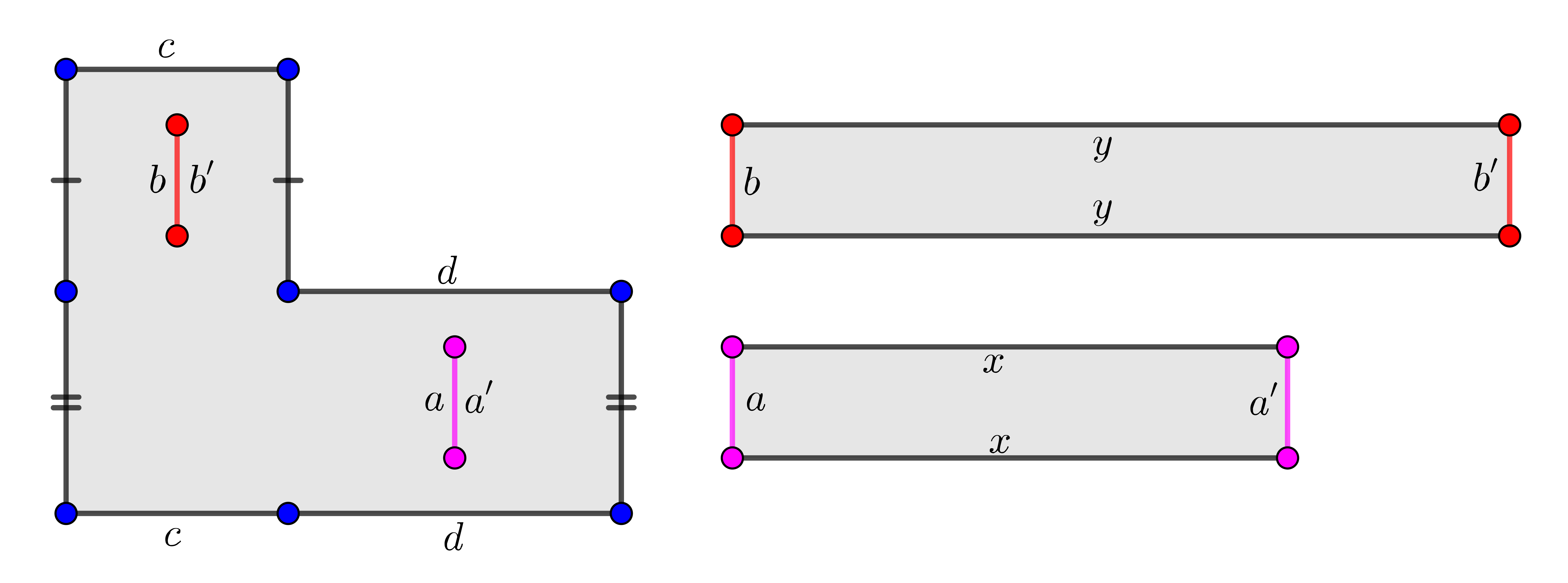}
\caption{A surface with cylinders of large modulus}
\label{F:Discontinuous}
\end{figure}
We will consider a family $(X_\e, \omega_\e)$ of such surfaces, parameterized by $\e \in (0, \e_0)$, such that 
$$a=a'=b=b'=i\epsilon$$ 
and 
$$x=\frac1\e, \quad\quad y-x=d-c.$$
Here we are using the label of the edge to also refer to the period coordinate of that edge.  All other edges periods are constant along the family, and we assume $c\neq d$. 

 Let $(X_0, \omega_0)$ denote the limit in $\WHH$, and define the periods $x,y$ etc on the limit by pushing forward the relative homology classes and then taking the period on the limit $(X_0, \omega_0)$. 

All of these surfaces  $(X_\e, \omega_\e)$ satisfy the equation on period coordinates $y-x=d-c$. On the limit surface, the periods $x$ and $y$ are now zero, and since $c$ and $d$ are constant and $c\neq d$ we have 
$y-x\neq d-c$. That is, the limit surface does not satisfy the limit equation, even though the equation holds identically on the degenerating family $(X_\e, \omega_\e)$! 

In our situation, we worry that there might be some invariant subvariety $\cM$ containing all $(X_\e, \omega_\e)$ and defined by the equation $y-x=d-c$ in local period coordinates. The existence of such an $\cM$ would contradict Theorem \ref{T:main}. 
This contradiction is most immediate from the point of view of Remark \ref{R:dual}, because the limit surface does not satisfy the limit equation. (One can also see the contradiction by noting that the tangent direction to the boundary orbit closure $\cM'\ni (X_0, \omega_0)$ resulting from scaling $\omega_0$ would not lie in $T_{(X_\e, \omega_\e)}(\cM)$, because that scaling changes $c-d$ without changing $x$ or $y$.)  

However, it is easy see that no such $\cM$ exists, as follows.

\begin{lem}
There does not exist an invariant subvariety $\cM$ containing all $(X_\e, \omega_\e)$ such that the equation $y-x=d-c$  holds locally on $\cM$.
\end{lem}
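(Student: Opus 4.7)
The plan is to derive a contradiction by combining the explicit degeneration of the family $(X_\e, \omega_\e)$ with Theorem~\ref{T:main}. First I would observe that as $\e \to 0$, the family converges in $\WHH_{<\infty}$ to the limit $(X_0, \omega_0)$ described in the preceding paragraphs: the two horizontal cylinders degenerate (their moduli tend to zero while their areas stay bounded at $1$), so under the collapse map the cycles $[x], [y]$ (together with $[a], [a'], [b], [b']$) become vanishing cycles, i.e.\ they lie in $V_n = \ker((f_n)_*)$. On the other hand, $[c]$ and $[d]$ are supported on the non-degenerating part of the surface and push forward to cycles on $(X_0, \omega_0)$ whose periods remain the original values $c$ and $d$, with $c \ne d$.

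Suppose for contradiction that such an invariant subvariety $\cM$ exists. The assumption that $y - x = d - c$ holds locally on $\cM$ translates to the statement that the class $\alpha := [y] - [x] - [d] + [c] \in H_1(X_\e, \Sigma; \bR)$ lies in the annihilator of $T_{(X_\e, \omega_\e)} \cM$ for every sufficiently small $\e$. By Remark~\ref{R:dual}, the pushforward $(f_n)_*(\alpha)$ must lie in the space of equations defining a boundary invariant subvariety $\cM' \subset \partial\cM_{<\infty}$ through $(X_0, \omega_0)$. Since $[x], [y] \in V_n$, this pushforward simplifies to $(f_n)_*(\alpha) = [c] - [d]$, which would impose the equation $c - d = 0$ locally on $\cM'$. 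However by construction $c \ne d$ on $(X_0, \omega_0)$, while $(X_0, \omega_0) \in \cM'$ by Theorem~\ref{T:main}, giving the required contradiction.

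Equivalently, one can phrase the contradiction via tangent spaces, as in the parenthetical remark preceding the lemma: $GL^+(2, \bR)$-invariance of $\cM'$ forces $T_{(X_0, \omega_0)} \cM'$ to contain the scaling tangent direction at $(X_0, \omega_0)$, whose lift under $f^*$ has zero $[x]$- and $[y]$-components but $[c]$- and $[d]$-components equal to $c$ and $d$. Pairing with $\alpha$ yields $c - d \ne 0$, so this lifted class fails to lie in $T_{(X_\e, \omega_\e)} \cM$, directly contradicting the conclusion of Theorem~\ref{T:main}.

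The main point to verify carefully is that Theorem~\ref{T:main} (equivalently Remark~\ref{R:dual}) actually applies to this degeneration: one must confirm that the cylinder-collapse geometry genuinely places $[x], [y]$ in $V_n$ and that $(X_0, \omega_0)$ is the appropriate finite area boundary point of $\cM$. Both are ensured by the explicit construction of the family and the definition of convergence in $\WHH_{<\infty}$. Once these are in place, the computation of $(f_n)_*(\alpha) = [c] - [d]$ and the resulting failure of the equation on $(X_0, \omega_0)$ are immediate.
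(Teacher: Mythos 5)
Your argument is the one the paper itself sketches in the paragraph \emph{preceding} the lemma, where it is offered as the reason to be worried, not as the proof: the whole point of this cautionary example is that \emph{if} such an $\cM$ existed, it would contradict Theorem \ref{T:main} (equivalently Remark \ref{R:dual}). The lemma exists precisely to verify, by means independent of Theorem \ref{T:main}, that this potential counterexample does not materialize. Deducing the lemma \emph{from} Theorem \ref{T:main} is not formally circular (that theorem is proved later without using this lemma), but it renders the lemma vacuous as a consistency check and is not what the paper intends by ``it is easy to see that no such $\cM$ exists.'' The paper's proof is entirely different and elementary: it invokes the Cylinder Deformation Theorem of \cite{Wcyl}. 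Since all $(X_\e,\omega_\e)$ lie in $\cM$, the two degenerating cylinders cannot be $\cM$-parallel to any other cylinder, so their union $\cC$ is a union of equivalence classes of $\cM$-parallel cylinders; the deformation $a_t^{\cC}$ therefore stays in $\cM$, and it scales $x$ and $y$ while fixing $c$ and $d$, violating $y-x=d-c$ since $c\neq d$. That argument uses only the prior result from \cite{Wcyl} and nothing from the present paper.

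Two smaller points. First, you assert the cylinders' ``moduli tend to zero''; in fact they tend to infinity (circumference $\e\to 0$, transverse extent $1/\e\to\infty$, area constant), which is exactly why the core curves are pinched and the cylinders vanish in the $\WHH$ limit --- with moduli tending to zero nothing would degenerate. Second, the paper only asserts that the \emph{periods} of the pushed-forward classes $[x],[y]$ vanish on the limit, not that $[x],[y]$ lie in $V_n=\ker((f_n)_*)$; your computation of the limiting equation survives either way, but the stronger claim would need justification. Neither point affects the logic of your deduction from Theorem \ref{T:main}; the substantive issue is that the deduction runs in the wrong direction for the purpose this lemma serves.
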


\begin{proof}
Otherwise, the Cylinder Deformation Theorem of \cite{Wcyl}, stated as Theorem \ref{T:CDT} below, would give a deformation that scaled $x$ and $y$ while fixing $c$ and $d$. This contradicts the equation $y-x=d-c$  because we have assumed that $c-d\neq 0$.

We now give more details on this argument, using the notation and terminology defined shortly in Section \ref{S:parallel}. Because all $(X_\e, \omega_\e)\in \cM$, the two vertical cylinders bounded by $\{a, a'\}$ and $\{b, b'\}$ respectively cannot be $\cM$-parallel to any other cylinder.  Hence, the union $\cC$  of these two cylinders is either a single equivalence class of $\cM$-parallel cylinders, or the union of two equivalence classes. In either case, the cylinder deformation $a_t^\cC(X,\omega)$ must remain in $\cM$. But this deformation scales $x$ and $y$ while leaving $c$ and $d$ constant. 
\end{proof}

Our next cautionary example is even more basic. Consider the quasi-projective variety $\cM=\{(x,y,t): y^2=x^3+t, t\neq 0\}$, whose closure in affine three-space is given by $\cM\cup \partial \cM$, where  $\partial\cM=\{(x,y,0): y^2=x^3\}$. If one defines the tangent space as the space of derivatives of smooth maps from an interval in $\bR$ into the variety, than the tangent space to $\partial\cM$ at $(0,0,0)$ is zero dimensional; it does not contain the limit of tangent spaces to $\cM$ along a sequence of points converging to $(0,0,0)$. In our situation, this relates to the possible worry that the tangent space to the boundary orbit closure might be smaller than expected. Keep in mind that we do not have any results on whether the closure of an orbit closure in any strata compactification is smooth. But in our situation we at least know that the relevant part of $\partial\cM$ is a properly immersed smooth orbifold, which may account for why this possible worry is comparatively easier to rule out. 

\section{Cylinder deformations}\label{S:parallel}

Recall that two cylinders on $(X,\omega)\in \cM$,\footnote{This notation works well if $\cM$ is an embedded manifold, but in reality $\cM$ may have self-crossings. In general $\cM$ is a manifold which is immersed into the stratum, and it is an abuse of notation to write $(X,\omega)\in \cM$. It would be more precise to write $(X,\omega, V)\in \cM$, where $V\subset H^1(X,\Sigma, \bC)$ is the image of the tangent space to $\cM$ in the stratum. Typically $V=T_{(X,\omega)} \cM$ is determined by $(X,\omega)$, but at points of self-crossing there many be finitely many choices of $V$. The notion of $\cM$-parallel of course depends on this choice. As is typical, we now continue with the abuse of notation.} with core curves $\gamma_1$ and $\gamma_2$, are said to be $\cM$-parallel if there is some $c\in \bR$ such that $\int_{\gamma_1}\omega = c \int_{\gamma_2}\omega$ holds on a neighborhood of $(X,\omega)$ in $\cM$. We will call $c$ the ratio. 

Given a cylinder $C$ on $(X,\omega)$ with an orientation of its core curve, define $u_t^C(X,\omega)$ (resp. $a_t^C(X,\omega)$) to be the result of rotating the surface so $C$ is horizontal and the period of the core curve is positive, applying $u_t$ (resp. $a_t$) just to $C$, and applying the inverse rotation. Here 
$$u_t=\left( \begin{array}{cc} 1 & t \\ 0 & 1\end{array}\right) \quad\text{and}\quad a_t=\left( \begin{array}{cc} 1 & 0 \\ 0 & e^{t}\end{array}\right).$$ 
Define an orientation on a collection of parallel cylinders to be a choice of orientation on each core curve $\gamma_i$; say that the orientation is consistent if the integrals $\int_{\gamma_i} \omega$ are positive multiples of each other.

Given an equivalence class $\cC=\{C_1, \ldots, C_n\}$ of consistently oriented $\cM$-parallel cylinders, we define $u_t^\cC(X,\omega) = u_t^{C_1} \circ \cdots \circ u_t^{C_n}(X,\omega)$, and similarly for $a_t^\cC$. The main result from \cite{Wcyl} gives the following.

\begin{thm}[Cylinder Deformation Theorem]\label{T:CDT}
If $\cC$ is an equivalence class of $\cM$-parallel cylinders on $(X,\omega)\in \cM$, then $$a_s^\cC(u_t^\cC(X,\omega))\in \cM$$
for all $s,t\in \bR$. 
\end{thm}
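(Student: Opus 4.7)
The plan is to prove the statement infinitesimally and then integrate. Specifically, it suffices to show that at every $(X',\omega') \in \cM$ sufficiently close to $(X,\omega)$ with the same cylinder structure persisting, the initial velocities of $t \mapsto u_t^{\cC}(X',\omega')$ and $s \mapsto a_s^{\cC}(X',\omega')$ lie in $T_{(X',\omega')}\cM$. Since $\cM$ is locally cut out by $\bR$-linear equations in period coordinates by \cite{EM, EMM}, and since $u_t^\cC$ and $a_s^\cC$ change periods along affine lines in these coordinates, the infinitesimal statement integrates to the global conclusion via a standard flow and continuation argument along a path inside $\cM$.

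By the $GL^+(2,\bR)$-invariance of $\cM$, and in particular its $\bC^*$-invariance, the tangent space $T_{(X,\omega)}\cM$ is a $\bC$-linear subspace of $H^1(X,\Sigma;\bC)$. After rotating $\cC$ so that its cylinders are horizontal, the infinitesimal generators of $a_s^\cC$ and $u_t^\cC$ in period coordinates are related by multiplication by $i$. Hence it is enough to exhibit one of them—say the infinitesimal twist—as an element of $T_{(X,\omega)}\cM$.

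To produce it, I would apply the horocycle flow $u_t$ globally to $(X,\omega)$: this yields a path in $\cM$, so its velocity vector lies in $T_{(X,\omega)}\cM$. This global velocity decomposes naturally as a sum of ``local'' pieces, one for each equivalence class of $\cM$-parallel horizontal cylinders, together with a contribution from the non-cylindrical part of the surface. The definition of an equivalence class is precisely that core-curve periods of cylinders in one class are $\bR$-linearly related on $\cM$, while core-curve periods across distinct classes are $\bR$-linearly independent on $\cM$. This maximality forces the $\bR$-linear equations defining $\cM$ to respect the decomposition: otherwise one could extract a new equation on $\cM$ coupling $\cC$ to another class, contradicting maximality. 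Consequently each piece—and in particular the $\cC$-piece—lies individually in $T_{(X,\omega)}\cM$, and this piece is (up to scalar) the infinitesimal twist $u_t^\cC$.

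The main obstacle is making this ``decomposition into local pieces'' rigorous. Cylinders in $\cC$ interact with the rest of the surface through saddle connections crossing them, and one must treat the non-cylindrical complement carefully. The technical core is a linear-algebraic argument showing that the projection of $T_{(X,\omega)}\cM$ onto the span of core-curve dual classes of $\cC$ is saturated in the expected way, so that contributions from other equivalence classes can be subtracted off while remaining inside $T_{(X,\omega)}\cM$, and that the resulting tangent vector agrees with the infinitesimal generator of $u_t^\cC$ (and hence, after multiplication by $i$, of $a_s^\cC$) exactly, rather than up to some ambient deformation of the stratum.
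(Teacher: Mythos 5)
First, a point of reference: the paper does not prove this statement at all --- it is quoted as the main result of \cite{Wcyl} and used as a black box --- so your proposal can only be measured against the proof in \cite{Wcyl} itself. Your sketch reproduces the correct outer shell of that proof: reduce $a_s^\cC$ to $u_t^\cC$ via the $\bC$-linearity of $T_{(X,\omega)}\cM$ in period coordinates, start from the global horocycle derivative $\Im(\omega)\in T_{(X,\omega)}\cM$, and try to isolate the piece supported on $\cC$.

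However, the step you summarize as ``maximality forces the linear equations defining $\cM$ to respect the decomposition'' is a genuine gap, and it is precisely where the content of \cite{Wcyl} lies. Being $\cM$-parallel is only a \emph{pairwise} proportionality condition on core-curve periods; it does not exclude a linear relation on $\cM$ coupling core curves from three or more distinct equivalence classes with no two proportional, it says nothing about how the annihilator of $T\cM$ mixes core-curve duals with relative classes, and it gives you no way to subtract off the part of $\Im(\omega)$ carried by minimal components of the horizontal foliation, which is not supported on any cylinder. Closing these gaps requires two inputs your sketch does not supply: (i) a Smillie--Weiss type equidistribution argument for the horocycle flow on the torus of cylinder twists, applied first on the (dense) horizontally periodic surfaces, to dispose of the non-cylindrical contribution and to show that the twist space meets $T\cM$ nontrivially at all; and (ii) the symplecticity and self-duality of $p(T\cM)$ (Avila--Eskin--M\"{o}ller), which is what proves the identity between the twist space and the cylinder-preserving space and hence the ``saturation'' of the projection onto the span of the core-curve duals of a single equivalence class. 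You correctly flag this saturation as ``the technical core,'' but flagging it is not proving it. Finally, your integration step also needs the (true, but unstated) fact that along $a_s^\cC u_t^\cC(X,\omega)$ the cylinders of $\cC$ persist for all time and that the set of parameters where the conclusion holds is both open and closed.
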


The rest of this section is devoted to the following theorem, which may be viewed as a black box. It will be used to avoid the situation of the first cautionary example. 

\begin{thm}\label{T:deform}
For each invariant subvariety $\cM$, and each sufficiently large real number $M$, there is some $M'>M$ such that for all  $(X,\omega)\in \cM$, there exists $k\geq 0$, a collection of disjoint cylinders $E_1, \ldots, E_k$ of modulus at least $M$, and $(t_1, \ldots, t_k) \in \bR^k$, such that the path 
$$a_{t\cdot t_1}^{E_1} \circ \cdots \circ a_{t\cdot t_k}^{E_k}(X,\omega), \quad t\in [0,1]$$
is in $\cM$, and at the end of the path all the $E_i$ have modulus at least $M$ and all cylinders on the surface have modulus at most $M'$.
\end{thm}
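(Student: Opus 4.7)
The approach is to use the Cylinder Deformation Theorem (Theorem~\ref{T:CDT}) to uniformly shrink each $\cM$-parallel equivalence class of cylinders that contains an unusually tall cylinder. Given $(X,\omega)\in \cM$, I would partition the cylinders of modulus at least $M$ into their $\cM$-parallel equivalence classes, and let $\cC_1,\ldots,\cC_s$ be those classes whose maximum cylinder modulus exceeds a threshold $M'$ that will be chosen uniformly in $\cM$. For each such class $\cC_j$, apply the uniform scaling $a_{s_j}^{\cC_j}$ with $s_j\leq 0$ chosen so that the maximum modulus in $\cC_j$ after the full deformation lies in $[M,M']$. Take $\{E_1,\ldots,E_k\}$ to be the individual cylinders comprising $\cC_1\cup\cdots\cup\cC_s$, and set $t_i=s_j$ for each $E_i\in \cC_j$.

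Since disjoint cylinder deformations commute, the composition $a_{tt_1}^{E_1}\circ\cdots\circ a_{tt_k}^{E_k}$ equals $\prod_{j=1}^{s} a_{ts_j}^{\cC_j}$ for every $t\in[0,1]$, and by Theorem~\ref{T:CDT} each factor preserves $\cM$, so the whole path stays in $\cM$. At $t=1$, every cylinder in a deformed class has modulus at most $M'$, cylinders outside the deformed classes already have modulus below $M'$ by the choice of tall classes, and each $E_i$ retains modulus at least $M$ by construction of the $s_j$. Thus both endpoint conditions are met.

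The main obstacle is choosing $M'$ so this construction works uniformly over $\cM$. This reduces to establishing a uniform bound $R$, depending only on $\cM$, on the ratio $(\max\mathrm{modulus})/(\min\mathrm{modulus})$ within any $\cM$-parallel class on any surface in $\cM$; one can then set $M'=R\cdot M$. Indeed, scaling a class by $e^{s_j}$ uniformly, the feasibility conditions $e^{s_j}\max\leq M'$ and $e^{s_j}\min\geq M$ admit a common $s_j$ exactly when $\max/\min\leq M'/M$. I would prove the existence of such $R$ by contradiction using compactness in the WYSIWYG compactification: if no such $R$ existed, one could find a sequence $(X_n,\omega_n)\in\cM$ with an $\cM$-parallel class on each surface whose modulus ratio diverges. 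After normalizing area and passing to a subsequence, extract a limit in $\WHH_{<\infty}$ in which the short cylinders of the divergent class collapse while the tall ones persist. The linear absolute-period relations enforcing $\cM$-parallelism descend under the collapse map, and by Theorem~\ref{T:main} (together with Filip's algebraicity of $\cM$) the limiting boundary invariant subvariety in $\partial\cM_{<\infty}$ must satisfy the pushed-forward relations. The tension between these relations and the observed collapse pattern of short cylinders in a class whose long cylinders survive should produce the contradiction. This compactness step is the most delicate part of the argument and is where all the structural input on orbit closures is used.
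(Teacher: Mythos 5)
Your overall strategy (shrink the over-tall cylinders class by class using cylinder deformations, with a threshold $M'$ chosen uniformly over $\cM$) is the right one, but the statement you reduce to is false, and the argument you sketch for it is circular. There is no uniform bound $R$ on the ratio $(\max\ \mathrm{modulus})/(\min\ \mathrm{modulus})$ within an $\cM$-parallel class. The Cylinder Finiteness Theorem bounds the ratios of \emph{circumferences} of $\cM$-parallel cylinders, but it places no uniform constraint on the ratio of their heights, hence none on the ratio of their moduli. Already for $\cM$ a full stratum, two homologous cylinders form an $\cM$-parallel class with equal circumferences but independently variable heights, so one can have modulus $1$ while the other has modulus $10^{6}$. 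Consequently a class $\cC_j$ containing a cylinder of modulus $>M'$ may also contain cylinders of modulus $<M$; your uniform scaling $a_{s_j}^{\cC_j}$ then either forces you to include those small cylinders among the $E_i$ (violating the requirement that each $E_i$ have modulus at least $M$ at both ends of the path, and potentially the disjointness of the $E_i$, since only cylinders of large modulus are automatically disjoint) or to omit them, in which case the deformation is no longer the full class deformation and the Cylinder Deformation Theorem no longer guarantees you stay in $\cM$. Separately, your compactness argument invokes Theorem \ref{T:main}, but Theorem \ref{T:deform} is an ingredient in the proof of Theorem \ref{T:main} (via Lemma \ref{L:deformlimit} and Corollary \ref{C:suffices}), so that route is circular; the proof must rest only on the cylinder machinery.

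The repair is to deform \emph{non-uniformly} within each class, changing only the tall cylinders. This is exactly what the second half of the Cylinder Finiteness Theorem provides: the admissible height deformations of an equivalence class form a linear subspace drawn from a finite list depending only on $\cM$. One then needs the elementary Lemma \ref{L:LinAlg}: for a linear subspace $V\subset\bR^n$ and $H>1$ there is $H'$ such that any $v\in V\cap[0,\infty)^n$ can be replaced by $v'\in V$ agreeing with $v$ in all coordinates below $H$ and with the remaining coordinates in $[H,H']$. Applying this to the height vector of a class, normalized by the circumference of a reference cylinder and with $H=RM$ where $R$ bounds circumference ratios, one deforms only the cylinders of normalized height above $RM$ --- each of which has modulus at least $M$ --- landing their moduli in $[M, H'R]$ while leaving the short cylinders untouched; taking $M'=H'R$ and noting that cylinders of modulus above a universal constant are disjoint completes the argument.
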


The cylinders $E_1, \ldots, E_k$ need not be parallel. 
If $(X,\omega)$ has no cylinders of modulus greater than $M'$, we allow $k=0$, so no cylinder deformation is required, and the constant path gives the result. 

Theorem \ref{T:deform}  is closely related to the Cylinder Finiteness Theorem \cite[Theorem 5.1]{MirWri}, and in fact we use the following version of this result. 

\begin{thm}[Cylinder Finiteness Theorem]\label{T:CDT}
For any $\cM$, the set of ratios of $\cM$-parallel cylinders is finite. Furthermore, the set of equations which restrict the heights of $\cM$-parallel cylinders is also finite.
\end{thm}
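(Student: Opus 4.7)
The plan is to combine Filip's theorem that $\cM$ is an algebraic subvariety defined over a fixed number field $K$ with a compactness argument on the unit-area locus of $\cM$. First, I would observe that if $C_1, C_2$ are $\cM$-parallel cylinders on $(X,\omega) \in \cM$ with core curves $\gamma_1, \gamma_2$ and ratio $c$, then the class $\gamma_1 - c\gamma_2 \in H_1(X;\bC)$ annihilates the absolute part $p(T_{(X,\omega)}\cM) \subset H^1(X;\bC)$, where $p$ denotes the restriction from relative to absolute cohomology. Since this annihilator is cut out by finitely many $K$-linear equations (the restrictions to absolute cohomology of the defining equations of $\cM$), the ratio $c$ must lie in $K$.

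To promote ``$c \in K$'' to finiteness of the set of possible $c$, I would normalize $(X,\omega)$ to have unit area. Then cylinder periods are a priori bounded, and the period of any core curve entering into an equivalence class must be bounded away from zero by a constant depending only on $\cM$. This last lower bound is the key step and would follow from Theorem~\ref{T:CDT}: the equivalence class supports a cylinder shear flow remaining in $\cM$, and if one cylinder in the class shrank its period to zero while another did not, the shear would push the surface out of any compact piece of the unit-area locus of $\cM$. Hence $|c|$ is uniformly bounded, and a Northcott-style argument (together with a bound on $[K:\bQ]$ depending only on $\cM$) restricts $c$ to a finite subset of $K$.

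For the finiteness of height equations, an analogous argument applies in the relative setting. Each equation restricting heights corresponds to a relative homology class $\beta \in H_1(X,\Sigma;\bC)$ lying in $\Ann(T_{(X,\omega)}\cM)$ but not already accounted for by the absolute-period equations handled above. Such classes span a subspace of dimension bounded by the codimension of $\cM$ in the stratum, giving pointwise finiteness; global finiteness then follows by the same algebraicity-plus-compactness machinery, since the coefficients of each such equation lie in $K$ and are uniformly bounded after area normalization. The main obstacle is the lower bound on core-curve periods within an equivalence class in the second step, since without it the bound on $|c|$ would be vacuous; making this rigorous requires care to ensure that the cylinder shear flow inside $\cM$ cannot remain in a compact piece of $\cM$ while one core period tends to zero.
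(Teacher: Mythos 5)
There is a genuine gap, and it sits exactly where you flag uncertainty, plus one place where you don't. The fatal step is the ``Northcott-style argument.'' Your first step correctly shows (via Filip and the fact that $\gamma_1 - c\gamma_2$ annihilates $T_{(X,\omega)}\cM$, with $\gamma_1,\gamma_2$ integral and neither individually in the annihilator) that each ratio $c$ lies in the field of definition $K$. But bounding $|c|$ above and away from zero does not restrict $c$ to a finite subset of $K$: already for $K=\bQ$ the set $\{c : 1/R \le |c| \le R\}$ is infinite. Northcott's theorem requires a bound on the Weil height of $c$ --- simultaneous control of all archimedean \emph{and} non-archimedean absolute values (equivalently, of the denominators) --- and nothing in your geometric setup bounds denominators; the ratios are not a priori algebraic integers, and their heights could in principle grow as one moves around $\cM$. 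Finiteness of the set of ratios is precisely the content to be proven, and it cannot be extracted from $c\in K$ plus an archimedean bound. Separately, the lower bound on core-curve periods that you identify as the key step is in fact false: a unit-area surface in $\cM$ can carry an equivalence class consisting entirely of cylinders of arbitrarily small circumference (this is exactly what happens near the boundary), and your proposed contradiction --- that the shear flow would leave a compact piece of the unit-area locus --- is no contradiction at all, since that locus is not compact. The sketch for the height equations inherits both problems: pointwise finiteness from a dimension count does not give a uniform finite list over all of $\cM$, and the same Northcott issue reappears.

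For contrast, the paper's proof uses algebraicity in a different way: for each equivalence class $\cC$ it applies $a_t^{\cC}$ with $t\to\infty$, so that every cylinder of $\cC$ degenerates to a simple pole and the ratios become ratios of residues at the boundary of $\BHH$. A neighborhood $\cU$ of the simple-pole locus meets the variety $\cM$ in finitely many components, each of which carries only finitely many large-modulus cylinders and finitely many linear conditions on their residues and heights; since every equivalence class lands in one of these components after shearing, both finiteness statements follow at once. The degeneration replaces the (unavailable) compactness of the unit-area locus and sidesteps any arithmetic of the ratios entirely. If you want to salvage your approach, the missing ingredient is a uniform bound on the heights (not just the absolute values) of the ratios, and it is not clear how to obtain one without something equivalent to the boundary-finiteness argument.
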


We use the term ``height" of a cylinder to denote the dimension transverse to the circumference. By ``equations which restrict the heights" we mean more precisely the following, which perhaps would be more fully described by ``equations that locally restrict the heights for deformations which do not produce new $\cM$-parallel cylinders."    For any equivalence class $\cC=\{C_1, \ldots, C_n\}$ of $\cM$-parallel cylinders on any $(X,\omega)\in \cM$ and any $t_1, \ldots, t_n\in \bR$ sufficiently close to 0, we have 
$$a_{t_1}^{C_1}\circ \cdots \circ a_{t_n}^{C_n}(X,\omega)\in \cM$$
if and only if the heights of the $C_i$ on $a_{t_1}^{C_1}\circ \cdots \circ a_{t_n}^{C_n}(X,\omega)$ satisfy certain linear equations depending on $(X,\omega)$, $\cM$ and $\cC$; these are the linear equations referred to above, and, up to reordering the basis, they can be viewed as equations on the vector space $\bR^n$. The second part of the Cylinder Finiteness Theorem says that only finitely many systems of linear equations arise in this way for each $\cM$. 

{\begin{rem}
We can rephrase in the language of \cite[Section 4]{MirWri}. Let $\gamma_i \in H_1(X-\Sigma)$ be the class of the core curve of  $C_i$, and let $\gamma_i^*\in H^1(X,\Sigma)$ be its Poincare dual. If $C_i$ has height $h_i$, then the Cylinder Deformation Theorem exactly gives that $\sum h_i \gamma_i^* \in T_{(X,\omega)}(\cM)$. Conversely if $\sum h_i' \gamma_i  \in T_{(X,\omega)}(\cM)$, then one can deform in a complex multiple of this direction to change the height of each $C_i$ to $h_i + \delta h_i'$, for $\delta\in \bR$ sufficiently small, and this deformation can be expressed in terms of the cylinder deformations $a_{t_i}^{C_i}$ referred to above. Thus, the system of linear equations we refer to above is exactly the system of linear equations determining which linear combinations of the $\gamma_i^*$ are in $T_{(X,\omega)}(\cM)$.
\end{rem}}

This statement of the Cylinder Finiteness Theorem is slightly stronger than \cite[Theorem 5.1]{MirWri}, but it follows from the same proof. We will nonetheless give an outline of a different proof here, using that $\cM$ is an algebraic variety. 

\begin{proof}[Proof of Theorem \ref{T:CDT}]
Let $\cU$ be a small neighborhood in $\cH$ of the locus in $\BHH$ where there are simple poles. We can pick $\cU$ so that $\cM\cap \cU$ contains only finitely many components (compare to the arguments in Section \ref{S:continuity}). We can also pick it so that every $(X,\omega)\in \cU$ has cylinders of enormous modulus corresponding to the simple poles of a nearby point of $\BHH$. 

For each equivalence class $\cC$ on each $(X, \omega) \in \cM$, consider 
$$\lim_{t\to\infty} a_t^\cC(X,\omega) \in \BHH.$$
On this limit, each cylinder of $\cC$ gives rise to a simple pole. (The ratios of the residues are exactly the ratios of the cylinders in $\cC$.) Hence, for $t$ large enough, $a_t^\cC(X,\omega)$ is in one of the finitely many components of $\cM\cap \cU$. 

Each component has only finitely many large modulus cylinders, and there are only finitely many sets of linear equations determining which deformations of the large cylinders stay in $\cM$. There are also only finitely many equations fixing the ratios of these large modulus cylinders. 
\end{proof}

\begin{rem}
In the previous proof, we warn that $\lim_{t\to\infty} a_t^\cC(X,\omega)$ may lie in a higher codimension subvariety of the boundary of $\cM$, so the relationship between  finiteness of ratios  and  finiteness of the number of components of the boundary is less obvious than it may seem at first. A related warning is that, for an arbitrary point of the boundary of $\cM$, there may be infinite cylinders (simple poles) that are $\cM$-parallel to finite cylinders. Neither phenomenon occur unless $\cC$ admits ``non-standard" cylinder deformations (see \cite{MirWri} for the definition). The simplest relevant example is when $\cM$ is a stratum and $\cC$ is a pair of homologous cylinders. 
\end{rem}

Now we give an elementary lemma. 

\begin{lem}\label{L:LinAlg}
For each linear subspace $V\subset \bR^n$, and each $H>1$, there is some $H'>H$ such that for every $v\in V\cap [0,\infty)^n$, there exists $v'\in V$ such that every coordinate of $v$ less than $H$ is equal to the corresponding coordinate of $v'$, and every other coordinate of $v'$ is in $[H, H']$.
\end{lem}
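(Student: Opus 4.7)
The plan is to partition $V \cap [0,\infty)^n$ according to the subset $S(v) := \{i : v_i \ge H\} \subseteq \{1,\dots,n\}$ and to produce a uniform bound $H'_S$ for each of the finitely many subsets $S$ separately; then $H' := \max(H+1,\max_S H'_S)$ works.

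For each fixed $S$, introduce the polyhedron
\[ K_S \;:=\; \bigl\{u \in V : 0 \le u_i \le H \text{ for } i \notin S,\ u_i \ge H \text{ for } i \in S\bigr\}. \]
Any $v \in V \cap [0,\infty)^n$ with $S(v) = S$ lies in $K_S$. The main step is to apply the Minkowski--Weyl decomposition
\[ K_S \;=\; \operatorname{conv}(\operatorname{vert}(K_S)) + \operatorname{rec}(K_S), \]
where $\operatorname{rec}(K_S) = V \cap \{w \in \bR^n : w_i = 0 \text{ for } i \notin S,\ w_i \ge 0 \text{ for } i \in S\}$. This form of the theorem applies because $K_S$ is line-free: any direction $w$ along which $K_S$ is translation-invariant in both signs must satisfy $w_i = 0$ both for $i \notin S$ (via the two-sided bound $0 \le u_i \le H$) and for $i \in S$ (via the one-sided bound $u_i \ge H$ combined with invariance under translations of arbitrary sign), so $w = 0$.

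Decompose $v = \alpha + \beta$ with $\alpha \in \operatorname{conv}(\operatorname{vert}(K_S))$ and $\beta \in \operatorname{rec}(K_S)$, and set $v' := \alpha$. Since $\beta_i = 0$ for $i \notin S$, one has $v'_i = v_i$ on those coordinates; since $\alpha \in K_S$, one has $v'_i \ge H$ for $i \in S$; and since $\alpha$ lies in the convex hull of the finitely many vertices of $K_S$, one has $v'_i \le H'_S$ for $i \in S$, where
\[ H'_S \;:=\; \max_{w \in \operatorname{vert}(K_S)} \; \max_{i \in S} w_i \;<\; \infty. \]
When $K_S = \emptyset$ for some $S$, no $v$ with $S(v)=S$ exists and that $S$ contributes nothing to the maximum. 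No serious obstacle is anticipated; the only point requiring verification is the line-freeness of $K_S$, which is routine, and the rest is a one-step application of a standard structure theorem for polyhedra.
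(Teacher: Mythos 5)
Your proof is correct, but it runs along a genuinely different track from the paper's. Both arguments begin by partitioning according to the set $S$ of coordinates that are at least $H$, but from there the paper argues softly: it takes $v'$ to be a minimizer of $\|\cdot\|_\infty$ over the relevant affine constraint set, observes that the resulting value depends only on the coordinates $(v_j)_{j\notin S}\in[0,H]^{S^c}$, and bounds it by upper semi-continuity on that compact cube. You instead invoke the Minkowski--Weyl decomposition of the pointed polyhedron $K_S$ into the convex hull of its vertices plus its recession cone, and use the fact that the recession cone is supported entirely on the $S$-coordinates to strip off the unbounded part without disturbing the small coordinates. Your route is more explicit and arguably more self-contained: the bound $H'_S$ is the maximum coordinate over the finitely many vertices of an explicit polyhedron, and the two verifications needed (the identification of $\operatorname{rec}(K_S)$ and line-freeness) are both immediate, whereas the paper's semicontinuity claim is asserted rather than checked and its minimization as literally stated omits the lower-bound constraint $v'_i\ge H$ on the large coordinates, which must be added for the conclusion to follow. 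The cost is importing the structure theorem for polyhedra, which the paper's compactness argument avoids. Either way the statement stands; your write-up would only need the routine remark that a nonempty pointed polyhedron has at least one vertex, so the decomposition is nonvacuous.
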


\begin{proof}
Consider the set of all $v\in V\cap [0,\infty)^n$ where a fixed subset of the coordinates are at most $H$. For each such $v$, let $v'$ be a vector that minimizes $\|v'\|_\infty$ subject to the condition that every coordinate of $v$ less than $H$ is equal to the corresponding coordinate of $v'$. The function $v\mapsto \|v'\|_\infty$ is easily seen to be upper semi-continuous. Since this function only depends on a fixed subset of the coordinates of $v$, all of which are in $[0,H]$, it can be viewed as a function on a compact set, and is hence bounded by some number $H'$. 
\end{proof}

\begin{proof}[Proof of Theorem \ref{T:deform}]
Let $R$ be the maximum ratio of two $\cM$-parallel cylinders. Let $H'$ be the maximum of the $H'$ given by Lemma \ref{L:LinAlg} for each of the finitely many systems of linear equations given by Theorem \ref{T:CDT}, with $H=R M$. Finally, set $M'= H' R$. 

Let $\cC=\{C_1, \ldots, C_n\}$ be an equivalence class of $\cM$-parallel cylinders that contains at least one cylinder of modulus at least $M$, say $C_1$. Consider the vector $\vec{h}\in \bR^n$ of the heights of these cylinders.
Then consider the normalized vector $v= \vec{h}/c(C_1)$, whose first entry is the modulus $m(C_1)=h(C_1)/c(C_1)$ of $C_1$. Lemma \ref{L:LinAlg} with $H=R M$, applied to $v$, gives a $v'$ with all coordinates at most $H'$ and all of the changed coordinates at least $RM$. If we define $\vec{h}'=c(C_1) v'$, then all entries of $\vec{h}$ are at most $H' c(C_1)$, and all the changed coordinates (compared to $\vec{h}$) are at least $RM c(C_1)$. This shows that we can deform the cylinders in $\cC$ with height greater than  $H c(C_1)=RM c(C_1)$ so that they have height in $[ RMc(C_1), H' c(C_1)]$. 

Any cylinder in $\cC$ with height greater than $RM c(C_1)$  has, before the deformation, modulus at least $M$. After the deformation, it has modulus in $[M , H' R]$. 

Since cylinders of modulus greater than a universal constant are disjoint, this gives the result. 
\end{proof}

\section{The boundary of an orbit closure}

Before we discuss Theorem \ref{T:main}, we should clarify the basic context for our discussion. Note that, in the context of multi-component surfaces, it has not been proven that every $GL^+(2,\bR)$ orbit closure is a variety. 

\begin{lem}
The boundary of an invariant subvariety $\cM$ in a stratum of $\WHH_{<\infty}$ is an invariant subvariety, and is defined by linear equations in local period coordinates in the boundary stratum. 
\end{lem}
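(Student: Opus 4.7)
The plan is to analyze the boundary point-by-point using Proposition \ref{P:xi} and the cylinder deformation machinery of Section \ref{S:parallel}. The $GL^{+}(2,\bR)$-invariance of $\partial\cM_{<\infty}$ is immediate from the $GL^{+}(2,\bR)$-invariance of $\cM$ and the continuity of the action on $\WHH$ noted in Section \ref{S:compactification}; the intersection with any stratum is therefore also invariant.

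For the local linear description, fix $(X_\infty, \omega_\infty)$ in $\partial\cM_{<\infty} \cap \cH'$, where $\cH'$ is a boundary stratum, and choose a sequence $(X_n, \omega_n) \in \cM$ converging to $(X_\infty, \omega_\infty)$ in $\WHH$. For large $n$, $\cM$ is locally cut out near $(X_n, \omega_n)$ by linear equations $L_n$ in period coordinates of the ambient stratum $\cH$, and the pushforward of $L_n$ under the collapse map $(f_n)_*$ gives linear equations $L$ in the period coordinates of $\cH'$. By Proposition \ref{P:xi}, a neighborhood of $(X_\infty, \omega_\infty)$ in $\cH'$ is identified with a neighborhood of the origin in $\Ann(V_n)$, and for such $\xi$ the deformed surface $(X_\infty, \omega_\infty) + \xi$ satisfies $L$ if and only if the lift $(X_n, \omega_n) + \xi$ satisfies $L_n$, i.e., lies in $\cM$. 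This immediately shows that the locus near $(X_\infty, \omega_\infty)$ in $\cH'$ cut out by $L$ is contained in $\partial\cM_{<\infty}$, as such points are limits of elements of $\cM$.

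The reverse inclusion --- that every nearby point of $\partial\cM_{<\infty} \cap \cH'$ satisfies $L$ --- is the main obstacle, since the first cautionary example of Section \ref{S:caution} shows that limits of surfaces in $\cM$ need not satisfy the pushforward of the defining equations when the degeneration is pathological (cylinders of unbounded modulus with equations constraining them). To rule out this pathology I would invoke Theorem \ref{T:deform}: replace each $(X_n, \omega_n)$, via a path inside $\cM$, by a surface whose cylinder moduli are uniformly bounded by some $M'$. Cylinder Finiteness (Theorem \ref{T:CDT}) further ensures that, after passing to a subsequence, only finitely many systems of equations $L_n$ arise up to the isomorphism induced by the collapse maps. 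Along such a well-behaved sequence the defining equations pass to the limit, so every nearby point of $\partial\cM_{<\infty} \cap \cH'$ satisfies one of finitely many linear systems, each obtained as a pushforward from some branch of $\cM$ along some subsequence.

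Algebraicity is then automatic: a locally closed subset of the quasi-projective variety $\cH'$ that is locally a finite union of linear subspaces in period coordinates is algebraic, so taking the closure in $\cH'$ produces an algebraic subvariety. Combined with $GL^{+}(2,\bR)$-invariance, this establishes that $\partial\cM_{<\infty} \cap \cH'$ is an invariant subvariety (understood as a finite union of irreducible invariant subvarieties, one for each branch/subsequence).
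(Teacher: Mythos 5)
Your treatment of $GL^{+}(2,\bR)$-invariance matches the paper's, but the rest takes a different route and has two genuine gaps. The most serious is the algebraicity step. You assert that a subset of the boundary stratum which is locally a finite union of linear subspaces in period coordinates is ``automatically'' algebraic. It is not: local linearity in analytic period coordinates is an analytic condition, and the assertion that a closed, $GL^{+}(2,\bR)$-invariant, locally linear set is algebraic is precisely Filip's theorem --- which, as the paper stresses immediately before this lemma, is not known for multi-component surfaces, and the boundary stratum here is typically multi-component. The paper sidesteps this entirely: it uses that the closure $\overline{\cM}$ of $\cM$ in $\BHH$ is a variety (Filip applies to $\cM$ itself, which consists of connected surfaces), pulls this back under a gluing map $G\colon \prod_i P_i\to\BHH$ whose factors are strata and moduli spaces of collapsed components, and projects away from the collapsed factors; Chevalley's theorem makes the image constructible, and closedness of the boundary then makes it a variety. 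Linearity is then the cheap step: once the boundary is known to be closed and invariant, the folklore observation of Kontsevich recorded in \cite[Proposition 1.2]{M6} gives local linearity directly, with no need to identify which equations occur.

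Your route to linearity instead tries to identify the equations as pushforwards $(f_n)_*$ of the equations cutting out $\cM$ --- but that is the content of Theorem \ref{T:main}, a strictly stronger statement which the paper proves only after, and using, this lemma. Even as a sketch of Theorem \ref{T:main}, your argument has a hole: the claim that ``along such a well-behaved sequence the defining equations pass to the limit'' is exactly the content of Theorem \ref{T:continuity} (continuity of periods into the boundary), which occupies Section \ref{S:continuity} and rests on the smooth compactification of \cite{lms}. Theorem \ref{T:deform} and cylinder finiteness remove the pathology of the first cautionary example of Section \ref{S:caution}, but they do not by themselves give convergence of periods. So both of the paper's key inputs --- algebraicity via $\BHH$ and constructibility, and Kontsevich's observation for linearity --- are absent, and what you substitute for them does not close.
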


\begin{proof}[Sketch of proof.]
We first see that it is an algebraic variety. To start, consider a point $(X,\omega)$ in the boundary of $\cM$ in $\BHH$, and assume $\omega$ does not have any poles. Here $X$ is a nodal Riemann surface, and $\omega$ may be zero on some components. 

For each component $(X_i, \omega_i)$ of $(X,\omega)$, let $g_i$ be the genus of $(X_i, \omega_i)$, and $n_i$ be the number of marked points and nodes. Define $P_i$ to be $\cM_{g_i, n_i}$ if $\omega_i=0$, and otherwise define $P_i$ to be the stratum of $(X_i, \omega_i)$. There is a natural map $G\colon \prod_i P_i \to \BHH$ whose image contains $(X,\omega)$, defined by gluing the surfaces in the $P_i$ in the same configuration as $(X,\omega)$. 

Let $\overline{\cM}$ denote the closure of $\cM$ in $\BHH$. This is a variety, hence so is its preimage  $G^{-1}(\overline{\cM})$. Let $I$ denote the set of $i$ for which $\omega_i=0$. Then we may write $\prod P_i = \prod_{i\in I} P_i \times \prod_{i\notin I} P_i$ as the product of two varieties, and we may consider the projection 
$$\pi_{nz} \colon \prod P_i \to  \prod_{i\notin I} P_i.$$ 
Then $\pi_{nz}(G^{-1}(\overline{\cM}))$ is a component of the boundary of $\cM$ in a stratum of $\WHH_{<\infty}$, and all components arise in this way. 

Given a subvariety of a product of varieties, its image under projection to a factor is a constructible set. Since the boundary is by definition closed, this shows it is a variety.

The boundary is $GL^+(2,\bR)$ invariant since $\cM$ is, and since the $GL^+(2,\bR)$ action extends continuously to the boundary. Hence a folklore observation of Kontsevich, recorded in \cite[Proposition 1.2]{M6}, gives that the boundary is locally defined by linear equations in period coordinates. 
\end{proof}

We start our proof of Theorem \ref{T:main} by applying Theorem \ref{T:deform}.  

\begin{lem}\label{L:deformlimit}
Suppose that $(X_n, \omega_n)\in \cM$ converge to $(X_\infty, \omega_\infty)\in \WHH_{<\infty}$. Let $M$ be strictly greater than the modulus of any cylinder on $(X_\infty, \omega_\infty)$, and let $M'$ be given by Theorem \ref{T:deform}. Then the $(X_n', \omega_n')$  given by Theorem \ref{T:deform} also converge to $(X_\infty, \omega_\infty)$ in $\WHH$. 
\end{lem}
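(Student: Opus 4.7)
The plan is to verify the Mirzakhani--Wright criterion of Section~\ref{S:compactification} for $(X_n', \omega_n') \to (X_\infty, \omega_\infty)$ in $\WHH$, by constructing maps $\tilde g_n \colon X_\infty \setminus \tilde U_n \to X_n'$ out of the maps $g_n \colon X_\infty \setminus U_n \to X_n$ that witness the original convergence. The key observation is that the cylinder deformations in Theorem~\ref{T:deform} act trivially on the complement of the $E_i^{(n)}$, so $X_n \setminus \bigcup_i E_i^{(n)}$ is identified isometrically with the corresponding open subset of $X_n'$; restricting $g_n$ to the preimage of this common exterior will furnish $\tilde g_n$.

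The crux is the claim that $c(E_i^{(n)}) \to 0$ uniformly in $i$ as $n \to \infty$. To see this, I would pass to a $\BHH$-convergent subsequence and suppose for contradiction that $c(E_i^{(n)}) \geq \delta > 0$ along this subsequence for some $i$. The area bound $c(E_i^{(n)}) h(E_i^{(n)}) \leq \operatorname{area}(X_n)$ and the convergence of total areas (finite in $\WHH_{<\infty}$) force the modulus of $E_i^{(n)}$ to remain bounded, so the cylinder converges to a genuine cylinder of modulus $\geq M$ on a non-collapsed component of the $\BHH$-limit $(X', \omega')$. By Lemma~\ref{L:iff} we have $\pi(X', \omega') = (X_\infty, \omega_\infty)$, so this cylinder descends to $(X_\infty, \omega_\infty)$, contradicting the hypothesis that $M$ strictly exceeds every cylinder modulus there.

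Granted the claim, set $\tilde U_n = U_n \cup g_n^{-1}(\bigcup_i E_i^{(n)})$ and $\tilde g_n = g_n|_{X_\infty \setminus \tilde U_n}$. I would verify that $\tilde U_n$ shrinks to $S$ by another $\BHH$ argument: along any $\BHH$-convergent subsequence, the cylinders $E_i^{(n)}$ now have both $c(E_i^{(n)}) \to 0$ and bounded modulus, so they shrink to nodes of $(X', \omega')$, and under $\pi$ these nodes become marked points in $S$. Consequently $g_n^{-1}(E_i^{(n)})$ clusters near $S$, and any compact $K \subset X_\infty \setminus S$ is disjoint from $g_n^{-1}(\bigcup_i E_i^{(n)})$ for $n$ large. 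Condition~(1) then follows immediately, since $\tilde g_n^\ast(\omega_n') = g_n^\ast(\omega_n)$ on the common domain.

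For condition~(2), points of $X_n' \setminus \tilde g_n(X_\infty \setminus \tilde U_n)$ split into two types. Points in the deformed cylinders (whose circumference is preserved to equal $c(E_i^{(n)})$) have injectivity radius at most $c(E_i^{(n)})/2 \to 0$. Points in the common exterior not in the image of $g_n$ had injectivity radius tending to zero in $X_n$ by the original condition~(2); since each $a_{t_i}^{E_i^{(n)}}$ scales the cylinder vertically by $e^{t_i} = m^{\mathrm{final}}/m^{\mathrm{initial}} \leq M'/M$ (using $m^{\mathrm{initial}} \geq M$ and $m^{\mathrm{final}} \leq M'$), the full deformation $X_n \to X_n'$ is Lipschitz with constant at most $M'/M$, and hence the injectivity radius in $X_n'$ of any such point is at most $M'/M$ times its injectivity radius in $X_n$, so also tends to zero uniformly. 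The main obstacle is the circumference-to-zero step, which is what converts the weak $\WHH$-convergence into quantitative information on the cylinders via $\BHH$-compactness and Lemma~\ref{L:iff}.
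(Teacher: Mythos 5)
Your argument is correct and follows the same route as the paper's own (very short) proof: show that the circumferences of the deformed cylinders $E_i^{(n)}$ tend to zero, then conclude that modifying only disjoint cylinders of vanishing circumference cannot change the limit in $\WHH$ --- you simply supply the details that the paper delegates to \cite[Section 2.4]{MirWri} and to the Mirzakhani--Wright convergence criterion. One small slip worth noting: the moduli of the $E_i^{(n)}$ need not stay bounded (they may tend to infinity, which is the whole reason they are being deformed), but this is harmless, since the disjointness of $g_n^{-1}(E_i^{(n)})$ from any compact subset of $X_\infty\setminus S$ already follows from $c(E_i^{(n)})\to 0$ together with the bound $c/2$ on the injectivity radius inside a cylinder of circumference $c$.
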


{Here, for each $n$, $(X_n', \omega_n')$ is the endpoint of the path produced by  Theorem \ref{T:deform} that starts at $(X_n, \omega_n)$.}

\begin{proof}
Let $\e_n$ be the largest circumference of a cylinder of modulus greater than $M$ on $(X_n, \omega_n)$. It is easy to see that $\e_n\to 0$; see \cite[Section 2.4]{MirWri} for more details. 

Since only disjoint cylinders of smaller and smaller circumference are modified in the passage from $(X_n, \omega_n)$ to $(X_n', \omega_n')$, the two sequences have the same limit in $\WHH$. 
\end{proof}

\begin{cor}\label{C:suffices}
It suffices to prove Theorem \ref{T:main} under the assumption that there is some $M'$ such that  the $(X_n, \omega_n)$ have no cylinders of modulus greater than $M'$.
\end{cor}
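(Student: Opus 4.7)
The plan is to replace the original sequence with the bounded-modulus sequence produced by Theorem~\ref{T:deform}, invoke Lemma~\ref{L:deformlimit} to preserve convergence to the same limit, apply the hypothesized special case of Theorem~\ref{T:main} to the new sequence, and transfer the conclusions back along the cylinder deformation paths.

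More precisely, given any $(X_n,\omega_n) \in \cM$ converging to $(X_\infty,\omega_\infty)\in \partial\cM_{<\infty}$, I would choose $M$ strictly larger than the maximum modulus of any cylinder on $(X_\infty,\omega_\infty)$, let $M'$ be the constant produced by Theorem~\ref{T:deform}, and for each $n$ let $(X_n',\omega_n')$ denote the endpoint of the cylinder deformation path produced by Theorem~\ref{T:deform} starting at $(X_n,\omega_n)$. Lemma~\ref{L:deformlimit} ensures that the new sequence $(X_n',\omega_n')$ still converges to $(X_\infty,\omega_\infty)$, while by construction every cylinder on $(X_n',\omega_n')$ has modulus at most $M'$. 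The assumed special case of Theorem~\ref{T:main} applied to $(X_n',\omega_n')$ then yields both the description of $\cM'$ (which depends only on $\cM$ and $(X_\infty,\omega_\infty)$, not on the particular sequence) and, after discarding finitely many terms, the partition into finitely many subsequences together with the required tangent-space identities at each $(X_n',\omega_n')$.

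To transfer these conclusions back to $(X_n,\omega_n)$, I use that each cylinder deformation path lies entirely in $\cM$, and that $\cM$ is locally cut out in period coordinates by linear equations. Consequently, parallel transport along the path via the flat Gauss--Manin connection identifies the tangent subspace of a branch of $\cM$ at $(X_n',\omega_n')$ with the tangent subspace of the corresponding branch at $(X_n,\omega_n)$. The tangent space of the boundary stratum at either endpoint is the annihilator of the space of vanishing cycles $V_n$, which is determined by the topological collapse data and is unaffected by the cylinder deformations: each deformed cylinder $E_i$ has modulus strictly greater than any cylinder on the limit and hence remains a vanishing cylinder throughout the path. The intersection of the two tangent subspaces is therefore preserved between the endpoints, and the partition together with the tangent-space identities at $(X_n',\omega_n')$ descend directly to $(X_n,\omega_n)$.

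The main piece of bookkeeping, and the only real obstacle, is the consistent selection of a branch of $\cM$ along each deformation path when it crosses the self-intersection locus of $\cM$. Since this locus is a proper subvariety of $\cM$, it can be handled by slightly perturbing each path within $\cM$, or equivalently by choosing a point on each path lying on a unique branch and continuously extending the branch choice to both endpoints. Once this is in place, the reduction is immediate.
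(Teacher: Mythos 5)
Your proposal is correct and follows essentially the same route as the paper: deform via Theorem~\ref{T:deform}, use Lemma~\ref{L:deformlimit} to keep the same limit, and note that along the cylinder deformation path (which stays in $\cM$) both $T(\cM)$ and $T(\cM)\cap\Ann(V)$ are constant, so the conclusion transfers back to the original sequence. The paper's proof is just a terser version of this, adding only the observation that for $M$ large the deformed cylinders are disjoint so no new large-modulus cylinders are created.
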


\begin{proof}
First note that, assuming $M$ is large enough, two cylinders of modulus at least $M$ cannot cross. Thus, passing from $(X_n, \omega_n)$ to $(X_n', \omega_n')$ removes the large modulus cylinders without creating any new ones, since even at the end of the deformation the large modulus cylinders have modulus at least $M$. 

Next, note that the cylinder deformation provides a path in $\cM$ from $(X_n, \omega_n)$ to $(X_n', \omega_n')$. Along this path, $T(\cM)$ and $T(\cM)\cap \Ann(V)$ are constant. 
\end{proof}

We now give a result which, together with the above, will rule out the problem of the first cautionary example in Section \ref{S:caution}. 

\begin{thm}\label{T:continuity} 
For any $(X_\infty, \omega_\infty)$ in the boundary of $\WHH_{<\infty}(\kappa)$, there exist finitely many connected, simply connected, open subsets  $S_i$ of $\bC^{\dim \cH(\kappa)}$ with continuous maps $S_i\to \cH(\kappa)$ such that:
\begin{enumerate}
\item There is a  family of translation surfaces over each $S_i$ whose fiber over each point is the translation surface represented by 
the image of that point in $\cH(\kappa)$. 
\item  There is a locally constant map $H_1(X,\Sigma)\to H_1(X_\infty,\Sigma_\infty)$ for each $S_i$ from the relative homology of the fibers of this family to that of $(X_\infty, \omega_\infty)$, such that for any sequence in $S_i$ converging to $(X_\infty, \omega_\infty)$, the collapse maps discussed in Section \ref{S:compactification} can be chosen to induce this map on $H_1$. 
\item With respect to this map, the relative and absolute periods extend continuously from each $S_i$ to $(X_\infty, \omega_\infty)$. (The relative cohomology of $(X_\infty, \omega_\infty)$ is a quotient of the relative cohomology of a translation surface in $S_i$, so the period of each relative homology class can be defined at $(X_\infty, \omega_\infty)$.)
\item Let $\cU_M$ denote the subset of $\cH(\kappa)$ without cylinders of modulus greater than $M$. Then for $M$ sufficiently large, the image of the union $\cup S_i$ contains the intersection of $\cU_M$ with a neighborhood of $(X_\infty, \omega_\infty)$ in $\WHH(\kappa)$.
\item For each $i$, any subvariety $\cM\subset \cH(\kappa)$ intersects the image in $\cH(\kappa)$ of $S_i$ in at most finitely many connected components. 
\end{enumerate}
\end{thm}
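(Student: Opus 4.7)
The plan is to construct each $S_i$ from a local plumbing chart around a point of the compact preimage $\pi^{-1}(X_\infty, \omega_\infty) \subset \BHH$, exploiting that $\BHH$ has a natural complex analytic structure even though $\WHH$ does not. By Lemma \ref{L:iff} and its proof, $\pi^{-1}(X_\infty, \omega_\infty)$ is a compact subset whose points differ only in the moduli of the collapsed components and in possible permutations of differentials on isomorphic non-collapsed components. For each $(X', \omega')$ in this preimage I would pick a polydisk neighborhood $V \subset \BHH$ whose coordinates come from plumbing parameters at each node of $X'$ together with period coordinates on the non-collapsed components; setting $V^0 = V \cap \cH(\kappa)$, compactness lets finitely many such $V^0$ cover the preimage. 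I would then intersect with the bounded-modulus locus $\cU_M$ (for $M$ larger than any modulus on $(X_\infty, \omega_\infty)$) so as to stay near the intended boundary point, and cover the generally non-simply-connected result $V^0 \cap \cU_M$ by finitely many simply connected open subsets, taking each to be an $S_i \subset \bC^{\dim \cH(\kappa)}$ realized via period coordinates of the stratum.

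Properties (1)--(4) should then fall out of the construction. The universal family over $\BHH$ restricts to a family of translation surfaces over each $S_i$, establishing (1). In a plumbing chart, the relative homology of a smoothed surface splits canonically into vanishing cycles (killed by the collapse map to $(X_\infty, \omega_\infty)$) and cycles lifted from the non-collapsed components of $(X', \omega')$, and this identification with $H_1(X_\infty, \Sigma_\infty)$ is locally constant, which gives (2). Property (3) follows because the plumbing parameters are themselves the periods of the vanishing cycles, which tend to zero at the boundary, while the remaining periods extend continuously to the corresponding periods on $(X_\infty, \omega_\infty)$. For (4), any sequence in $\cU_M$ converging to $(X_\infty, \omega_\infty)$ in $\WHH$ has, by Lemma \ref{L:iff} together with compactness of the preimage, subsequences converging in $\BHH$ into one of our finitely many polydisks, so the tail lies in the image of some $S_i$.

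The main obstacle is property (5). Here I would invoke that $\cM$ is an algebraic subvariety of $\cH(\kappa)$, hence locally cut out by linear equations in period coordinates; when pulled back to a plumbing chart $V$, these become holomorphic equations (since vanishing cycle periods are holomorphic functions of the plumbing parameters), so the closure $\overline{\cM}$ extends to an analytic subvariety of the polydisk $V$. By local Noetherianity of coherent analytic sheaves, $\overline{\cM} \cap V$ has only finitely many irreducible components, and intersecting with $V^0 \cap \cU_M$ and then with the image of $S_i$ preserves this finiteness, yielding (5). The delicate aspect will be verifying that the linear period-coordinate equations defining $\cM$ genuinely extend holomorphically across the plumbing coordinates (rather than developing essential singularities), but this is guaranteed by the standard plumbing description of $\BHH$ as a complex analytic space near a nodal differential.
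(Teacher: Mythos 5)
Your overall architecture (cover the compact fibre over $(X_\infty,\omega_\infty)$ in a compactification by finitely many charts, cut them into simply connected pieces, prove continuity of periods chart by chart, and use analyticity for (5)) matches the paper's, but you have built it on the wrong compactification, and that is exactly where the difficulty lives. The paper does not work in $\BHH$; it works in the smooth multi-scale compactification $\wideparen{\HH}(\kappa)$ of \cite{lms}, precisely because $\BHH$ has no ``standard plumbing description as a complex analytic space near a nodal differential'' of the kind you invoke. A boundary point $(X',\omega')\in\BHH$ with $\omega'\equiv 0$ on some components forgets the lower-level differentials (the shapes and relative scales of the differential on the collapsing part), and the fibre of $\wideparen{\HH}(\kappa)\to\BHH$ over such a point is positive-dimensional. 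Consequently: (i) plumbing parameters of the stable curve together with periods on the non-collapsed components do not give local coordinates for $\cH(\kappa)$ near $(X',\omega')$, and the periods of relative cycles passing through the collapsed components are not holomorphic (or even obviously continuous) functions of the data visible in $\BHH$ --- controlling them is the content of the perturbed period coordinates of \cite[Section 11]{lms}, which Theorem \ref{T:lms} is devoted to exploiting, in particular the cross-level terms $r^{(h,l)}(s)$; (ii) your claim that $V^0\cap\cU_M$ ``can be covered by finitely many simply connected open subsets'' carrying a locally constant homology identification is unjustified --- an open subset of a manifold need not admit any finite cover by simply connected sets, and producing one (the $4^{N-1}$ sign-condition sets $S_j$ on the smoothing parameters $t_i$) uses that $\wideparen{\HH}(\kappa)$ is a smooth orbifold with normal-crossing boundary. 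Your closing sentence, deferring to ``the standard plumbing description of $\BHH$,'' points at precisely the assertion that fails for $\BHH$ and motivated the construction of $\wideparen{\HH}(\kappa)$ in the first place.

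Separately, your argument for (5) has a gap even granting the charts: an analytic subset of a polydisk with finitely many irreducible components can meet an open subset in infinitely many connected components, so ``intersecting with the image of $S_i$ preserves this finiteness'' does not follow. The paper instead observes that the intersection of $S_j$ with the smooth locus of $\cM$ is a relatively compact semi-analytic set and cites \cite{BM88} for finiteness of its set of connected components; you would need the analogous tameness statement for your $S_i$, which again is supplied by the algebraic/analytic structure of $\wideparen{\HH}(\kappa)$ rather than of $\BHH$.
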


We remark that the above subsets $S_i$ arise from neighborhoods of preimages of $(X_\infty,\Sigma_\infty)$ in a smooth compactification of $\HH(\kappa)$. See Section \ref{S:continuity} for an outline of the proof. The main purpose of all the above is to get the following result. 

\begin{lem}
{If a sequence $(X_n, \omega_n)\in \cM$ converges to $(X_\infty, \omega_\infty)$ in $\WHH(\kappa)$},  then $\omega_\infty \in T(\cM)$, using the identification between $H^1(X_\infty, \Sigma_\infty)$ and $\Ann(V)$. 
\end{lem}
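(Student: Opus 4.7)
The plan is to lift the sequence into a single simply connected chart provided by Theorem \ref{T:continuity}, where the equations defining $\cM$ become linear in fixed coordinates and the limit periods extend continuously. First, I would apply Corollary \ref{C:suffices} to reduce to the case where there is some $M'$ such that no $(X_n,\omega_n)$ has a cylinder of modulus greater than $M'$. Then, choosing $M > M'$ and invoking Theorem \ref{T:continuity}(4), a neighborhood of $(X_\infty,\omega_\infty)$ in $\cU_M$ is covered by the images of finitely many $S_i$, so for $n$ large enough a pigeonhole argument lets me pass to a subsequence whose lifts all lie in a single $S_i$.

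Next, I would use that on a simply connected $S_i$ the relative cohomology bundle is canonically trivialized, so the standard coordinates on $S_i \subset \bC^{\dim\cH(\kappa)}$ are simultaneously period coordinates for every fiber in the family. Because $\cM$ is locally cut out by linear equations in period coordinates, its preimage in $S_i$ is cut out by linear equations in these ambient coordinates; by Theorem \ref{T:continuity}(5), this preimage has only finitely many connected components. One more passage to a subsequence lets me assume all the lifted $(X_n,\omega_n)$ lie in a single component, hence in a fixed linear subspace $L \subset \bC^{\dim\cH(\kappa)}$. Under the trivialization, $L$ matches $T_{(X_n,\omega_n)}(\cM) \subset H^1(X_n,\Sigma_n,\bC)$ for each $n$ in this subsequence.

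Finally, by Theorem \ref{T:continuity}(3), the periods extend continuously from $S_i$ to $(X_\infty,\omega_\infty)$; the periods of vanishing cycles must tend to $0$, so the limit of the lifted period vectors lies in $\Ann(V) \subset \bC^{\dim\cH(\kappa)}$ and is identified with $\omega_\infty \in H^1(X_\infty,\Sigma_\infty,\bC)$ via the isomorphism $H^1(X_\infty,\Sigma_\infty,\bC) \cong \Ann(V)$. Closedness of $L$ then yields $\omega_\infty \in L = T(\cM)$, as required.

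The main obstacle I anticipate is bookkeeping rather than a substantive difficulty: one must keep careful track of the identifications between the fixed coordinate space $\bC^{\dim\cH(\kappa)}$, the relative cohomologies $H^1(X_n,\Sigma_n,\bC)$, and the limit cohomology $H^1(X_\infty,\Sigma_\infty,\bC)$, and verify that the linear subspace extracted from $S_i$ really coincides with $T_{(X_n,\omega_n)}(\cM)$ under the trivialization. Once these identifications are in place, the conclusion follows purely from linearity of $\cM$ in period coordinates combined with continuity of periods on $S_i$.
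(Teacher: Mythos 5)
Your proposal is correct and follows essentially the same route as the paper: reduce via Corollary \ref{C:suffices} to bounded modulus, pass to a subsequence lying in a single $S_i$ and a single component of $\cM\cap S_i$ where $T(\cM)$ is a fixed linear subspace cut out by equations on periods, and then use the continuity of periods from Theorem \ref{T:continuity}(3) to conclude. The extra bookkeeping you flag (matching the chart's linear subspace with $T_{(X_n,\omega_n)}(\cM)$ and landing in $\Ann(V)$) is exactly what the paper's terser proof leaves implicit.
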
 

\begin{proof}
It suffices to assume that we are in the situation of Corollary \ref{C:suffices}. By partitioning into finitely many subsequences, it also suffices to assume that all $(X_n, \omega_n)$ are in one of the $S_i$ of Theorem \ref{T:continuity}, and moreover in one of the components of $\cM\cap S_i$. 

On this component, $T(\cM)$ is constant, and is cut out by finitely many equations on periods. By Theorem \ref{T:continuity}, the periods extend continuously to $(X_\infty, \omega_\infty)$, so this gives the result. 
\end{proof}

\begin{rem}
The previous lemma is treated rather indirectly in \cite{MirWri}. There, it is shown that certain tangent vectors to the orbit closure of $(X_\infty, \omega_\infty)$ are contained in $T(\cM)$. These tangent vectors span a space that contains $\omega$. 
\end{rem}

\begin{lem}
In the situation of the previous lemma, for any sufficiently small $\xi\in T(\cM)\cap\Ann(V)$, we have that $(X_\infty, \omega_\infty)+\xi$ is contained in the boundary of $\cM$. 
\end{lem}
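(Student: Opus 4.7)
The plan is to deform each $(X_n, \omega_n)$ by the same vector $\xi$ and take a limit. Concretely, as in the preceding lemma, we may reduce via Corollary~\ref{C:suffices} to the case of uniformly bounded cylinder moduli, and then, after passing to a subsequence and partitioning, assume that every $(X_n, \omega_n)$ lies in a single connected component $C$ of $\cM\cap S_i$ for some $S_i$ given by Theorem \ref{T:continuity}. The chart $S_i$ provides a trivialization under which relative periods, and hence the linear equations locally cutting out $\cM$, can be read off uniformly along the sequence. In particular $T(\cM)$ is a fixed linear subspace of the common period-coordinate space along $C$, and by Proposition~\ref{P:xi} the subspace $\Ann(V)$ is likewise locally constant in a neighborhood of $(X_\infty, \omega_\infty)$ in $\cH$. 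Thus $W := T(\cM)\cap\Ann(V)$ is a well-defined fixed subspace that is simultaneously tangent to $\cM$ at each $(X_n, \omega_n)$ and tangent to the boundary stratum at $(X_\infty, \omega_\infty)$.

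Given $\xi\in W$ sufficiently small, I would then argue in two steps. First, because $\xi\in T(\cM)$ and $\cM$ is cut out on $C$ by the same linear period equations for every $n$, we have $(X_n, \omega_n) + \xi \in \cM$ for all $n$ large enough. Second, because $\xi\in\Ann(V)$, Proposition~\ref{P:xi} applied to the constant sequence $\xi_n = \xi$ gives that $(X_n, \omega_n)+\xi$ is well-defined for large $n$ and converges to $(X_\infty, \omega_\infty)+\xi$ in $\WHH$. Since $\xi$ is tangent to the boundary stratum containing $(X_\infty, \omega_\infty)$, the point $(X_\infty, \omega_\infty)+\xi$ remains in that boundary stratum of $\WHH_{<\infty}$ for $\xi$ small, hence is not in $\cH(\kappa)$. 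Being the limit of points of $\cM$, it must then lie in $\partial\cM$.

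The main obstacle is the bookkeeping that lets one view $W$ coherently as a subspace that makes sense at every $(X_n,\omega_n)$ and at $(X_\infty,\omega_\infty)$ simultaneously. This is exactly what the combination of Theorem \ref{T:continuity} (common period-coordinate trivialization on $S_i$, together with the locally constant map on relative homology) and Proposition \ref{P:xi} (extension and local constancy of $\Ann(V)$, plus convergence of $+\xi_n$-deformations) is designed to provide; once those inputs are in hand, the actual deformation-and-limit argument is essentially immediate.
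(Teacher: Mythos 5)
Your proposal is correct and is essentially the argument the paper intends: its own proof is a one-line deferral ("handled in the same way as \cite{MirWri}, by using Proposition \ref{P:xi} directly"), and the deformation-and-limit argument you spell out — deform each $(X_n,\omega_n)$ by the fixed $\xi\in T(\cM)\cap\Ann(V)$, note the deformed points stay in $\cM$, and apply Proposition \ref{P:xi} with the constant sequence $\xi_n=\xi$ to get convergence to $(X_\infty,\omega_\infty)+\xi$ — is exactly what that citation encodes. Your additional bookkeeping via Theorem \ref{T:continuity} is harmless but not needed here; Proposition \ref{P:xi} alone supplies the uniform neighborhood in which the deformations are defined.
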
 

\begin{proof}
This is handled in the same way as \cite{MirWri}, by using Proposition \ref{P:xi} directly.
\end{proof}

\begin{proof}[Proof of Theorem \ref{T:main}]
This follows directly from the previous two lemmas. Indeed, the boundary is  a variety and is $GL^+(2,\bR)$ invariant. The previous two lemmas show that its tangent space is ``not too big" and ``not too small" respectively. 
\end{proof}

\section{The structure of the multi-component boundary}\label{S:multi}

{In the first part of this section, we discuss to what extent basic dynamical results on orbit closures extend to the multi-component case. In the second part, we discuss and prove Theorem \ref{T:prime}. 

The length of this section is due to the proof of the second statement of Theorem \ref{T:prime}, and the reader may wish to first consider the case when there are only two components, when the first subsection is not required and the technical difficulties in the proof do not appear.}

{\subsection{Preliminary dynamical results}\label{SS:multidyn} In this subsection we discuss invariant measures and hyperbolicity of the diagonal Teichm\"uller geodesic flow. We will use these preliminary results in the proof of Theorem \ref{T:prime}, where we will claim that a basic result on invariant subvarieties of connected surfaces can partially extend to the multi-component case. The results of this subsection justify this, and may also be of independent interest. 

\begin{lem}\label{L:measure}
Every invariant subvariety $\cM$ of multi-component surfaces has a $GL^{+}(2,\bR)$ invariant Lebesgue class measure. 
\end{lem}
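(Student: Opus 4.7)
The plan is to mimic the construction of the Masur--Veech measure on a connected stratum, using local linearity of $\cM$ in period coordinates.

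First I would establish that $\cM$ is locally cut out by complex-linear equations in period coordinates, via the folklore Kontsevich argument recorded in \cite[Proposition 1.2]{M6}. That argument takes as input only that $\cM$ is algebraic, $GL^{+}(2,\bR)$-invariant, and that $GL^{+}(2,\bR)$ acts linearly on period coordinates; all three inputs are available in the multi-component setting because period coordinates on $\cH(\boldkappa)$ are, up to the symmetric group action, given by $\bigoplus_i H^1(X_i, \Sigma_i; \bC)$, on which the diagonal $GL^{+}(2,\bR)$ action is linear.

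Given the linear structure, I would then define the measure locally as Lebesgue measure in period coordinates restricted to the linear subspace cutting out $\cM$. Since transition maps between period-coordinate charts have integer-linear parts coming from changes of basis in integral relative homology, they preserve Lebesgue, so these local measures piece together into a well-defined $\sigma$-finite measure on $\cM$ in the Lebesgue class; the symmetric group action on $\cH(\boldkappa)$ permutes summands and therefore also preserves Lebesgue, so the construction descends to the quotient. The resulting measure is immediately $SL(2,\bR)$-invariant because $SL(2,\bR)$ preserves Lebesgue on $\bR^2 \cong \bC$, and the full $GL^{+}(2,\bR)$ rescales it by a fixed power of the determinant, hence preserves its measure class. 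To promote class-invariance to genuine $GL^{+}(2,\bR)$-invariance, I would disintegrate along the $\bR_{>0}$-scaling direction using the total area of the multi-component surface, in the standard way.

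The main technical point requiring care is the consistency of the local Lebesgue measures under chart transitions and under the symmetric group quotient defining $\cH(\boldkappa)$; this is essentially routine once a global normalization is chosen, and it is the only place where the multi-component structure enters non-trivially beyond the connected case already handled by Filip \cite{Fi2}.
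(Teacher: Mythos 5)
There is a genuine gap at the step where you assert that the local Lebesgue measures glue: ``transition maps between period-coordinate charts have integer-linear parts coming from changes of basis in integral relative homology, so they preserve Lebesgue.'' This is correct for the ambient stratum, where the transition maps lie in $GL(n,\bZ)$ and hence have determinant $\pm 1$. But the measure you need to glue lives on the linear subspace $T_{(X,\omega)}(\cM)\subset H^1(X,\Sigma;\bC)$, and what acts on that subspace is the \emph{restriction} of the monodromy to it. The subspace is in general defined only over a number field, not over $\bZ$, and even for an invariant subspace of an integral matrix the restricted determinant need not be $\pm 1$: the matrix $\left(\begin{smallmatrix}2&1\\1&1\end{smallmatrix}\right)$ preserves its two eigenlines and scales Lebesgue measure on each by $(3\pm\sqrt{5})/2$. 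So the consistency of the local Lebesgue measures under chart transitions --- which you describe as ``essentially routine once a global normalization is chosen'' --- is exactly the nontrivial content of the lemma, and your argument does not establish it.

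The paper closes this gap in two steps. On the absolute part $p(T(\cM))$ it invokes Deligne's semisimplicity theorem for the Zariski closure of the monodromy on absolute cohomology: a connected semisimple group equals its derived subgroup, hence acts with determinant $1$ on any invariant subspace, so the full monodromy acts on $p(T(\cM))$ with determinant $\pm 1$. On the relative part it uses that $\ker(p)$, and hence $\ker(p)\cap T(\cM)$, is a trivial flat bundle over a finite cover of $\cM$, so one may choose a locally constant volume form there. Combining the two gives a monodromy-invariant, locally constant volume form on $T(\cM)$, which is what makes the local Lebesgue measures patch. If you want to salvage your approach, you must supply an argument of this kind (for the absolute part one could alternatively use that monodromy preserves the symplectic form restricted to $p(T(\cM))$); without it the proof is incomplete. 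The remaining ingredients of your write-up (local linearity via the Kontsevich folklore argument, descent through the symmetric group quotient) are fine and consistent with the paper.
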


Since this measure is invariant under $GL^{+}(2,\bR)$, it necessarily has infinite mass.

\begin{proof}
Let $(X,\omega)\in \cM$. If the monodromy representation $\pi_1(\cM) \to GL^{+}(T_{(X,\omega)}(\cM))$ has image contained in the matrices with determinant $\pm 1$, we may simply use (any multiple of) the standard Lebesgue measure on the tangent space to $\cM$, which provides local coordinates for $\cM$. So, we will show this is the case. 

The identity component of the Zariski closure of the image of the full monodromy representation on absolute cohomology is semisimple by \cite[Corollaire 4.2.9]{DeligneII}. Standard Lie theory gives that it acts on any invariant subspace by matrices with determinant $	1$, since a connected algebraic group is semisimple if and only if it is equal to its derived subgroup \cite[Section 14.2, page 182]{Borel}. In particular, the identity component of the Zariski closure of monodromy acts on $p(T_{(X,\omega)}(\cM))$ by matrices of determinant $1$. Since every element of the monodromy group has a finite power in this identity component, we get that all of monodromy acts on  $p(T_{(X,\omega)}(\cM))$ by matrices of determinant $\pm1$.

Now, recall that $\ker(p)$ is a trivial bundle over a finite cover of $\cM$, and so the same is true for $\ker(p)\cap T(\cM)$; see, for example, \cite[Lemma 2.3]{LNW}. Consider any volume form on $\ker(p)\cap T_{(X,\omega)}(\cM)$, i.e. an element of the top exterior power. Combining this with a volume form on $p(T_{(X,\omega)}(\cM))$ gives rise to a volume form which is locally constant. Monodromy preserves this volume form. 
\end{proof}

\begin{lem}\label{L:finite}
The unit area locus of an invariant subvariety $\cM$ of multi-component surfaces has a locally finite $SL(2,\bR)$ invariant measure, which, for any $\e>0$, is finite on the invariant subset where all components of the surface have area at least $\e$.
\end{lem}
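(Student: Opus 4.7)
The plan is to first construct $\nu$ on $\cM_1$ from the $GL^{+}(2,\bR)$-invariant Lebesgue class measure $\mu$ produced by Lemma \ref{L:measure}, via the classical cone construction. That is, I would disintegrate $\mu$ along the orbits of the scaling subgroup $\bR_{>0}\subset GL^{+}(2,\bR)$, which are transverse to the unit area locus, to obtain a measure $\nu$ on $\cM_1$ satisfying the analogue of the Masur--Veech cone formula. Since $SL(2,\bR)$ preserves the total area function and commutes with scaling, $\nu$ is automatically $SL(2,\bR)$-invariant.

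Local finiteness of $\nu$ is then immediate: for any compact $K\subset \cM_1$, the cone $\{c\cdot k : k\in K,\ c\in (0,1]\}$ is the continuous image of the compact set $K\times [0,1]$, hence compact in $\cM$, and any Lebesgue class measure assigns finite mass to a compact set.

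The main obstacle is the finiteness of $\nu$ on the invariant subset $\cM_1^{\geq\e}$ of surfaces all of whose components have area at least $\e$. My plan is to reduce to the classical single-component finite volume statement by projecting to each factor: for each $i$, the projection $\pi_i\colon \cM\to \cH(\kappa_i)$ is algebraic and $GL^{+}(2,\bR)$-equivariant, and by Filip's theorem the closure $\cN_i$ of $\pi_i(\cM)$ is an irreducible invariant subvariety of a single-component stratum. By Eskin--Mirzakhani--Mohammadi combined with Filip, each $\cN_{i,1}$ carries a finite $SL(2,\bR)$-invariant measure. I would then fiber $\cM_1^{\geq \e}$ over the compact simplex $\Delta^{\geq \e}=\{(a_1,\ldots,a_k):\sum a_i=1,\ a_i\geq \e\}$ via the continuous area-vector map; over each area tuple the fiber sits inside the product of $a_i$-area loci of the $\cN_i$, which has finite product measure. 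Integrating the finite fiber measure against the compact base concludes.

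The hardest technical point will be relating the Lebesgue class measure $\mu$ on $\cM$ to the ambient product's measure, since $\cM$ is generally a proper subvariety of $\prod \cH(\kappa_i)$ (after lifting), and its Lebesgue class measure is not literally the restriction of the ambient one. I anticipate handling this by working in local period coordinates where $\cM$ is cut out by linear equations, so that within each chart a suitable change of coordinates reduces the needed estimate to a Fubini-type integration against the finite single-component volumes, uniformly over the relatively compact slab $\Delta^{\geq \e}$.
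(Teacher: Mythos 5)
The cone construction and the local finiteness argument are fine and match the paper. The problem is the heart of the lemma: finiteness on the set $A_\e$ where every component has area at least $\e$. Your plan is to dominate the measure of $A_\e$ by fibering over the area simplex and observing that each fiber ``sits inside'' a product of finite-measure loci $\prod_i a_i\cdot\cN_{i,1}$. This does not give a bound: the measure you need to control is the Lebesgue-class measure of $\cM$ itself (a measure supported on a proper subvariety of the product, singular with respect to the product measure whenever $\cM$ is not all of $\prod\cN_i$), and a subset of a finite-measure space can of course have infinite measure for a different, mutually singular measure. The finiteness of the Masur--Veech-type measures on the factors $\cN_{i,1}$ therefore says nothing about the measure of $\cM_1^{\ge\e}$. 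Your closing paragraph correctly identifies this as the hard point, but the proposed fix --- linear period coordinates plus ``a Fubini-type integration against the finite single-component volumes'' --- is not an argument: there is no disintegration of the Lebesgue-class measure on $\cM$ as a product over the projections unless $\cM$ is itself a product, and the fibers of $\pi_i|_{\cM}$ can be positive-dimensional and noncompact with no a priori integrable fiber volume. In short, you are trying to deduce a measure-theoretic finiteness statement on $\cM$ purely from finiteness statements on its projections, and that implication is false in general.

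What is actually needed, and what the paper does, is a dynamical input on $\cM$ itself, obtained by transferring a \emph{pointwise} (rather than measure-theoretic) statement from the factors. By the Minsky--Weiss quantitative nondivergence theorem, for each single-component stratum $\cH_i$ there is a compact set $K_i$ such that every unit-area surface with no horizontal saddle connection spends at least a proportion $1-1/(2k)$ of its $u_t$-orbit in $K_i$. Since this holds for each component of a multi-component surface separately (after normalizing areas), every point of $A_\e$ without horizontal saddle connections spends at least half its $u_t$-time in the compact set $K=\prod_i[\e,1]K_i$; and $SO(2)$-invariance plus Fubini shows almost every surface has no horizontal saddle connection. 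One then takes a positive continuous $f\in L^1(\mu_\e)$ and applies the Birkhoff theorem: the time average $\overline f$ is integrable yet bounded below by $\tfrac12\min_K f>0$ almost everywhere, forcing $\mu_\e$ to be finite. This Eskin--Masur-style step, converting pointwise recurrence into finiteness of the invariant measure, is the missing idea in your proposal; without it (or some substitute nondivergence estimate on $\cM$ itself), the reduction to the single-component case cannot be completed.
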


Here unit area means that the sum of the areas of the components is 1. The proof is a slight modification of the proof of \cite[Corollary 2.6]{MinW}, and we include it for the convenience of the reader. 

\begin{proof}
Let $\mu$ be a measure produced by Lemma \ref{L:measure}. We define a measure $\mu_0$ on the unit area locus using the standard technique, namely 
$$\mu_0(S) = \mu( \{ (X, t\omega) : (X,\omega)\in S, t\in (0,1) \}).$$

Let $A_\e$ be the subset of the unit area locus where the area of each component is at least $\e$, and let $\mu_\e$ be the restriction of $\mu_0$ to $A_\e$. We will now show that $\mu_\epsilon$ is a finite measure, proving the lemma. Note that $SO(2)$ invariance, the fact that there are only countably many saddle connection directions on a given surface, and Fubini imply that $\mu_0$-almost every surface does not have any horizontal saddle connections. 

It suffices to assume $\cM$ is a subvariety of a product $\cH_1 \times \cdots\times \cH_k$ of strata (rather than a quotient of this by a subset of the symmetric group), so for notational convenience we will do so. Let 
$$u_t =\left( \begin{array}{cc} 1 & t \\ 0 & 1\end{array}\right).$$
By \cite[Theorem H2]{MinW}, there is a compact subset $K_i$ of the unit area locus in $\cH_i$ so that the $u_t$ orbit of every surface without horizontal saddle connections spends (asymptotically) at least $1-1/(2k)$ of its time in $K_i$. 

Let $K = \prod_{i=1}^k [\epsilon, 1] K_i$ be the compact subset of $\cH_1 \times \cdots \times\cH_k$ where all components have area at least $\e$ and at most 1, and, after normalizing the area, each component lies in the appropriate set $K_i$. We get immediately that, for the diagonal $u_t$ action on $\cH_1 \times \cdots\times \cH_k$, any point in $A_\e$ without horizontal saddle connections has $u_t$ orbit that spends at least half of its time in $K$. 

Let $f \in L^1(\mu_\e)$ be a positive, continuous function, which can be constructed using local finiteness of the measure.  Let $\overline{f}(X,\omega)$ denote the time average of $f$ for the forward  $u_t$ orbit of $(X,\omega)$. The Birkhoff Ergodic Theorem implies that $\overline{f}$ is defined almost everywhere and is in $L^1(\mu_\e)$; this does not require finiteness or ergodicity of the measure. 

Let $\alpha$ be the minimum of $f$ on $K$. The recurrence results imply that $\overline{f} \geq \alpha/2$ on a full measure subset for $\mu_\e$, so the fact that $\overline{f}$ is integrable implies $\mu_\e$ has finite measure. 
\end{proof}

\begin{lem}\label{L:hyperbolic}
Suppose $\cM$ is a prime invariant subvariety of multi-component surfaces, and moreover satisfies the second conclusion of Theorem \ref{T:prime}. Then the diagonal action of  $g_t=\left( \begin{array}{cc} e^{t} & 0 \\ 0 & e^{-t}\end{array}\right)$
is uniformly hyperbolic on compact subsets of the unit area locus in $\cM$. 
\end{lem}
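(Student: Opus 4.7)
The plan is to construct a $g_t$-invariant hyperbolic splitting $T(\cM_1) = E^u \oplus E^c \oplus E^s$ with $E^c$ one-dimensional, then use the hypotheses to rule out additional central directions. First, I would work in local period coordinates. Since $\cM$ is $SO(2)$-invariant, $T_{(X,\omega)}(\cM)$ is a $\bC$-linear subspace of $H^1(X,\Sigma;\bC) = \prod_i H^1(X_i,\Sigma_i;\bC)$. The diagonal $g_t$ acts on $H^1(X,\Sigma;\bC) = H^1(X,\Sigma;\bR) \oplus iH^1(X,\Sigma;\bR)$ by $e^t$ on the first summand and $e^{-t}$ on the second, so setting $V_\bR := T(\cM)\cap H^1(X,\Sigma;\bR)$ produces $T(\cM) = V_\bR \oplus iV_\bR$, a $g_t$-invariant splitting of pure weights $\pm 1$.

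Next I would intersect with the $g_t$-invariant hyperplane $T(\cM_1) = \ker(dA)\cap T(\cM)$, where $A = \sum_i A_i$ is the total area. Setting $E^u := V_\bR\cap T(\cM_1)$ and $E^s := iV_\bR\cap T(\cM_1)$ gives two sub-bundles on which $g_t$ acts by $e^{\pm t}$ respectively. The flow direction $\bar\omega = \omega_{\Re} - i\omega_{\Im}$ lies in $T(\cM_1)$ since $dA(\bar\omega) = 0$ (an elementary computation using that $\int \omega \wedge \omega = 0$), but it is not contained in $E^u \oplus E^s$ because its $V_\bR$ and $iV_\bR$ parts each individually fail the area constraint. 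Thus $\bar\omega$ spans a one-dimensional central line $E^c$, and $T(\cM_1) = E^u \oplus E^c \oplus E^s$ provided no further central directions appear.

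The main obstacle is to verify that $E^c$ is exactly one-dimensional. Additional central (bounded-norm) directions can only arise from one-parameter subgroups of symmetries of $\cM$ commuting with $g_t$ and preserving every component's area. In the multi-component setting, the natural candidates are the $(k-1)$-parameter family of \emph{differential Teichm\"uller flows} $(g_{t_1},\ldots,g_{t_k})$ with $\sum t_i = 0$, whose infinitesimal generators at $(X,\omega)$ take the form $(\epsilon_1\bar\omega_1,\ldots,\epsilon_k\bar\omega_k)$ with $\sum\epsilon_i = 0$. If any such generator lay in $T(\cM)$, it would supply a $g_t$-central, area-preserving direction beyond $\bar\omega$. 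To exclude this I would invoke the second conclusion of Theorem \ref{T:prime}: in $\cM$, the absolute periods of any component locally determine those of every other via a holomorphic map $\phi$. Diagonal $g_t$-invariance of $\cM$ forces $\phi$ to be $g_t$-equivariant (in fact presentable by a real matrix), and then applying any nontrivial differential flow would require $g_{t_j}\circ\phi = \phi\circ g_{t_1}$ for some $t_j \ne t_1$, which fails immediately. Hence only the diagonal flow is tangent to $\cM$, and $E^c$ is one-dimensional. Uniformity of the hyperbolic estimates on compact subsets of $\cM_1$ then follows from continuity of the splitting $V_\bR\oplus iV_\bR$ in period coordinates and compactness of the center projection, with Lemmas \ref{L:measure} and \ref{L:finite} providing the invariant-measure framework on which the hyperbolicity statement is dynamically meaningful.
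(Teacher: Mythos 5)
There is a genuine gap here, on two counts. First, the assertion that $T(\cM)=V_\bR\oplus iV_\bR$ is a ``$g_t$-invariant splitting of pure weights $\pm 1$'' is only true in one fixed period chart. Uniform hyperbolicity on a compact set is a statement about return maps, whose derivative is $\mathrm{diag}(e^t,e^{-t})$ composed with Gauss--Manin parallel transport (the Kontsevich--Zorich cocycle), and the latter can itself grow or decay like $e^{\pm t}$: on the tautological directions it exactly cancels the $e^{\pm t}$, so for instance $\Re(\omega)\in V_\bR$ has \emph{bounded} Hodge norm along the flow rather than growing like $e^t$. The real analytic content of the lemma is the spectral gap --- on classes whose absolute part is symplectically orthogonal to the relevant tautological direction, the cocycle grows at most like $e^{(1-\alpha)t}$ along trajectories that recur to a compact set (Forni, Veech; see \cite[Theorem 7.3]{EMR}) --- and your argument never invokes it; ``continuity of the splitting in period coordinates and compactness'' cannot substitute for it.

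Second, your $E^s=iV_\bR\cap\ker(dA)$ is cut out by the single condition $\sum_i\langle \Re(\omega_i),p(v_i)\rangle=0$, which does not force $\langle \Re(\omega_i),p(v_i)\rangle=0$ componentwise; a vector such as $\bigl(iA_2\Im(\omega_1),\,-iA_1\Im(\omega_2),\,0,\dots,0\bigr)$, if it lay in $T(\cM)$, would belong to your $E^s$ yet have bounded norm under the flow. You correctly spot these ``differential Teichm\"uller flow'' directions as the danger, but your exclusion argument conflates a tangent vector lying in $T(\cM)$ with equivariance of the period-determination map $\phi$ under a group action, and in any case it only treats vectors whose components are exact multiples of $i\Im(\omega_i)$. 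What is actually needed --- and what the paper proves --- is the transfer statement: for $(\lambda_1,\dots,\lambda_k)\in p(T_{(X,\omega)}(\cM))$, $\lambda_i$ is symplectically orthogonal to $\Re(\omega_i)$ for one $i$ if and only if this holds for every $i$. The paper establishes this by combining Poincar\'e recurrence (via Lemma \ref{L:finite}), the injectivity of the component projections supplied by the second conclusion of Theorem \ref{T:prime}, bounded distortion of Hodge norms on a compact set, and the dichotomy between $O(e^{(1-\alpha)t})$ and $\asymp e^{t}$ growth. That dynamical step, which is where the hypotheses of the lemma actually enter, has no counterpart in your proposal.
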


The statement will be clarified in the proof. This proof and the next assume some familiarity with the theory of the Teichm\"uller geodesic flow in the connected case; a suitable introduction is the survey \cite[Sections 3,4]{ForniMatheus:Intro}. For example, the reader should be familiar with the fact that each Lyapunov exponent $L$ of the Kontsevich-Zorich cocycle contributes two Lyapunov exponents  to the Teichm\"uller geodesic flow, namely $L \pm 1$. }

{\begin{proof}
Again we assume $\cM\subset \cH_1 \times \cdots \times\cH_k$. 

We will build on the connected case. Suppose $(Y, \eta)$ is a connected unit area translation surface. Then, if $\| \cdot \|_t$ denotes the relative Hodge norm on the relative cohomology of $g_t(Y,\eta)$, then it is known that $\|[\Im(\eta)]\|_t = e^t$, and, if $\lambda$ is a relative cohomology class such that $p(\lambda)$ is symplectically orthogonal to $\Re(\eta)$, and the geodesic $\{g_t(Y,\eta), t\in [0,T]\}$ starts, ends, and spends at least half its time in a compact set, then $\|\lambda\|_t \leq e^{(1-\alpha)t}$, where $\alpha>0$ depends on the compact set. See \cite[Theorem 7.3]{EMR} for this statement, which implies the uniform hyperbolicity on compact sets, and the references in \cite{EMR} to work such as \cite{V, F, AF} with previous similar statements.  

We wish to show a similar statement for $(X,\omega) = ((X_1, \omega_1), \ldots, (X_k, \omega_k))$, using the norm on relative cohomology which is  the sum of the relative Hodge norms in each component. This will follow formally from the connected case once we establish the following claim: If $(\lambda_1, \ldots, \lambda_k) \in p(T_{(X,\omega)}(\cM))$, then for any $1\leq i, j \leq k$ we have that $\lambda_i$ is symplectically orthogonal to $\Re(\omega_i)$ if and only if $\lambda_j$ is symplectically orthogonal to $\Re(\omega_j)$. 

By continuity, it suffices to show this for $(X,\omega)$ in a dense set. So, using Lemma \ref{L:finite} and Poincare recurrence, we can assume that the $g_t$ orbit of $(X,\omega)$ recurs to a compact set.  The maps $(\lambda_1, \ldots, \lambda_k) \mapsto \lambda_i$ are injective by our assumption that $\cM$ satisfies the second conclusion of Theorem \ref{T:prime}, and on this compact set we can assume that they distort norms by at most a constant multiplicative factor. If $\lambda_i$ is symplectically orthogonal to $\Re(\omega_i)$, its norm grows at most like $e^{(1-\alpha)t}$. The bounded norm distortion then implies that, on the unbounded returns to this compact set, the norm of $\lambda_j$ is $O(e^{(1-\alpha)t})$. But if $\lambda_j$ were not symplectically orthogonal to $\Re(\omega_j)$, it would be a non-zero multiple of $\Im(\omega_j)$ plus something orthogonal, and the norm would grow like a constant times $e^t$, giving a contradiction. 

Now, to conclude the proof, define a subspace of $H^1(X, \Sigma,i\bR)\cap T_{(X,\omega)}(\cM)$ consisting of those $v=(v_1, \ldots, v_k)$ for which $p(v_1)$ is symplectically orthogonal to $\Re(\omega_1)$. Any  such $v$ is a tangent vector to the stratum, and will have uniform exponential decay  when pushed forward by the derivative of Teichm\"uller geodesic flow; it hence is in the stable manifold of $(X,\omega)$. The exponential decay comes from the $e^{-t}$ in the definition of Teichm\"uller geodesic flow, which is definitively more powerful than the growth $O(e^{(1-\alpha)t})$ of $v$ under parallel transport. Note that the subspace has dimension $\frac12 \dim_\bR T_{(X,\omega)}(\cM)-1$; double this (for stable and unstable) plus 1 (for the flow direction) is the dimension of the unit area locus in $\cM$. 
\end{proof}}

{\begin{cor}\label{C:ergodic}
Under the same assumptions as in Lemma \ref{L:hyperbolic}, the $g_t$ action on the unit area locus is ergodic. In particular, the ratios of areas of components is locally constant on the unit area locus. 
\end{cor}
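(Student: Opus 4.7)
The plan is to deduce ergodicity from the uniform hyperbolicity of Lemma \ref{L:hyperbolic} by a standard Hopf-style argument, and then deduce local constancy of area ratios as a formal consequence.

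First, I would reduce to a finite-measure setting. The subsets $A_\e$ of the unit area locus, where every component has area at least $\e$, are $g_t$-invariant because each $g_t \in SL(2,\bR)$ preserves the area of every individual component. By Lemma \ref{L:finite} these sets have finite measure, and the unit area locus is their union as $\e \to 0$; so it suffices to show that $g_t$ is ergodic on each $A_\e$ with respect to the Lebesgue class measure provided by Lemma \ref{L:measure}.

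Second, I would run the Hopf argument on $A_\e$. Lemma \ref{L:hyperbolic} exhibits the stable and unstable distributions for $g_t$ explicitly in affine period coordinates: the stable space at $(X,\omega)$ is the set of purely imaginary tangent vectors whose projection to absolute cohomology is symplectically orthogonal to $\Re(\omega)$, and similarly for the unstable space, with uniform contraction/expansion rates on compact subsets of $A_\e$. These distributions integrate to smooth (affine in local period coordinates) foliations, which are automatically absolutely continuous with smooth holonomies. The classical Hopf argument then applies: for any bounded $g_t$-invariant measurable function, the Birkhoff averages in forward and backward time agree almost everywhere, and uniform hyperbolicity shows that these averages are constant along local stable and unstable leaves respectively; absolute continuity of the foliations and local accessibility then force the function to be locally constant, hence constant on each connected component of $A_\e$. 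This is exactly ergodicity of $g_t$ on $A_\e$, and taking $\e \to 0$ yields ergodicity on the full unit area locus.

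For the second assertion, the ratios of component areas define a continuous $g_t$-invariant function on the unit area locus. By ergodicity on each $A_\e$, this function is essentially constant on each connected component of $A_\e$; since it is continuous and the invariant measure is in the Lebesgue class, it is in fact locally constant there, and letting $\e \to 0$ gives the conclusion on the full unit area locus.

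The main obstacle is the rigorous execution of the Hopf argument in the orbifold/multi-component setting, in particular verifying absolute continuity and accessibility of the stable/unstable foliations. However, both follow from the fact that in affine period coordinates the foliations are linear, and the argument is parallel to the established treatment of the Teichm\"uller geodesic flow on a connected stratum (as surveyed in \cite{ForniMatheus:Intro} and used in \cite{EMR}); the hypothesis that $\cM$ is prime and satisfies the second conclusion of Theorem \ref{T:prime} is used only through Lemma \ref{L:hyperbolic}, so no further invariant-subvariety structure is needed here.
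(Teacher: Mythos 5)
Your proposal is correct and follows essentially the same route as the paper: restrict to the finite-measure loci $A_\e$ from Lemma \ref{L:finite}, run the Hopf argument using the uniform hyperbolicity of Lemma \ref{L:hyperbolic}, and then let $\e \to 0$, deducing constancy of the area ratios from invariance under the flow. The extra detail you supply on absolute continuity and accessibility of the stable/unstable foliations (which are affine in period coordinates, with the dimension count in Lemma \ref{L:hyperbolic} giving accessibility) is exactly what the paper delegates to the expository reference.
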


\begin{proof}
We first show ergodicity on the locus where each component has area at least $\e$, which is finite measure. There the result follows from the Hopf argument and Lemma \ref{L:hyperbolic}; see for example \cite[Sections 3,4]{ForniMatheus:Intro} for an expository treatment. Now, ergodicity on this locus implies that the ratio of areas must be constant, since the ratio is preserved by the flow. Since this is true for all $\e>0$, the ratios are in fact simply constant.     
\end{proof}

\begin{cor}\label{C:EV}
Under the same assumptions as in Lemma \ref{L:hyperbolic}, closed $g_t$ orbits are dense in $\cM$. The associated induced monodromy actions on relative cohomology have that the largest and smallest eigenvalues, namely $e^t$ and $e^{-t}$, have multiplicity 1.
\end{cor}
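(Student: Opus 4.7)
The plan is to deduce both conclusions from the uniform hyperbolicity of Lemma \ref{L:hyperbolic} combined with the ergodicity of $g_t$ on each finite-measure piece $A_\e$ of the unit area locus supplied by Lemma \ref{L:finite} and Corollary \ref{C:ergodic}. Density will come from a standard Anosov closing argument, while simplicity of the extreme eigenvalues follows from the exponential estimate $\|v\|_t \leq e^{(1-\alpha)t}$ used inside the proof of Lemma \ref{L:hyperbolic}, together with the rigidity supplied by the second conclusion of Theorem \ref{T:prime}.

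For density, given $(X_\infty, \omega_\infty) \in \cM$ in the unit area locus and a neighborhood $U$, I would pick $\e > 0$ strictly less than the area of every component of $(X_\infty, \omega_\infty)$, so that a sub-neighborhood of $(X_\infty, \omega_\infty)$ lies inside $A_\e$. Ergodicity of $g_t$ on the finite measure set $A_\e$ together with Poincar\'e recurrence gives that almost every orbit in $A_\e$ returns arbitrarily close to its starting point; uniform hyperbolicity on compact subsets of $A_\e$ then lets one invoke the Anosov closing lemma to shadow each sufficiently close return by a closed $g_t$-orbit passing through $U$. Since $\e > 0$ was arbitrary and the $\bR_{>0}$ area-scaling action is built into $\cM$, this yields density of closed orbits in all of $\cM$.

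For the multiplicity, let $\gamma$ be a closed orbit of period $T$ based at $(X,\omega)$ and let $\phi_*$ be the induced Kontsevich-Zorich monodromy around $\gamma$ on $T_{(X,\omega)}(\cM) \cap H^1(X,\Sigma;\bR)$. Because $g_T(X,\omega) = (X,\omega)$, comparing period coordinates at $t=0$ and $t=T$ yields $\phi_* \Re(\omega) = e^T \Re(\omega)$ and $\phi_* \Im(\omega) = e^{-T}\Im(\omega)$, so both extreme eigenvalues occur with multiplicity at least one. Suppose $v$ is another eigenvector of $\phi_*$ with eigenvalue $e^T$. The restriction of $\phi_*$ to $\ker(p) \cap T(\cM)$ is a finite-order permutation of marked points, whose eigenvalues all have absolute value $1 \neq e^T$, so $p(v) \neq 0$. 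Apply the dichotomy established in the proof of Lemma \ref{L:hyperbolic} to $v = (v_1, \ldots, v_k)$: if every $p(v_i)$ were symplectically orthogonal to $\Re(\omega_i)$, the norm of $v$ along the compact orbit $\gamma$ would grow only like $e^{(1-\alpha)T}$, contradicting the eigenvalue $e^T$; hence some $p(v_i)$ is non-orthogonal, and by the simultaneous characterization proved there the same is true for all $i$. Modulo the symplectic complement, each $p(v_i)$ is a nonzero scalar multiple of $\Re(\omega_i)$, and the second conclusion of Theorem \ref{T:prime} (injectivity of the projection of $p(T(\cM))$ to any one component's absolute cohomology) forces $p(v)$ into the line through $p(\Re(\omega))$. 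Subtracting the appropriate multiple of $\Re(\omega)$ produces a purely relative $e^T$-eigenvector, which must vanish by the observation above, so $v \in \bR \cdot \Re(\omega)$; the argument for $e^{-T}$ is symmetric.

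The main technical obstacle is that the Anosov closing lemma is classically stated for uniformly hyperbolic flows on compact metric spaces, whereas our hyperbolicity is only uniform on compact subsets of the non-compact space $\cM$. Choosing $\e$ compatibly with the target neighborhood, as above, reduces to the standard setting and ensures the shadowing orbit stays inside $A_\e$; this is delicate but essentially routine. The multiplicity argument then amounts to combining the hyperbolic estimate along the closed orbit with the rigidity from Theorem \ref{T:prime}, both of which are already in hand.
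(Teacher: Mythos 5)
Your proposal follows essentially the same route as the paper, whose own proof is a two-sentence citation: density via hyperbolicity, ergodicity, and a closing lemma as in the connected case of \cite[Theorem 8]{EMR}, and multiplicity one directly from Lemma \ref{L:hyperbolic}. Your expansion of both halves is sound, with one local imprecision in the multiplicity argument: the injectivity from the second conclusion of Theorem \ref{T:prime} does not by itself force $p(v)$ onto the line $\bR\, p(\Re(\omega))$, since each $p(v_i)$ could a priori have a nonzero part symplectically orthogonal to $\Re(\omega_i)$. The repair is already in your hands: set $v' = v - c_1\Re(\omega)$, note that $p(v'_1)$ is symplectically orthogonal to $\Re(\omega_1)$, invoke the dichotomy to get the same for every component, and then apply the norm estimate $\|v'\|_{nT} \leq e^{(1-\alpha)nT}\|v'\|_0$ along the closed orbit to conclude $v' = 0$ (rather than concluding only that $v'$ is ``purely relative'' and appealing to the finite-order action on $\ker(p)$).
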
 

\begin{proof}
The first statement follows from  Lemma \ref{L:hyperbolic} and ergodicity, using a closing lemma, as in the connected case discussed in \cite[Theorem 8]{EMR}. The multiplicity 1 statement follows from Lemma \ref{L:hyperbolic}.  
\end{proof}

\begin{rem}
The above results will be used in the proof of the second statement of Theorem \ref{T:prime}, which is essentially by induction on the number of components. Once that is concluded, we will know the above results for all prime invariant subvarieties. Note that a product invariant subvariety can never have that the diagonal $g_t$ action is hyperbolic, because it is part of an $\bR^2$ action. 

Furthermore, by Corollary \ref{C:ergodic} and Lemma \ref{L:finite}, the $SL(2,\bR)$ invariant measure on any prime invariant subvariety is finite. Since every invariant subvariety is (essentially) a product of primes, and the measure is a product, we get that the $SL(2,\bR)$ invariant measure is always finite, without need to restrict to the locus where all components of the surface have area at least $\e$.
\end{rem}}

\subsection{Theorem \ref{T:prime}}

We begin with the easiest part of Theorem \ref{T:prime}.

\begin{proof}[Proof of Theorem \ref{T:prime}, first statement]
Suppose that $$\cN\subset \cH_1\times \cdots\times \cH_k,$$ where each $\cH_i$ is a stratum of single component surfaces. 

{Set $\cN_i= \overline{\pi_i(\cN)}$. Note that $\pi_i(\cN)$ is a constructible set, so its Zariski closure is equal to its analytic closure. Set $\cN_i' = \overline{\cN_i\setminus \pi_i(\cN)}$. Both $\cN_i$ and $\cN_i'$ are invariant subvarieties, so this gives the result. }
\end{proof}

We now give an example to show that $ \pi_i(\cN)$ need not be closed. 

\begin{ex}
Let $\cM$ denote the set of pairs 
$$((X, \omega, \{p_1\}), (X,\omega, \{p_2, p_3\}))\in \cH(2,0)\times \cH(2,0,0)$$
such that $p_1$ is equal to $p_2$ or $p_3$, and $p_2$ and $p_3$ are interchanged by the hyperelliptic involution. Then the projection of $\cM$ to the first factor is equal to $\cH(2,0)$ minus the locus where the marked point $p_1$ is a Weierstrass point. 
\end{ex}

{\begin{rem} $\cN_i$ is locally described by a finite union of linear subspaces. At a generic point, there is just a single linear subspace; the other points are called self-crossing points. If $\cN_i$ has no self-crossing points, then one can show using period coordinates that $\pi_i(\cN)$ is open in $\cN_i$ and hence is equal to $\cN_i \setminus \cN_i'$. Even if $\cN_i$ has self-crossings, it is possible to recover such a statement at the cost of lifting to the smooth orbifold whose properly immersed image is $\cN_i$. 
\end{rem}}

\begin{lem}
An invariant subvariety $\cM$ is not prime if and only if  some (equivalently, any) $(X,\omega)\in \cM$ can be written as $(X,\omega)=(X_1, \omega_1)\cup (X_2, \omega_2)$, where the $(X_i, \omega_i)$ are disjoint and are a non-empty union of components, and we have 
$$T_{(X,\omega)}{(\cM)} = (T_{(X,\omega)} (\cM) \cap H^1(X_1, \Sigma_1)) \oplus (T_{(X,\omega)}{(\cM)} \cap H^1(X_2, \Sigma_2)),$$
where $\Sigma_i$ is the set of zeros of $\omega_i$. 

In this case, if $\cM'$ is the minimal cover of $\cM$ on which we can divide the components into two subsets consistent with $(X,\omega)=(X_1, \omega_1)\cup (X_2, \omega_2)$, then $\cM' = \cM_1\times \cM_2$ for invariant subvarieties $\cM_1, \cM_2$.      
\end{lem}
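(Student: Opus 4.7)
The plan is to dispatch the forward direction immediately and focus on the converse, which I will attack by lifting $\cM$ to the unquotiented product, propagating the pointwise tangent-space splitting to a global splitting, and then identifying the resulting cover as a product of two invariant subvarieties by a dimension count.

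The forward direction is immediate: if $\cM$ is not prime then, after lifting, $\cM' = \cM_1 \times \cM_2$ with each $\cM_i$ supported on a labeled subset $I_i$ of components, and hence $T_{(X,\omega)}(\cM) = T(\cM_1) \oplus T(\cM_2)$ with $T(\cM_i) \subset \bigoplus_{j \in I_i} H^1(X_j, \Sigma_j)$ at every point, which gives the stated splitting.

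For the converse, I would first lift $\cM$ to an irreducible component $\tilde\cM$ of its preimage in the unquotiented product $\cH(\kappa_1) \times \cdots \times \cH(\kappa_k)$. A lift of $(X,\omega)$ encodes a partition $\{1,\ldots,k\} = I_1 \sqcup I_2$ together with a splitting $T_{(X,\omega)}(\tilde\cM) = T_1 \oplus T_2$ with $T_i \subset \bigoplus_{j \in I_i} H^1(X_j, \Sigma_j)$. I would then propagate this pointwise splitting to a global direct-sum decomposition of $T(\tilde\cM)$ as a sub-local-system of $\bigoplus_j H^1(X_j, \Sigma_j)$, using that $T(\tilde\cM)$ is locally cut out by linear equations in period coordinates, that parallel transport on the unquotiented cover preserves the ambient direct sum (because the component labels are globally defined there), and that $\tilde\cM$ is irreducible, so that an algebraic splitting at one point spreads to all of $\tilde\cM$ by analytic continuation. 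Next I would set $\cM_i = \overline{\pi_{I_i}(\tilde\cM)}$, where $\pi_{I_i}$ projects onto the $I_i$-components. Each $\cM_i$ is irreducible and $GL^+(2,\bR)$-invariant, and algebraic by Chevalley plus closure, hence is an invariant subvariety. Since the derivative $d\pi_{I_i}$ is the identity on $T_i$ and kills the complementary summand, $T(\cM_i) = T_i$ at a generic point, so $\dim \cM_i = \dim T_i$. Thus $\dim(\cM_1 \times \cM_2) = \dim T_1 + \dim T_2 = \dim \tilde\cM$; together with $\tilde\cM \subset \cM_1 \times \cM_2$ and irreducibility of both sides, this forces $\tilde\cM = \cM_1 \times \cM_2$. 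The minimal cover $\cM'$ is then obtained by quotienting $\tilde\cM$ by the subgroup of $S_k$ fixing $\boldkappa$ and preserving the partition $I_1 \sqcup I_2$; that subgroup acts factor-wise, so $\cM'$ itself remains a product of invariant subvarieties. As a bonus, the identity $\tilde\cM = \cM_1 \times \cM_2$ yields the ``some $\Leftrightarrow$ any'' clause for free.

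The main obstacle I anticipate is the global propagation step: the hypothesis is only pointwise, and the work lies in upgrading an algebraic splitting at a single fiber to a direct-sum decomposition of the sub-local-system $T(\tilde\cM)$. Passing to the unquotiented product is what makes this possible, since only there do the component labels give an intrinsic ambient decomposition that is preserved by parallel transport; irreducibility of $\tilde\cM$ then lets the local algebraic splitting spread globally. Everything else is a standard dimension count and bookkeeping with the symmetric-group quotient.
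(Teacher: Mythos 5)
Your proposal is correct and follows essentially the same route as the paper: lift, set $\cM_i=\overline{\pi_i(\cdot)}$, note the containment in $\cM_1\times\cM_2$, and conclude by irreducibility (the paper closes the gap by observing via period coordinates that $\cM'$ contains an open subset of $\cM_1\times\cM_2$, whereas you propagate the splitting globally by flatness and run a dimension count -- a cosmetic difference). The only soft spot is the final descent remark: the stabilizer preserving the partition $I_1\sqcup I_2$ sits inside $S_{I_1}\times S_{I_2}$ but need not itself be a product of subgroups, so "acts factor-wise, hence the quotient is a product" deserves a word of justification -- though the paper's own proof elides exactly the same point.
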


More concisely, $\cM$ is not prime if and only if its tangent space is a direct sum in a way compatible with the decomposition into components, and it suffices to check this at a single point. 

\begin{proof}
Suppose we have the direct sum decomposition of $T_{(X,\omega)}{(\cM)}$.
By definition of $\cM'$, we have $\cM'\subset \cH_1\times \cH_2$, where $\cH_i$ are strata of multi-component surfaces, and $(X_i,\omega_i)\in \cH_i$. 

Let $\pi_i\colon \cH_1\times \cH_2\to \cH_i$ be the coordinate projections, and define $\cM_i=\overline{\pi_i(\cM')}$, so $\cM'\subset \cM_1\times \cM_2$. Using period coordinates we can see that $\cM'$ contains an open subset of  $\cM_1\times \cM_2$. Because both varieties are irreducible and closed, $\cM'= \cM_1\times \cM_2$.
\end{proof}

\begin{cor}
Any invariant subvariety $\cM$ of a product of strata is a product of prime invariant varieties and the product decomposition is unique. 
\end{cor}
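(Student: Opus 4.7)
The plan is to deduce both existence and uniqueness from the preceding lemma, which identifies non-primality with a direct-sum decomposition of the tangent space compatible with a partition of the components.

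For existence I would induct on the number of components. If $\cM$ is prime there is nothing to do. Otherwise the preceding lemma yields, on a finite cover, a splitting $\cM_1\times \cM_2$ where each factor is an invariant subvariety of a product of strictly fewer strata. Applying the inductive hypothesis to each factor expresses $\cM$ as a product of primes, and concatenating the two lists gives the desired decomposition of $\cM$.

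For uniqueness, the plan is to characterize the prime factors intrinsically from the tangent space $V := T_{(X,\omega)}(\cM)$ at any single point of the lift. For each subset $A$ of the components, set $V_A := V\cap H^1(X_A, \Sigma_A)$, and call a partition $\{A_1, \ldots, A_r\}$ of the components \emph{compatible} if $V = \bigoplus_i V_{A_i}$. Any product decomposition of $\cM$ into primes manifestly realizes a compatible partition, and the parts of that partition must be minimal among subsets $A$ for which $V_A$ appears as a summand of a compatible decomposition. So uniqueness reduces to showing that there is a unique finest compatible partition. This in turn follows once one verifies that the common refinement of two compatible partitions $\{A_i\}$ and $\{B_j\}$ is again compatible. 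To do this, I would take $v \in V$, decompose $v=\sum_i v_{A_i}$ using $\{A_i\}$, and then each $v_{A_i}=\sum_j w_{i,j}$ with $w_{i,j}\in V_{B_j}$; a component-by-component argument using the natural splitting $H^1(X,\Sigma) = \bigoplus_\alpha H^1(X_\alpha, \Sigma_\alpha)$ over single components, together with the fact that $v_{A_i}$ has zero contribution along any single component $\alpha\notin A_i$, would force each $w_{i,j}$ to be supported in $A_i\cap B_j$, hence $w_{i,j}\in V_{A_i\cap B_j}$.

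The principal difficulty I anticipate is the bookkeeping in the common-refinement step: although the verification is elementary linear algebra, one must simultaneously track two direct-sum decompositions of $V$ and leverage the ambient decomposition of $H^1(X,\Sigma)$ over single components to link them. The quotient by a subgroup of the symmetric group inherent to strata of multi-component surfaces can be handled once and for all by passing to the lift at the outset, so it does not pose an additional obstacle.
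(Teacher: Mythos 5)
Your proposal is correct and follows essentially the same route as the paper: existence by iterating the preceding lemma, and uniqueness as a linear-algebra argument on the tangent space (the paper simply declares this "an exercise in linear algebra," and your common-refinement argument for compatible partitions is a valid way to carry out that exercise). The support-based verification that $w_{i,j}\in V_{A_i\cap B_j}$ goes through exactly as you sketch, so no gap remains.
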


\begin{proof}
The previous lemma gives that $\cM$ can be written as a product if it is not prime, and we can iterate this to get that $\cM$ is a product of prime invariant varieties. The uniqueness is an exercise in linear algebra.
\end{proof}

\begin{ex}
Let $\cH$ be a stratum. Then the diagonal embedding of $\cH$ in $\cH\times \cH$ is prime. 
\end{ex}

\begin{ex}
Let $\cH$ be a stratum. Consider the locus $\cM$ of pairs $((X_1, \omega_1), (X_2, \omega_2))$ for which there exists some $(X,\omega)\in \cH$ such that $(X_1,\omega_1)$ is a triple cover of $(X,\omega)$ simply branched over four points and $(X_2, \omega_2)$ is a double cover of $(X,\omega)$ simply branched over two points. The branch points are allowed to be arbitrary points of $X$ that are not zeros of $\omega$. Then $\cM$ is a prime invariant variety, but each of the two coordinate projections from $\cM$ are infinite to one. (None of the choices in this example are important.) 
\end{ex}

To proceed, we  require the following result from \cite{Wfield}. As motivation, let us mention that the proof of the second statement of Theorem \ref{T:prime} will roughly speaking be by induction, and ``smaller" prime orbit closures will appear that are known to satisfy this second statement.

\begin{thm}\label{T:simplicity} 
Let $\cM$ be an (irreducible) invariant subvariety of single component translation surfaces, and let $\cM'$ be a proper invariant subvariety (possibly with several components). Then any proper flat subbundle of $T(\cM)$ defined over a finite cover of $\cM\setminus \cM'$ is contained in $\ker(p)$. 

The same statement holds if $\cM$ is a prime invariant subvariety of multi-component translation surfaces satisfying the second conclusion of Theorem \ref{T:prime}. 
\end{thm}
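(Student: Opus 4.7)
The plan is to adapt the proof of the single-component case from \cite{Wfield} to the multi-component setting, using the dynamical results of Section \ref{SS:multidyn}.

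Recall the structure of the argument in the single-component case. Assume $W \subset T(\cM)$ is a flat subbundle with $p(W) \neq 0$; the goal is to show $W = T(\cM)$. The argument combines four ingredients: (i) Deligne's semisimplicity theorem, which lets one write $p(T(\cM)) = p(W) \oplus U$ as a direct sum of flat subbundles; (ii) density of closed Teichm\"uller geodesics in $\cM$; (iii) simplicity of the top and bottom Lyapunov exponents of the Kontsevich--Zorich cocycle on $p(T(\cM))$, whose eigenspaces on any closed $g_t$ orbit are spanned by $[\Re\omega]$ and $[\Im\omega]$ respectively; and (iv) ergodicity of the Teichm\"uller geodesic flow on the relevant invariant probability measure. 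Using (iii), one sees that at each closed orbit $[\Re\omega]$ must lie either entirely in $p(W)$ or entirely in $U$. A combination of $SO(2)$-equivariance, (iv), and (ii) then forces the tautological plane $\span\{[\Re\omega], [\Im\omega]\}$ into $p(W)$ everywhere. Invoking Filip's result that $p(T(\cM))$ is the smallest sub-Hodge structure containing the tautological plane gives $p(W) = p(T(\cM))$. Finally, the triviality of $\ker(p)\cap T(\cM)$ as a flat bundle on a finite cover (as used in the proof of Lemma \ref{L:measure}), together with $GL^+(2,\bR)$-equivariance, upgrades this equality of projections to $W = T(\cM)$.

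In the multi-component prime case satisfying the second conclusion of Theorem \ref{T:prime}, these four ingredients are provided respectively by: Deligne's theorem applied componentwise (with the prime hypothesis ensuring no product structure to exploit); the density of closed $g_t$ orbits from Corollary \ref{C:EV}; the simplicity of the top and bottom monodromy eigenvalues on such orbits, again from Corollary \ref{C:EV}, which in turn rests on the uniform hyperbolicity of Lemma \ref{L:hyperbolic}; and the ergodicity of Corollary \ref{C:ergodic}. With these substitutions, the argument from \cite{Wfield} proceeds with only notational changes.

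The main obstacle I anticipate is ensuring that the $SO(2)$-equivariance and ergodicity propagation of step (iii) transfers cleanly to the multi-component setting, where the invariant measure is only locally finite on the full unit area locus and becomes finite only after restricting to loci with bounded area ratios; Poincar\'e recurrence and closing-lemma arguments must be arranged to stay inside such loci. A secondary subtlety lies in the final passage from $p(W) = p(T(\cM))$ to $W = T(\cM)$, where one must control relative cohomology across several components and may need to invoke the second conclusion of Theorem \ref{T:prime} directly to transfer absolute-period constraints on one component to the others.
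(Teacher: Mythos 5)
The paper does not give its own proof of this theorem: it cites \cite{Wfield} and simply remarks that the argument there applies verbatim to bundles over $\cM\setminus\cM'$, and, given the dynamical results of Section \ref{SS:multidyn} (the invariant measure of Lemma \ref{L:measure}, finiteness from Lemma \ref{L:finite}, hyperbolicity from Lemma \ref{L:hyperbolic}, ergodicity from Corollary \ref{C:ergodic}, and density of closed orbits with simple extreme eigenvalues from Corollary \ref{C:EV}), to the prime multi-component case. Your proposal is precisely this adaptation, with the right ingredients matched to the right sources, so it takes essentially the same approach as the paper.
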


Here ``proper'' just means that the subbundle is not all of $T(\cM)$, and $p$ is the usual map from  relative cohomology to absolute cohomology, both viewed as bundles over $\cM$.  In \cite{Wfield} the theorem is stated for bundles over $\cM$ rather than $\cM\setminus \cM'$, but the proof gives the more general statement. Similarly it is not stated for the prime (multi-component) case, but the proof is identical given the results of Section \ref{SS:multidyn}.

{
\begin{cor}\label{C:fix}
Suppose that $\cM$ is an invariant subvariety of  $\cM_1\times \cM_2$ whose projection to each factor is dominant, where $\cM_1$ is an invariant subvariety of single component translation surfaces, and $\cM_2$ is an invariant subvariety of multi-component surfaces. Let $$(X, \omega) = ((X_1, \omega_1), (X_2, \omega_2))\in \cM.$$ Suppose that the subspace $$p(T_{(X,\omega)}(\cM)) \subset p(T_{(X_1,\omega_1)}(\cM_1)) \oplus  p(T_{(X_2,\omega_2)}(\cM_2))$$ contains a  vector which is zero in the second component but non-zero in the first. Then $\cM=\cM_1\times \cM_2$.

The same statement holds if $\cM_1$ is a prime invariant subvariety of multi-component translation surfaces satisfying the second conclusion of Theorem \ref{T:prime}. 
\end{cor}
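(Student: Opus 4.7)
The plan is to apply Theorem~\ref{T:simplicity} to $\cM_1$ twice: first to establish that the absolute tangent space of $\cM$ is the full product, and then to upgrade this to the full tangent space. For the first application, I would introduce the flat subbundle
$$K := \ker\bigl(p_2\circ d\pi_2\bigr)\cap T(\cM)$$
of $T(\cM)$, defined on the open dense locus where its rank is maximal; the hypothesis says precisely that at $(X,\omega)$ there is $w\in K$ with $p_1(d\pi_1(w))\neq 0$. Although $d\pi_1(K)$ is a priori only a subbundle of the pullback $\pi_1^*T(\cM_1)$, parallel transport in $\pi_1^*T(\cM_1)$ is trivial along paths in fibers of $\pi_1$ while $K$ is preserved by the Gauss-Manin connection on $T(\cM)$, so $d\pi_1(K_y)$ is constant in $T_{(X_1,\omega_1)}(\cM_1)$ as $y$ varies in a connected component of a fiber of $\pi_1$. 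Spanning over monodromy translates produces a well-defined, $GL^+(2,\bR)$-invariant, flat subbundle $V\subset T(\cM_1)$ on an open dense subset of $\cM_1$. Since $p_1(V)\neq 0$ at $(X_1,\omega_1)$ by hypothesis, Theorem~\ref{T:simplicity} (applicable because $\cM_1$ is either single-component or a prime multi-component invariant subvariety satisfying the second conclusion of Theorem~\ref{T:prime}) forces $V=T(\cM_1)$. Combined with dominance of $\pi_2$, a short linear algebra computation then yields the absolute equality $p(T(\cM))=p_1(T(\cM_1))\oplus p_2(T(\cM_2))$.

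For the second application, I would consider the flat subbundle $W := T(\cM)\cap(T(\cM_1)\oplus 0)\subset T(\cM_1)$, defined by an analogous descent argument over an open dense subset of $\cM_1$. Because absolute periods of the two factors are now independent on $\cM$, any $\cM$-equivalence class $\cC$ of $\cM$-parallel cylinders containing a cylinder in $(X_1,\omega_1)$ cannot contain any cylinder in $(X_2,\omega_2)$; a cross-class would give a nontrivial linear dependence between absolute periods in the two factors, contradicting the previous paragraph. By the Cylinder Deformation Theorem, the associated cylinder deformation tangent vector $\xi_\cC=\sum h_i\gamma_i^*$ lies in $T(\cM)$ and has the form $(w_1,0)$, hence in $W$; choosing $\cC$ to contain a cylinder whose core curve is nontrivial in absolute homology (which always exists on a generic surface in $\cM_1$) ensures $p_1(w_1)\neq 0$. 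Thus $p_1(W)\neq 0$, and Theorem~\ref{T:simplicity} applied to $W$ forces $W=T(\cM_1)$, i.e.\ $T(\cM_1)\oplus 0\subset T(\cM)$.

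Finally, at any point $y=(y_1,y_2)\in \cM$ lying over the open subset where $W=T(\cM_1)$, the tangent space to the fiber $\pi_2^{-1}(y_2)\cap\cM$ contains $T_{y_1}(\cM_1)\oplus 0$, so the fiber contains an open neighborhood of $y_1$ in $\cM_1\times\{y_2\}$ and, by irreducibility of $\cM_1$, equals all of $\cM_1\times\{y_2\}$. As $y_2$ ranges over the dense subset $\pi_2(\cM)\subset\cM_2$, this yields $\cM\supset\cM_1\times\cM_2$, and hence equality. The main obstacles are the rigorous verification of the two descent arguments for $V$ and $W$ (especially handling possibly disconnected fibers of $\pi_1$ and the passage from the pullback to a bundle on $\cM_1$ itself), and the use of density of cylinders with nontrivial absolute homology to guarantee $p_1(W)\neq 0$ at a generic point of $\cM_1$.
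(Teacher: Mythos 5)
Your overall architecture (use Theorem \ref{T:simplicity} to promote a nonzero flat subbundle of $T(\cM_1)$ not contained in $\ker(p)$ to all of $T(\cM_1)$, then conclude by irreducibility and a dimension count) matches the paper's, and your $W=T(\cM)\cap(T(\cM_1)\oplus 0)$ is exactly the bundle $K$ the paper works with. The divergence is in how you produce a nonzero element of $W$ with nonzero $p$-image, and your route has a genuine gap there: you invoke the Cylinder Deformation Theorem for the multi-component invariant subvariety $\cM\subset\cM_1\times\cM_2$. That theorem is only available (in this paper and in \cite{Wcyl}) for single-component orbit closures; its known proof rests on the Eskin--Mirzakhani(--Mohammadi) machinery, whose extension to multi-component surfaces is precisely what is unproven -- this is why Theorem \ref{T:main} needed a new proof in the first place. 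Note also that the single-component CDT applied to $\cM_1$ does not suffice, since you need the twist $\sum h_i\gamma_i^*$ to be tangent to $\cM$, not merely to $\cM_1\times\cM_2$. Moreover, Corollary \ref{C:fix} feeds into the induction proving Theorem \ref{T:prime}(2), and the dynamical substitutes of Section \ref{SS:multidyn} for multi-component $\cM$ are themselves conditional on that conclusion, so an appeal to dynamics for $\cM$ risks circularity. The paper avoids dynamics entirely: starting from $(v_1,v_2)\in T(\cM)$ with $p(v_2)=0\neq p(v_1)$, it uses Proposition \ref{P:agtop} together with the total irreducibility of $p(T(\cM_1))$ (a consequence of Theorem \ref{T:simplicity}) to find a loop in $\cM$ whose monodromy moves $p(v_1)$, and the triviality of $\ker(p)$ on a finite cover to ensure that loop fixes $v_2$; the difference $(v_1,v_2)-\gamma\cdot(v_1,v_2)$ is the desired element of $W$. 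This also renders your entire first stage (first establishing $p(T(\cM))=p(T(\cM_1))\oplus p(T(\cM_2))$) unnecessary.

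A second, more repairable issue concerns both of your descent steps: passing from a bundle that is only locally constant on connected components of fibers of $\pi_1$ to a bundle on $\cM_1$ by ``spanning over monodromy translates'' yields a conclusion ($V=T(\cM_1)$ as a span over the whole fiber) strictly weaker than what your subsequent arguments need, namely that $d\pi_1(K_y)$ (resp.\ $W_y$) equals $T(\cM_1)$ at each individual point $y$; a priori the span could be everything while each summand is proper. The correct move is not to span at all: since the fibers have finitely many components, the bundle descends to a finite cover of $\cM_1$, and Theorem \ref{T:simplicity} is stated precisely for flat subbundles over finite covers. This is what the paper does.
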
}

{The proof will using the following topological results from algebraic geometry, which can be found in \cite[Proposition 2.10]{Kollar}.
\begin{prop}\label{P:agtop}
The following statements hold:
\begin{enumerate}
\item Let $Y$ be a connected normal variety, and let $Z\subset Y$ be Zariski closed. If $\iota\colon Y\setminus Z\to Y$ denotes the inclusion, then $i_*(\pi_1(Y\setminus Z))=\pi_1(Y)$. 
\item Let $f\colon X\to Y$ be a surjective morphism between irreducible algebraic varieties, with $Y$ normal. Then $f_*(\pi_1(X))$ is finite index in $\pi_1(Y)$. 
\end{enumerate}
\end{prop}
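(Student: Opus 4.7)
For (1), the plan is to reduce to the case that $Y$ is smooth and then apply classical transversality. Since $Y$ is normal, the singular locus $Y^{\mathrm{sing}}$ has complex codimension at least two, hence real codimension at least four. Choosing a triangulation of $Y$ in which $Y^{\mathrm{sing}}$ and $Z$ are subcomplexes, I would push $1$- and $2$-cells off of $Y^{\mathrm{sing}}$ by general position, yielding $\pi_1(Y^{\mathrm{sm}}) \cong \pi_1(Y)$. Replacing $Y$ by $Y^{\mathrm{sm}}$ and $Z$ by $Z \cap Y^{\mathrm{sm}}$, I may assume $Y$ is a connected complex manifold and $Z$ is a proper closed analytic subset. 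Then $Z$ has real codimension at least $2$, so any loop and any null-homotopy can be perturbed to avoid $Z$ by standard transversality, giving the desired surjectivity $\iota_*(\pi_1(Y\setminus Z)) = \pi_1(Y)$.

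For (2), I would first invoke generic smoothness to choose a nonempty Zariski open $V \subset Y$ over which $f$ is smooth, and set $U = f^{-1}(V)$. Applying part (1) to the pairs $(Y, Y\setminus V)$ and $(X, X\setminus U)$, the inclusions induce surjections $\pi_1(V)\to\pi_1(Y)$ and $\pi_1(U)\to\pi_1(X)$, so it suffices to prove $(f|_U)_*\pi_1(U)$ has finite index in $\pi_1(V)$. I would then apply Stein factorization to write $f|_U = g\circ h$, where $h\colon U\to Y'$ has geometrically connected fibers and $g\colon Y'\to V$ is finite. Because $f|_U$ is smooth and $V$ is smooth, $g$ must be étale (finite plus flat plus unramified), so $g_*\pi_1(Y')$ is a finite-index subgroup of $\pi_1(V)$. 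The result then reduces to showing that $h_*\colon \pi_1(U)\to\pi_1(Y')$ is surjective.

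The main obstacle is this surjectivity of $h_*$: although $h$ is smooth with connected fibers, it is not in general proper, so Ehresmann's theorem does not immediately present $h$ as a locally trivial fibration that would yield the long exact sequence of homotopy groups. To circumvent this I would either choose a relative compactification $\bar h\colon \bar U\to Y'$ (via resolution of singularities applied to a projectivization of $X$ over $Y$) and combine Ehresmann's theorem for $\bar h$ with part (1) applied to the inclusion $U\hookrightarrow \bar U$, or else argue directly by a local-section-and-lifting argument: given a loop $\gamma$ in $Y'$ based at $y_0$, cover $\gamma([0,1])$ by finitely many small analytic open sets over each of which $h$ admits a holomorphic section (available by smoothness), lift $\gamma$ piecewise using these sections, and close up the lift to a loop in $U$ using path-connectedness of the fiber $h^{-1}(y_0)$. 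Either route should establish the surjectivity of $h_*$ and complete the proof.
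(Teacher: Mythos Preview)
The paper does not prove this proposition at all: it is quoted as a black box from \cite[Proposition 2.10]{Kollar} and used without argument. So there is no ``paper's proof'' to compare against, and your sketch should be judged on its own.

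Your argument for (1) is the standard one and is fine.

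For (2) there are two small but genuine gaps in the reduction step. First, you invoke part (1) for the inclusion $U\hookrightarrow X$, but (1) assumes the ambient variety is normal, and $X$ is only assumed irreducible. In fact you do not need this: once you know $(f|_U)_*\pi_1(U)$ has finite index in $\pi_1(V)$ and $\pi_1(V)\to\pi_1(Y)$ is onto (part (1) applied to the normal $Y$), the image of $\pi_1(U)$ in $\pi_1(Y)$ already has finite index and is contained in $f_*\pi_1(X)$, so you can simply drop the appeal to (1) for $X$. Second, generic smoothness in the form ``$f^{-1}(V)\to V$ is smooth for some open $V\subset Y$'' requires $X$ to be smooth; for singular $X$ the map may fail to be smooth over every point. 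The clean fix is to first replace $X$ by a resolution $\widetilde X\to X$: this is proper and surjective, hence $\widetilde X\to Y$ is still surjective, and $f_*\pi_1(X)$ contains the image of $\pi_1(\widetilde X)$, so it suffices to treat smooth $X$. After this reduction your Stein factorization argument goes through, and both of your proposed routes to the surjectivity of $h_*$ (compactify and use Ehresmann plus part (1), or lift loops via local sections and connect endpoints within a connected fiber) are viable.
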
}

{\begin{proof}[Proof of Corollary \ref{C:fix}]
The statement gives the existence of a vector $(v_1, v_2) \in T_{(X,\omega)}(\cM)$ with $v_1\notin \ker(p)$ but $v_2\in \ker(p)$.

Recall that $p(T(\cM_1))$ has totally irreducible monodromy. (A group of matrices is called totally irreducible if every finite index subgroup is irreducible.)  This is implied by Theorem \ref{T:simplicity}, and implicitly applies for to monodromy over the smooth variety whose immersed image is $\cM_1$. (The proof does not use monodromy of loops that jump between branches of $\cM_1$ along the self-crossing locus.) Recall that smooth varieties are in particular normal. 

We claim that Proposition \ref{P:agtop} thus implies that there is a loop in $\cM$ with monodromy not fixing $p(v_1)$. Indeed, using Proposition \ref{P:agtop}, we may delete Zariski closed subsets $Z, Z_1$ of $\cM$ and $\cM_1$ so that $\cM\setminus Z, \cM_1\setminus Z_1$ are smooth, and there is a surjective map $\cM\setminus Z\to \cM_1\setminus Z_1$. Proposition \ref{P:agtop} (2) gives that the image of $\pi_1(\cM\setminus Z)$ is finite index in $\pi_1(\cM_1\setminus Z_1)$, and Proposition \ref{P:agtop} (2)  and the totally irreducibility gives that this finite index subgroup does not fix $p(v_1)$. 

Recall that $\ker(p)$ is a trivial bundle on a finite cover of the stratum, see for example \cite[Lemma 2.3]{LNW}. Modifying the previous argument to use finite covers (or more concretely lifts to strata with zeros labelled), we may obtain a loop in $\cM$ whose monodromy doesn't fix $p(v_1)$ but does fix $v_2$. Subtracting off the image of this monodromy applied to $(v_1, v_2)$, we might as well assume $v_2=0$. 

Let $K$ denote the flat vector subbundle over $T(\cM)$ whose fiber at $(X,\omega)=((X_1, \omega_1), (X_2, \omega_2))$ is the kernel of the projection   $T_{(X,\omega)}(\cM)\to T_{(X_2,\omega_2)}(\cM_2)$. This is a priori a vector bundle on $\cM$, but because it is locally constant and each fiber of the projection has only finitely many components, it gives a vector bundle $K'$ defined over a finite cover of $\cM_1$. The existence of $(v_1, 0)$ shows that $K'$ is non-zero, so the previous theorem gives that $K'$ is all of $T(\cM_1)$ (lifted to the finite cover), which gives the result.
\end{proof}}

\begin{proof}[Proof of Theorem \ref{T:prime}, second statement]
Suppose that $$\cN\subset \cH_1\times \cdots\times \cH_k$$ is a counterexample. Assume $k$ is as small as possible. 

Let $\pi_i\colon \cN\to \cN_i$ be the projection onto the $i$-th factor, and let $\pi_{\hat{i}}$ be the projection away from the $i$-th factor, so $\pi_{\hat{i}}(\cN) \subset \prod_{j \neq i} \cH_j$. Let 
$$(X,\omega)=((X_1, \omega_1), \ldots,  (X_k, \omega_k)) \in \cM,$$
and let $E_i$ denote the kernel of the projection   $$p(T_{(X,\omega)}(\cM))\to p(T_{(X_i,\omega_i)}(\cM_i)).$$

Our goal is precisely to prove that each $E_i$ is zero. If $k=2$, this follows directly from Corollary \ref{C:fix}, since otherwise we get $\cM=\cM_1\times \cM_2$, contradicting that $\cM$ is prime. 

So assume $k>2$. (The following argument also works for $k=2$, but is rather degenerate and overcomplicated in this case.) Without loss of generality, assume that $E_1$ is not zero. By minimality of $k$, we can assume that $$\pi_{\hat{1}} \cN = \cL_1\times \cdots \times \cL_s$$ is a product of prime invariant subvarieties satisfying the second statement of Theorem \ref{T:prime}. Here $s\leq k-1$, and it might be that $s=1$. 

We now claim that for some $i$, $E_1$ contains a vector that is non-zero in the $i$-th component of 
$$p(T_{(X,\omega)}(\pi_{\hat{1}} \cN)) = p(T_{(Y_1,\eta_1)}(\cL_1))\oplus \cdots \oplus p(T_{(Y_s,\eta_s)}(\cL_s))$$
but zero in all the other components, where here each $(Y_i, \eta_i)\in \cL_i$. We will show that this is true for any $i$ for which there exists $v\in E_1$ whose projection to the $p(T\cL_i)$ summand is non-zero.  Indeed, given such a $v$, there is a loop in $\cL_i$ whose monodromy does not fix $v$.  We can lift this loop to a loop in $\cL_1\times \cdots \times \cL_s$ by keeping all of the other coordinates constant. Considering $v$ minus its image under monodromy proves the claim. 

Now, Corollary \ref{C:fix} gives that $\cM$ is not prime, which is a contradiction. (The $\cM_1$ in that corollary corresponds to $\cL_i$ here.) 
\end{proof}

We now turn to the third statement. 

\begin{proof}[Proof of Theorem \ref{T:prime}, third statement]
There is an isomorphism from $p(T\cN)$ to $p(T\cN_i)$ for each $i$. Since by definition rank is half the dimension of fibers of these bundles, this gives the result. 
\end{proof}

\begin{ex}
Consider the locus $\cN\subset \cH(2)$ of pairs $$((X, \omega), (X, \sqrt{2}\omega))$$
for all $(X,\omega)\in\cH(2)$. This $\cN$ is locally cut out by linear equations in $\cQ[\sqrt{2}]$, but both of its projections are a full stratum. This shows that the field $\bk(\cN)$ defined in \cite{Wfield} does not have to be equal to the fields $\bk(\cN_i)$ of the projections $\cN_i$. It is however of course true that $\bk(\cN_i)\subset \bk(\cN)$. 
\end{ex}

For the fourth statement, recall the decomposition 
$$H^1 = \bigoplus p(T\cM)_\tau \oplus \bW,$$
where the sum is over the Galois conjugates of $p(T\cM)$, and $\bW$ is the remaining part of the bundle $H^1$ of absolute cohomology. Filip showed that this direct sum decomposition is compatible with the Hodge decomposition $H^1(X,\bC)=H^{1,0}(X)\oplus H^{0,1}(X)$ \cite{Fi1}. For example, if we set $p(T_{(X,\omega)}\cM)_\tau^{1,0}=p(T_{(X,\omega)}\cM)_\tau \cap H^{1,0}(X)$ etc, we have 
$$p(T_{(X,\omega)}\cM)_\tau = p(T_{(X,\omega)}\cM)_\tau^{1,0} \oplus p(T_{(X,\omega)}\cM)_\tau^{0,1}.$$

Filip proved his result for single component surfaces, and the difficulty in that context is that he does not assume the $GL^{+}(2,\bR)$ orbit closure is a subvariety. Prior to Filip, a similar result was known for any variation of Hodge structure over a quasi-projective base. We now explain  how this result applies to invariant sub-varieties of multi-component surfaces to give the same decomposition. 

\begin{lem}
When $\cM$ is a prime invariant subvariety of multi-component surfaces, there is a decomposition $H^1 = \bigoplus p(T\cM)_\tau \oplus \bW$ compatible with the Hodge decomposition. 
\end{lem}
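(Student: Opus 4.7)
The plan is to reduce to Deligne's classical theory of polarized variations of Hodge structure (VHS) over a smooth quasi-projective base, which the paragraph preceding the statement singles out as the available tool. Since $\cM$ is by hypothesis an invariant subvariety, it is quasi-projective; after passing to a smooth open subset $\cM^\circ \subset \cM$ whose complement has complex codimension at least one, any flat decomposition of $H^1|_{\cM^\circ}$ will extend uniquely across the singular locus. Over $\cM^\circ$ the absolute cohomology bundle $H^1$, pulled back from the product of the Hodge bundles over the ambient moduli spaces of the individual factor strata, is naturally a polarized $\bZ$-variation of Hodge structure, with polarization equal to the orthogonal sum of the intersection forms on each component.

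Next I would verify that $p(T\cM)$ is a flat subbundle of $H^1$ over $\cM^\circ$. This follows because $\cM$ is locally cut out in period coordinates by linear equations, so its tangent space $T\cM$, and hence its image $p(T\cM)$ in absolute cohomology, is preserved by the monodromy representation. Moreover, following \cite{Wfield}, these equations may be taken with coefficients in a number field $\bk = \bk(\cM)$, so $p(T\cM)$ is a flat $\bk$-subbundle of $H^1\otimes \bk$, each Galois conjugate $p(T\cM)_\tau$ is a flat subbundle of $H^1\otimes \overline{\bQ}$, and the sum $\oplus_\tau\, p(T\cM)_\tau$ is Galois-stable, hence descends to a flat rational subbundle of $H^1_\bQ$.

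The heart of the argument is to invoke Deligne's theorem that every flat subbundle of a polarized VHS over a smooth quasi-projective base is itself a polarized sub-VHS, i.e. its fibers split compatibly with the Hodge decomposition. Applied to the rational flat subbundle $\oplus_\tau\, p(T\cM)_\tau$, this gives Hodge-compatibility for the sum, and semisimplicity of polarized VHS yields the polarization-orthogonal complement $\bW$ as a complementary sub-VHS. The main technicality I expect is promoting this to Hodge-compatibility for each individual Galois summand $p(T\cM)_\tau$, rather than only their sum; this should follow by applying the same Deligne result a second time within the ambient polarized sub-VHS $\oplus_\tau\, p(T\cM)_\tau$ to the flat $\overline{\bQ}$-subbundle $p(T\cM)_\tau$, noting that the Galois action permutes Hodge $(p,q)$-components so that the Galois decomposition must refine the Hodge decomposition.
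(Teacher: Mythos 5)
Your reduction to Deligne's theory is the right starting point and matches the paper's strategy, but the step you call ``the heart of the argument'' relies on a theorem that does not exist: it is \emph{not} true that every flat subbundle of a polarized variation of Hodge structure over a smooth quasi-projective base is a sub-variation. What Deligne actually gives (\cite[Proposition 1.13]{D1}, as used in the paper) is the isotypic decomposition $H^1=\bigoplus_i S_i\otimes W_i$ with $S_i$ irreducible sub-VHS and $W_i$ constant Hodge structures; a flat subbundle has the form $\bigoplus_i S_i\otimes W_i'$ for arbitrary subspaces $W_i'\subset W_i$, and it is a sub-VHS only if each $W_i'$ is a sub-\emph{Hodge structure} of $W_i$. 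Since $p(T\cM)$ is irreducible (via Theorem \ref{T:simplicity}), it equals $S_{i_0}\otimes L_{i_0}$ for a line $L_{i_0}\subset W_{i_0}$, and the entire difficulty in the multi-component case is that $\dim W_{i_0}$ can exceed $1$ (the ``same'' local system occurring in several components), so $L_{i_0}$ need not be Hodge-compatible. This failure mode is real: if $S_{i_0}$ is unitary of pure type and $W_{i_0}$ has types $(1,0)$ and $(0,1)$, a generic line $L_{i_0}$ gives a flat (even real) subbundle that is not a sub-VHS. Your fallback of applying ``the same Deligne result a second time'' inside $\bigoplus_\tau p(T\cM)_\tau$ runs into exactly the same obstruction.

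The paper closes this gap with input that is not Hodge-theoretic. A count of Hodge types shows that one of $S_{i_0}$, $W_{i_0}$ must be pure; if $S_{i_0}$ were pure its polarization would be definite and its monodromy compact, and this is ruled out because the diagonal Teichm\"uller geodesic flow on a prime $\cM$ is uniformly hyperbolic, producing closed orbits whose monodromy has simple extreme eigenvalues $e^{\pm t}$ (Lemma \ref{L:hyperbolic} and Corollary \ref{C:EV}, which rest on the measure-theoretic results of Section \ref{SS:multidyn}). Hence $W_{i_0}$ is pure, every line in it is a sub-Hodge structure, and compatibility follows; for the Galois conjugates, non-compactness is obtained instead from unipotents supplied by the Cylinder Deformation Theorem. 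Any correct proof must supply some such non-compactness statement for the monodromy of $p(T\cM)$ and its conjugates; it cannot be extracted from quasi-projectivity of $\cM$ and flatness of the subbundle alone.
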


\begin{proof}
The decomposition follows from the semisimplicity result for monodromy discussed in the proof of Lemma \ref{L:measure}, noting that Theorem \ref{T:simplicity} implies that $p(T\cM)$, and hence also its Galois conjugates, are irreducible. 

We now show that compatibility follows from \cite{D1}; this will demand of the reader some familiarity with variations of Hodge structure. Indeed, \cite[Proposition 1.13]{D1} gives a decomposition 
$$H^1 = \bigoplus_i S_i \otimes W_i,$$
where the $S_i$ are irreducible non-isomorphic variations of Hodge structure and the $W_i$ are vector spaces with Hodge structures. 

Since $p(T\cM)$ is irreducible, it is of the form $S_{i_0} \otimes L_{i_0}$, where $L_{i_0}$ is a line in $W_{i_0}$. (Because of the multiple components, unlike the single component case, $\dim W_{i_0}>1$.) 

Suppose the Hodge decomposition of $S_{i_0}$ included Hodge types $(p, q)$ and $(p', q')$ with $p'> p$, and $W_{i_0}$ included types $(p'', q'')$ and $(p''', q''')$ with $p'' > p'''$. Then 
$$(p,q)+(p'', q''),\quad (p,q)+(p''', q'''),\quad (p', q') + (p''', q''')$$
are distinct. Since these three types must appear in the Hodge decomposition of  $S_{i_0} \otimes L_{i_0}$, which has only types $(1,0)$ and $(0,1)$, we get a contradiction. Hence, one of $S_{i_0}$ or $W_{i_0}$ has pure type, meaning that only a single term appears in its Hodge decomposition. Without loss of generality, up to shifting weights, we can assume that term is $(0,0)$. 

If $S_{i_0}$ had pure type $(0,0)$, then it would have compact monodromy, because its polarization form would be definite. But $p(T\cM)$ cannot have compact monodromy by Corollary \ref{C:EV}. Hence, $W_{i_0}$ has pure type $(0,0)$. Hence $L_{i_0}$ has pure type $(0,0)$, and hence $p(T\cM)=S_{i_0} \otimes L_{i_0}$ is compatible with the Hodge decomposition.

The same argument applies to the Galois conjugates, where non-compactness of monodromy can be established by the presence of uniponents guaranteed by Theorem \ref{T:CDT} (as in \cite[Remark 4.6]{EFW}) and Proposition \ref{P:agtop}. See also \cite{Fi3}. 

A similar argument applies to $\bW$ because it must contain all of $S_i\otimes W_i$ for any $i$ where $W_i$ does not have pure type $(0,0)$. 
\end{proof}

The natural factor of $\Jac(X)$ referenced in Theorem \ref{T:prime} can be defined as 
$$\Jac_\cM(X, \omega) = \left(\bigoplus p(T_{(X,\omega)}\cM)_\tau^{1,0}\right)^* \big/ \left(\bigoplus p(T_{(X,\omega)}\cM)_\tau\right)^*_\bZ,$$ 
where we define $\left(\bigoplus p(T_{(X,\omega)}\cM)_\tau\right)^*_\bZ$ to be the set of linear functionals on 
$\bigoplus p(T_{(X,\omega)}\cM)_\tau$  that take integer values on $H^1(X, \bZ)\cap \bigoplus p(T_{(X,\omega)}\cM)_\tau$. 

\begin{proof}[Proof of Theorem \ref{T:prime}, fourth statement]
By the second statement, we get an isomorphism from $p(T\cN)^{1,0}$ to $p(T\cN_i)^{1,0}$. Combining this with the Galois conjugate versions, we get the result. 
\end{proof}

\section{Proof of Theorem \ref{T:continuity}}\label{S:continuity}

Let $\wideparen{\HH}(\kappa)$ be the  compactification of $\cH(\kappa)$  constructed in~\cite{lms}, which is a smooth complex orbifold.\footnote{The notation $\wideparen{\HH}(\kappa)$ is different from~\cite{lms}, whose notation $\Xi\BM_{g,n}(\kappa)$  is a bit too heavy to carry over.}  Roughly speaking, $\wideparen{\HH}(\kappa)$ parameterizes twisted differentials compatible with a full level graph and with matchings of horizontal directions at the nodes, where the top level differentials are not projectivized (see also~\cite{Many} for a detailed definition of twisted differentials).  

 Note that $\wideparen{\HH}(\kappa)$ admits a continuous and surjective map to $\WHH(\kappa)$ by forgetting lower level differentials (i.e. components of area zero). To prove Theorem \ref{T:continuity}, we will first prove an analogous result for $\wideparen{\HH}(\kappa)$.  This will require of the reader some familiarity with $\wideparen{\HH}(\kappa)$, but, having established this analogous result (Theorem \ref{T:lms}), the rest of the proof of Theorem \ref{T:continuity} will be  more self-contained.

Take a boundary point $(X, \omega)$ in $\wideparen{\HH}(\kappa)$ such that $X$ has $N$ levels $0, -1, \ldots, -N+1$ and such that the subsurfaces $X^{(i)}$ in each level $i$ have no simple polar nodes (i.e. no horizontal edges in the level graph).  Consider a small ball neighborhood $U\subset \wideparen{\HH}(\kappa)$ of $X$.
Let $t_i$ be the smoothing parameter for level $i$ of $(X, \omega)$ for 
$i = -1, \ldots, -N+1$. Let $V\subset U$ be the complement of the union of hyperplanes defined by each equation $t_i = 0$.  
Said differently, $V$ parametrizes smooth translation surfaces near $(X, \omega)$.  

Consider subsets of $V$ defined by, for each $i$, prescribing the sign of the real or imaginary part of $t_i$. Thus, for each $i$, we prescribe one of the following four conditions: 
$$\Re(t_i)>0, \ \Re(t_i)<0, \ \Im(t_i)>0, \ \Im(t_i)<0.$$
There are $4^{N-1}$ such subsets of $V$, and we call them $S_j$ for $j =1, \ldots, 4^{N-1}$. 

The set  $V$ is a union of these  subsets $S_j$, and $V$ can be chosen so that the $S_j$ are connected and simply connected. Consequently, over each $S_j$ the relative homology groups of all surfaces in the universal family can be identified as one model.\footnote{The space $\wideparen{\HH}(\kappa)$ is constructed as a complex orbifold (or algebraic stack), and locally one can pass to a finite cover to have a universal family over it; see~\cite[Section 13]{lms}. Technically speaking, we should let $U$ be a ball in this finite cover to avoid orbifold issues, but we omit this distinction here.} The relative homology of $(X,\omega)$ is a quotient of the relative homology group on $S_j$. 

Take any one of the subsets $S_j$ and simply denote it by $S$. 

\begin{thm}\label{T:lms}
The period of any element of the model relative homology group gives a continuous function on $S \cup \{(X,\omega)\}$.

The same is true with $(X,\omega)$ replaced with anything with the same image in $\WHH$.
\end{thm}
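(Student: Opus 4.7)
The plan is to exploit the explicit plumbing description of the universal family over $U$ coming from~\cite{lms}. Each smooth surface in $V$ is obtained from the limiting multi-scale differential $(X,\omega)$ by inserting thin annular necks at the vertical nodes, with the level-$i$ subsurface $X^{(i)}$ carrying a differential rescaled by a prescribed scale $s_i$ that is a (possibly fractional) monomial in the $t_j$, with $s_0=1$ and $s_i\to 0$ as $(t_{-1},\ldots,t_{-N+1})\to 0$ for $i<0$. Within the sector $S$ the signs of $\Re(t_i)$ and $\Im(t_i)$ are prescribed, so any fractional powers entering the plumbing formulas have unambiguous branches, and the identification of the relative homology of every fibre over $S$ with a single model is well-defined precisely because $S$ is connected and simply connected.

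I would then pick a basis of the model $H_1(X_{\mathrm{smooth}},\Sigma)$ adapted to the level graph, consisting of three types of classes: first, classes $\alpha$ supported in the interior of a single level-$i$ subsurface $X^{(i)}$ away from the nodes; second, small vanishing cycles $\delta$ encircling each vertical node; third, cross-level classes $\beta$ that traverse one or more plumbing annuli between components at different levels. The key step is to write the period $p\mapsto \int_{\gamma(p)}\omega_p$ explicitly for each basis class. For a class $\alpha$ in level $i$, the period equals $s_i$ times a holomorphic function of the residual moduli, with limit zero whenever $i<0$. For a vanishing cycle $\delta$ at a node, the period is a unit times an appropriate monomial in the $t_j$, and hence vanishes at the boundary. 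For a cross-level $\beta$, the period decomposes as a sum of $s_i$-rescaled periods on each level the class traverses, plus plumbing-annulus corrections that are $O(t_i)$; in the limit only the top-level contribution survives, and it agrees with the period of the pushforward of $\beta$ to the limit surface. In each case the resulting formula is manifestly continuous on $S\cup\{(X,\omega)\}$, which is the first assertion.

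For the second assertion, any $(X',\omega')\in\wideparen{\HH}(\kappa)$ with the same image in $\WHH(\kappa)$ as $(X,\omega)$ differs from $(X,\omega)$ only in its lower-level (zero-area) data, namely the moduli of the collapsed components and possibly a rearrangement of isomorphic pieces, exactly as in the analysis of Lemma~\ref{L:iff}. The boundary value of each of the three period formulas above depends only on the top-level differential together with the combinatorial collapse data, and both of these coincide for $(X,\omega)$ and $(X',\omega')$. Hence the same continuous extension works verbatim for $(X',\omega')$ in place of $(X,\omega)$.

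I expect the main obstacle to be writing the plumbing period expansion for the cross-level classes carefully enough that continuity at $t_i=0$ can be read off. This requires tracking how the chosen branches of the fractional scaling parameters inside $S$ interact with the relative homology identification, and verifying that the plumbing-annulus corrections really are holomorphic multiples of a strictly positive power of $t_i$ rather than, say, logarithmic; this last point is where the absence of horizontal edges in the level graph (which was built into the hypothesis on $(X,\omega)$) does real work. Once this bookkeeping is in place, continuity reduces to the elementary fact that every period is a polynomial or branch-selected fractional-power expression in the $t_i$ with non-negative exponents, multiplied by holomorphic functions of the residual parameters.
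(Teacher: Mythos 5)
Your proposal is correct and follows essentially the same route as the paper: both decompose a model relative homology class into pieces supported in single levels plus cross-level connecting pieces, identify the period of a level-$i$ piece as the level-$i$ rescaling parameter (a monomial in the $t_j$) times a quantity that extends continuously (the paper's perturbed period coordinates $t_{\lceil i\rceil}(s)\lambda^{(i)}_k(s)$ from~\cite{lms}), and observe that the plumbing corrections for cross-level pieces tend to zero, with the no-horizontal-edges hypothesis ruling out logarithmic terms. The second assertion is handled identically, since the limiting periods depend only on the top-level data.
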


\begin{proof}
Let $(X_s, \omega_s)$ be a path in $S$ that converges to $(X,\omega)$ as $s\to 0$.

In the model relative homology group over $S$, choose homology classes $\gamma^{(i)}_{k}$ in each level $i$ subsurface $X^{(i)}$ for $k = 1, \ldots, j_i$ such that their collection forms a basis of the relative homology group $H_1(X^{(i)}\setminus P^{(i)}, Z^{(i)}, \bbZ)$, where $P^{(i)}$ is the set of polar nodes in $X^{(i)}$ and $Z^{(i)}$ is the set of non-polar nodes
together with marked zeros in $X^{(i)}$. Here a node is called polar if it is a pole of the (twisted) differential on $X^{(i)}$, and otherwise it is called non-polar. 

We may define a version of $\gamma^{(i)}_{k}$ even after plumbing (i.e. for $s \neq 0$) by, for those paths $\gamma^{(i)}_{k}$ that end at a non-polar node, defining a ``base point" near each such node whose position depends $s$ and goes continuously to the node as $s\to 0$. We then define  $\gamma^{(i)}_{k}$ when $s\neq 0$ by, roughly speaking, moving its endpoints at non-polar nodes to the nearby base points, so  $\gamma^{(i)}_{k}$ varies slightly with $s$ near the node and ends at the node for $s = 0$. 
 This corresponds to the so-called perturbed period coordinates constructed in~\cite[Section 11.2]{lms} and illustrated in \cite[Example 11.8 and Figure 6]{lms}. 
 
  Reordering if necessary, we can also assume that $\int_{\gamma^{(i)}_1}\omega_s \neq 0$ for all $i$ and all $s\neq 0$.
  
Define
$$\lambda^{(i)}_{k}(s) = \frac{\int_{\gamma^{(i)}_{k}} \omega_s}{\int_{\gamma^{(i)}_1}\omega_s}, $$
which measures the relative ratios of level-$i$ periods in the degeneration to $X$ and which is well-defined also at $s=0$.  
Since there are no simple polar nodes, a local coordinate system of $\wideparen{\HH}(\kappa)$ at $(X, \omega)$ restricted to this family is given by  
$$\big(\{t_{i}(s)\}_{i=0}^{-N+1}; \{\lambda^{(i)}_{k}(s)\}_{k=2}^{j_i}\big).$$
Denote by $t_{\lceil i \rceil} (s)= t_{0}(s)t_{-1}(s)\cdots t_{i}(s)$.\footnote{Technically speaking, if there are polar nodes of pole order bigger than two, then the $t_i$ coordinates encode some extra finite data of matching horizontal directions when plumbing at such nodes.  In that case one should use a suitable power $t_j^{a_j}$ instead of $t_j$ in the product $t_{\lceil i \rceil}$, where the exponents $a_j$ are determined by the level graph.  Since this is only a matter of notation and does not affect the rest argument, we omit the distinction.}   The construction of perturbed period coordinates gives that we may arrange for $t_{\lceil i \rceil}(s)$ to be equal to $\int_{\gamma^{(i)}_{1}}\omega_s$ (see \cite[Equation (11.8)]{lms} and the paragraph preceding it, noting that therein one period in each level is intentionally omitted and can be used as $t_{\lceil i \rceil}$).

For an element $\gamma$ in the model relative homology group, write it as the following linear combination 
$$ \gamma = \sum_{i,k} c_k^{(i)} \gamma^{(i)}_{k} + \sum_{h,l} c^{(h, l)} \rho^{(h, l)}$$
where the $c_k^{(i)}$ and $c^{(h, l)}$ are constant coefficients and where the $\rho^{(h,l)}$ are parts of (a path representative of) $\gamma$ that connect level $h$ (from a base point in that level) to an adjacent level $l$ for $h > l$.  In other words, we decompose $\gamma$ into paths that are contained in a single level together with paths that cross two different levels through the corresponding (plumbed) node.  Then the period of $(X_s, \omega_s)$ along $\gamma$ is 
$$ \lambda (s) = \sum_{k,i} c_k^{(i)} t_{\lceil i \rceil}(s) \lambda^{(i)}_{k}(s) + \sum_{h,l} c^{(h,l)} r^{(h,l)}(s) $$
 where each $r^{(h,l)}(s)$ is a function that depends on $t_i(s)$ for $i$ between $h$ and $l$, and on the choice of base points in the definition of perturbed period coordinates (see~\cite[Example 11.8]{lms} for an illustration). 
  Despite these choices, the functions $r^{(h,l)}(s)$ vary continuously and converge to zero as $s\to 0$, which follows from the construction of $\wideparen{\HH}(\kappa)$. In summary, all the quantities involved in the above expression of the period $\lambda(s)$ vary continuously and converge as $s\to 0$. We thus conclude that the same holds for  $\lambda (s)$. 
 
 The final claim follows from the same analysis.
 \end{proof}

Now we can finish the proof of Theorem~\ref{T:continuity}.  

\begin{proof}[Proof of Theorem~\ref{T:continuity}]
Denote by $W$ the  preimage of $(X_\infty, \omega_\infty)$ in $\wideparen{\HH}(\kappa)$. As discussed in Section~\ref{S:compactification}, $W$ is compact. So is the subset $W_M \subset W$  where there are no simple poles and no cylinders of modulus bigger than a large constant $M$. Hence we can find finitely many $(X^{(k)}, \omega^{(k)})\in W_M$ whose ball neighborhoods $U_k$ associated with Theorem \ref{T:lms} cover a neighbourhood of $W_M$. We will use all of the simply connected subsets (previously denoted $S_j$ and then by $S$) associated to all the $(X^{(k)}, \omega^{(k)})$. 

By their construction, each such set $S$ is simply connected and the universal family over $S$ has a model homology group for its fibers, thus satisfying (1) and (2).

We now claim that (3) follows from Theorem~\ref{T:lms}. Otherwise, for some $\e>0$ and some element of the model relative homology group, there would be a sequence $(X_n, \omega_n)$ in one of the sets $S$, such that this sequence converges to $(X_\infty, \omega_\infty)$ and such that the value of the corresponding period at each $(X_n, \omega_n)$ is at least $\e$ different from the value of the corresponding period at $(X,\omega)$. Passing to a subsequence if necessary, we can assume that that $(X_n, \omega_n)$ converges in $ \wideparen{\HH}(\kappa)$,  contradicting Theorem~\ref{T:lms}. 

To see (4), suppose otherwise that there are surfaces $(X_n, \omega_n)$ without cylinders of modulus greater than $M$ converging to $(X,\omega)$ but not contained in any of the sets $S$ above. Passing to a subsequence, we can assume that they converge in $\wideparen{\HH}(\kappa)$. Since the limit must be in $W_M$, this contradicts the fact that the union of the $U_k$  cover a neighbourhood of $W_M$.

Finally, (5) follows from the fact that the intersection of $S_j$ with the smooth locus of $\cM$ is a relatively compact semi-analytic set (see e.g.~\cite[Corollary 2.7, Lemma 3.4]{BM88}).
\end{proof}

\bibliography{mybib}{}
\bibliographystyle{amsalpha}
\end{document}